\documentclass{jgcc}

\pdfoutput=1
\usepackage{lastpage}
\jgccdoi{18}{1}{3}{17585}
\jgccheading{}{\pageref{LastPage}}{}{}{Feb.~26,~2026}{Apr.~1,~2026}{}

\usepackage[utf8]{inputenc}
\usepackage{ifthen}

\usepackage{xspace}
\usepackage{graphicx}

\usepackage{multirow}

\usepackage{tcolorbox}
\usepackage{dsfont}
\usepackage{textcomp}
\usepackage{bold-extra}
\usepackage{verbatim,wrapfig}

\usepackage{amsmath,amssymb,amsthm}
\usepackage{mathtools}
\usepackage{bm}
\usepackage{MnSymbol}

\usepackage{tikz}
\usetikzlibrary{shapes}
\usetikzlibrary{arrows,automata,positioning}
\usetikzlibrary{decorations.pathreplacing}

\newcommand{\Sol}{\mathrm{Sol}}

\newcommand{\vdmatrix}[4]{\left(\begin{smallmatrix}#1 & #2\\ #3 & #4\end{smallmatrix}\right)}

\newcommand\fsqrtp{finite square roots property\xspace}

\renewcommand{\ast}{*}
\renewcommand{\colon}{:}

\renewcommand{\iff}{\mathrel{\Leftrightarrow}}

\DeclareMathOperator{\nf}{\mathrm{nf}}
\DeclareMathOperator{\nfslex}{\mathrm{nf}_{{\mathrm{slex}}}}

\newcommand\fg{f.g.\xspace}

\newcommand{\wrt}{wrt.\xspace}

\newcommand{\lds}{, \ldots ,}

\newcommand{\GP}{\mathop{\text{GP}}}

\newcommand{\Mati}{Matiyasevich\xspace}

\newcommand{\IFF}{if and only if\xspace}
\renewcommand{\hom}{homo\-mor\-phism\xspace}

\newcommand{\iso}{isomor\-phism\xspace}

\newcommand{\epi}{epimor\-phism\xspace}
\newcommand{\epis}{epimor\-phisms\xspace}

\newcommand{\mazu}{Mazurkiewicz\xspace}

\DeclareMathOperator{\pos}{\mathrm{pos}}

\newcommand{\tra}{transition\xspace}

\newcommand{\eg}{e.g.\xspace}

\newcommand{\Ip}{In parti\-cu\-lar,\xspace}

\newcommand{\invol}{involution\xspace}

\newcommand{\solu}{solu\-tion\xspace}
\newcommand{\solus}{solu\-tions\xspace}

\usepackage{prettyref}
\newcommand{\prref}[1]{\prettyref{#1}}
\newrefformat{thm}{Theorem~\ref{#1}}
\newrefformat{lem}{Lemma~\ref{#1}}
\newrefformat{def}{Definition~\ref{#1}}
\newrefformat{claim}{Claim~\ref{#1}}
\newrefformat{conj}{Conjecture~\ref{#1}}
\newrefformat{ques}{Question~\ref{#1}}
\newrefformat{cor}{Corollary~\ref{#1}}
\newrefformat{inva}{Invariant~\ref{#1}}
\newrefformat{prop}{Proposition~\ref{#1}}
\newrefformat{sec}{Section~\ref{#1}}
\newrefformat{kap}{Chapter~\ref{#1}}
\newrefformat{ex}{Example~\ref{#1}}
\newrefformat{eq}{Equation~(\ref{#1})}
\newrefformat{rem}{Remark~\ref{#1}}
\newrefformat{fig}{Figure~\ref{#1}}
\newrefformat{par}{Paragraph~\ref{#1}}
\newrefformat{obs}{Observation~\ref{#1}}
\newrefformat{pro}{Property~\ref{#1}}
\newrefformat{pm}{Problem~\ref{#1}}
\newrefformat{ruli}{Rule~\ref{#1}}
\newrefformat{ruls}{Rules~\ref{#1}}

\newcommand{\arc}[1]{\overset{#1}\ra}

\newcommand{\set}[2]{\{#1\mid #2\}}

\newcommand{\os}[1]{\{#1\}}
\newcommand{\sm}{\setminus}
\newcommand{\es}{\emptyset}
\newcommand{\sse}{\subseteq}

\newcommand{\abs}[1]{|{#1}|}
\newcommand{\Abs}[1]{\Vert{#1}\Vert}

\newcommand{\M}{\ensuremath{\mathbb{M}}}

\newcommand{\N}{\ensuremath{\mathbb{N}}}
\newcommand{\Z}{\ensuremath{\mathbb{Z}}}
\newcommand{\Q}{\ensuremath{\mathbb{Q}}}

\newcommand{\G}{\ensuremath{\mathbb{G}}}

\newcommand{\id}{{\mathrm{id}}}

\renewcommand{\phi}{\varphi}
\newcommand{\eps}{\varepsilon}

\newcommand{\alp}{\alpha}

\newcommand{\gam}{\gamma}
\newcommand{\del}{\delta}
\newcommand{\lam}{\lambda}

\newcommand{\sig}{\sigma}
\newcommand{\Sig}{\Sigma}
\newcommand{\Gam}{\Gamma}

\newcommand{\Del}{\Delta}

\newcommand{\cA}{\mathcal{A}}

\newcommand{\cG}{\mathcal{G}}

\newcommand{\cL}{\mathcal{L}}

\newcommand{\cS}{\mathcal{S}}

\newcommand{\cX}{\mathcal{X}}

\newcommand{\ra}{\rightarrow}

\newcommand{\ov}[1]{\overline{#1}}
\newcommand{\oi}[1]{{#1}^{-1}}

\newcommand{\Rat}{\mathop{\mathrm{Rat}}}
\newcommand{\Rec}{\mathop{\mathrm{Rec}}}
\newcommand{\Reg}{\mathop{\mathrm{Reg}}}

\newcommand{\nflex}{\mathop{\mathrm{nf_{\mathrm{lex}}}}}

\newcommand{\nfM}{\mathrm{nf}_{\M}}

\newcommand{\HD}{Hasse diagram\xspace}
\newcommand{\HA}{Hasse arc\xspace}

\newcommand{\MGI}{\ensuremath{{M}(\Gam,I)}} 
\newcommand{\MSI}{\ensuremath{M(\Sig,I)}} 

\newcommand{\MJI}{\ensuremath{M(J,I)}}
\newcommand{\GGI}{\ensuremath{{G}(\Gam,I)}}

\newcommand\CoS[1]{{#1}^{\text{co-}*}}
\newcommand{\BS}{\ensuremath{\mathop\mathrm{BS}}}
\newcommand{\BSG}{Baumslag-Solitar group\xspace}

\DeclareMathOperator{\ts}{ts}
\DeclareMathOperator{\TO}{TO}
\DeclareMathOperator{\Pref}{Pref}
\DeclareMathOperator{\Suf}{Suf}

\begin{document}

\title[Quadratic equations, graph products, and the exponent of periodicity]{Quadratic equations in graph products of groups\\ and the exponent of periodicity}

\author[V.~Diekert]{Volker Diekert\,\orcidlink{0000-0002-5994-3762}}
\address{University of Stuttgart, Germany}
\email{diekert@fmi.uni-stuttgart.de}

\author[S.~Natterer]{Silas Natterer\,\orcidlink{0009-0004-6524-0854}}
\address{University of Stuttgart, Germany}
\email{natterersilas@gmail.com}

\author[A.~Thumm]{Alexander Thumm\,\orcidlink{0009-0005-4240-2045}}
\address{University of Siegen, Germany}
\email{alexander.thumm@uni-siegen.de}

\thanks{\textit{2020~Mathematics Subject Classification.} Primary 20F70; Secondary 68Q45.}

\keywords{Quadratic equations, graph products, exponent of periodicity.}

\begin{abstract}
In 1977, Makanin established the decidability of equations in free monoids. 
A key ingredient in his proof is the \emph{exponent of periodicity}: for a word $w$, it is the largest exponent $e$ such that $w$ contains a nonempty factor of the form $p^e$.
Makanin showed the following for a system of equations in free monoids: 
if the system has a solution with a sufficiently large exponent of periodicity, then it has infinitely many solutions.
However, the converse~--~whether the existence of infinitely many solutions implies the existence of solutions with arbitrarily large exponent of periodicity~--~remains open.

In this paper, we investigate the analogous problem for quadratic equations in finitely generated groups. 
We use normal forms to define the exponent of periodicity. 
We then identify structural conditions on groups and their normal forms that guarantee that infinite solution sets of quadratic systems have an unbounded exponent of periodicity. 
We prove that these conditions are preserved under graph products and, in particular, hold for all finitely generated right-angled Artin groups. 
In addition, we show that they also hold for finitely generated (graph products of) torsion-free nilpotent and hyperbolic groups, and we characterize the Baumslag-Solitar groups satisfying them.
\end{abstract}

\maketitle

\hfill{{\small \it Dedicated to Alexei Miasnikov}}\par
\hfill{{\small \it on the occasion of his birthday.}}

\section{Introduction}\label{sec:intro}

All formal definitions and results mentioned in the introduction are given or repeated in the main body of the paper. Various statements in the main body are formulated with respect to constraints. For better reading, constraints are not discussed in this introductory section. 

\subsection{A Brief History of Word Equations}\label{sec:hawkin}
We begin this excursion not with Diophantus of Alexandria, but more than~1700 years later.\footnote{Ceci n'est pas une publicit\'e pour \cite{Diekert2015CAI} comme l'aurait dit Ren\'e Magritte (1898--1967).} 
In the mid-1960s, it was known that the satisfiability problem for word equations in free semigroups reduces to Hilbert's Tenth Problem.
But this track was not used when \Mati~\cite{Matijasevic70} showed in 1970 that Hilbert's Tenth Problem is undecidable, using previous results by Robinson, Davis, and Putnam.\footnote{For a brief history of Hilbert's Tenth Problem, we refer to \cite{MMatiyasevichOPSW}.} 

The breakthrough regarding the satisfiability problem for word equations in free monoids and semigroups is due to Makanin. He showed in 1977 in his seminal paper \cite{mak77} that the existential theory in free semigroups is indeed decidable. In Makanin's proof (of about 80 dense pages), the notion of the \emph{exponent of periodicity} plays a central role. The exponent of periodicity $\exp(w)$ of a word~$w$ is the largest natural number~$e$ such that we can write $w=up^{e}v$ where $p$ is nonempty.\footnote{This implies that $p$ is a primitive word.} 
For a set $L$ of words, the exponent of periodicity, denoted by $\exp(L)$, is defined as the supremum $\sup\set{\exp(w)}{w \in L} \in \N \cup \os{\infty}$. 

The interest here is in the exponent of periodicity of the solution set $\Sol(\cS)$ of a given finite system of word equations $\cS$.
Let $\Gam$ and $\cX$ be finite sets of \emph{constants} and \emph{variables}, respectively.
Then $\cS$ is a set of equations $U=V$ where $U, V\in (\Gam\cup \cX)^*$, 
and a \emph{\solu} of $\cS$ is given by a mapping $\sigma:\mathcal{X}\to \Gamma^*$ such that substituting every occurrence of every $X \in \cX$ by its corresponding image $\sigma(X)\in \Gamma^*$ yields an equality $\sigma(U)=\sigma(V)\in \Gamma^*$ for all $(U=V)\in \cS$.
We then define the exponent of periodicity of a solution $\sigma \in \Sol(\cS)$ and of the system of word equations itself by $\exp(\sigma) = \exp(\sigma(\cX))$ and $\exp(\cS) = \sup\set{\exp(\sigma)}{\sigma \in \Sol(\cS)}$, respectively.

In his paper, Makanin crucially used a result of Hmelevski\u{\i} \cite{hme76} that the exponent of periodicity $\exp(\sigma)$ of a shortest \solu $\sigma:\mathcal{X}\to \Gamma^*$ of a finite system word equation $\cS$ is bounded by a computable function $f$ in the input size. 
Moreover, if there is any $\sigma' \in \Sol(\cS)$ with a higher exponent than $f(n)$, where $n$ is the input size of $\cS$, then there are infinitely many \solus and, in addition, we have $\exp(\cS)=\infty$. The amazing fact is that we do not know whether the converse holds.

\begin{prob}\label{pm:hugo}
Does the existence of infinitely many 
solutions for a finite system of word equations $\cS$ in a free semigroup imply $\exp(\cS)=\infty$?
\end{prob}

We believe that \prref{pm:hugo} has a positive answer, but so far we have only been able to prove this conjecture in restricted cases. The state of the art (as it is known to the authors to date) is reported in our paper \cite{DiekertNT2026arxiv}, which provides a positive result for quadratic equations (with certain regular constraints) and for equations with at most two variables. 

The support for our conjecture (that Problem~\ref{pm:hugo} has a positive answer) is based on the fact that Makanin's algorithm, which decides whether $U=V$ is solvable, can be modified to decide whether there are infinitely many solution. This has been shown by Plandowski and Schubert \cite{PlandowskiS2018tcs}. 
The reduction is simple, so we briefly explain it.

Consider any finite system of word equations~$\cS$.
For satisfiability, that is deciding the existence of a \solu, it is enough to consider a shortest \solu. 
If its exponent of periodicity is sufficiently large, then, by  \cite{hme76}, a shortest \solu generates for every $n\in \N$ another \solu
having an exponent of periodicity larger than~$n$.  

However, for deciding whether~$\cS$ has infinitely many \solu, it does not suffice to consider just shortest \solu{s}.
Let $n=\Abs{\cS}$ be the input size of the system.\footnote{Say, $\Abs{\cS}=1 +|\Gam| + |\cX|+\sum_{(U=V)\in \cS} |UV|$.}
We need an effective upper bound (in terms of~$n$) on the maximal exponent of periodicity $\exp(\sigma)$ for a solution $\sigma$ minimizing the total length $\sum_{(U=V) \in \cS}|\sigma(UV)|$. Given such a computable upper bound $n\mapsto f(n)$, we can decide whether there are infinitely many solutions as follows: 
It is known (see \eg \cite[p.~432]{die98lothaire}) that Makanin's algorithm produces for every $N$, a finite directed \emph{search graph} $\mathcal{M}(\cS, N)$ describing all solutions in the set
\begin{equation}\label{eq:expUVN}
  \mathrm{Sol}(\cS, N) = \{ \sigma \in \Sol(\cS) \mid \exp(\sigma) \leq f(N) \}.
\end{equation}
Moreover, $\mathcal{M}(\cS, f(\Abs{\cS}))$ has the property that if it contains a nontrivial strongly connected component, then  $|\mathrm{Sol}(\cS)| = \infty$. 
Now, for any upper bound $N$ on $f(\Abs{\cS}))$ it holds that as soon as there is a \solu $\sig$ such that 
$\exp(\sig) \geq N$, then $|\mathrm{Sol}(\cS)| = \infty$. 
To reduce the problem, 
we guess a constant~$a\in \Gam$ and a variable $X\in \cX$.
Then, following Plandowski and Schubert, we introduce three fresh variables $P,Z,Q$ and add an additional equation
$X = P(aZ)^{N}Q$. 
This yields a system $\cS_N$ of word equations, and we can decide whether there is a \solu using  Makanin's algorithm. 
If $\mathrm{Sol}(\cS_N)\neq \es$, then $|\mathrm{Sol}(\cS)| = \infty$.

If $\cS_N$ has no \solu, then we construct the search graph $\mathcal{M}(\cS, N)$ and check whether it contains a nontrivial strongly connected component.
If so, then $|\mathrm{Sol}(\cS)| = \infty$ and if not, then $|\mathrm{Sol}(\cS)| < \infty$.
This gives an effective decision procedure to decide whether $\mathrm{Sol}(\cS)$ has infinitely many \solus.\footnote{The paper 
\cite{PlandowskiS2018tcs} yields a polynomial-time reduction of the finiteness problem for $\Sol(\cS)$ to the satisfiability problem for a finite system of word equations.
Firstly, they use that $N$ can be chosen to be singly exponential in the input size and, secondly, they use fast exponentiation via additional variables.}
The remaining obstacle to resolving \prref{pm:hugo} is that we cannot yet exclude that $\cS$ has infinitely many solutions $\sigma$, while $\exp(\sigma)$ remains bounded.\footnote{There currently seem to be no ideas on how to construct such an example, and another conjecture suggests that AI-based computer searches will not be helpful in finding one.}   

In this paper, we focus on equations over torsion-free groups. Makanin showed in \cite{mak83, mak84} that both, the existential theory of equations and the positive theory in free groups are decidable. In this setting, let $F=F(\Gamma_+)$ be a free group over a finite basis $\Gamma_+ \subseteq F$. Thus $\Gamma = \Gamma_+ \cup \Gamma_+^{-1}$ is a set with an involution $a \mapsto \bar a = \oi a$.
Likewise, the set of variables~$\mathcal{X}$ comes with an involution $X \mapsto \bar{X}$. Both involutions are without fixed points, and we only allow mappings $\sigma : \mathcal{X} \to F$ that respect the involution: we require $\sigma(\bar{X}) = \overline{\sigma(X)}$ for all variables. 

To solve an equation $W=1$ in a free group, the immediate idea is to use the fact that there is a canonical bijection between the group and the regular set of reduced words $\mathrm{NF} \subseteq \Gamma^*$ (words without any factor $a\bar{a}$ for $a \in \Gamma$). We represent an element in $F$ by its unique reduced normal form in $\mathrm{NF}$. More precisely, for $g \in F$, we let $\mathrm{nf}(g) \in \mathrm{NF}$ be the unique word such that $\pi(\mathrm{nf}(g)) = g$, where $\pi : \Gamma^* \to F$ is the canonical epimorphism.

This enables a reduction to the existential theory of word equations $U=V$ in the free monoid with involution $(\Gamma \cup \mathcal{X})^*$, under the constraint that a solution $\sigma$ maps variables to reduced words. Crucially, in Makanin's proof, the result regarding the exponent of periodicity can be extended to free monoids with involution. If $\sigma$ is a shortest solution of an equation, then $\exp(\sigma)$ can again be bounded by a singly exponential function. 

Remarkably, the result also holds in the opposite direction: if a system $\cS$ of equations has infinitely many solutions $\sigma : \mathcal{X} \to F$, then $\exp(\cS) = \infty$ where 
\[
  \exp(\cS) = \sup \{ \exp(\mathrm{nf}(\sigma(X))) \mid X \in \mathcal{X} \wedge \sigma \text{ solves } \cS \text{ in } F \}.
\]

According to personal communication with the authors, this positive result was established during the work on the positive solution of Tarski’s conjectures by Kharlampovich and Miasnikov~\cite{KMIV06}, as well as Sela~\cite{sela13}. 
Thus, the answer to Problem~\ref{pm:hugo} is `yes' for free groups;\footnote{This may be viewed as further support for our conjecture in the free monoid case.} however, an explicit statement of this specific form was not provided in their original papers. 

There is another (much simpler) class of groups where we know that solution sets for systems with infinitely many solutions have an infinite exponent of periodicity (with respect to their usual normal forms): the class of finitely generated free Abelian groups.
This raises the natural question of what whether this is also true for classes `in between'.

The most well-studied class sitting between free groups and free Abelian groups consists of (torsion-free) free partially commutative groups, which are also called right-angled Artin groups. 
It is known from~\cite{dm06} that the existential theory of these groups is decidable.
This is achieved by a reduction to the existential theory of free partially commutative monoids with involution.
The decidability of the existential theory of these monoids, without involution but with recognizable constraints, is due to Matiyasevich~\cite{mat97lfcs}; see also \cite{dmm99tcs}.
To handle involutions, it was necessary to find a normal form compatible with the involution. 
No such normal form existed in the literature before~\cite{dm06}.\footnote{The normal form's name was not Susan; so the authors of~\cite{dm06}  were not \emph{desperately seeking} it.}

For the purpose of the introduction, let $\Gamma$ be a finite set with an involution $a \mapsto \bar{a}$; that is, with $\bar{\bar{a}} = a$ for all $a \in \Gamma$.
This involution is extended to $\Gamma^*$ by stipulating $\overline{uv} = \bar{v}\bar{u}$ for all $u, v \in \Gamma^*$.
Moreover, let $I \subseteq \Gamma \times \Gamma$ be an irreflexive symmetric relation with $(a, b) \in I \iff (b, \bar{a}) \in I$. 
We then define the \emph{free partially commutative monoid} $M(\Gamma, I)$ with involution and the  corresponding \emph{free partially commutative group} $G(\Gamma, I)$ as 
\begin{align} \label{eq:MGamI}
  M(\Gamma, I) = \Gamma^* / \{ ab = ba \mid (a, b) \in I \}&
  \text{ and } 
  G(\Gamma, I) = M(\Gamma, I) / \{ a\bar{a} = 1 \mid a \in \Gamma \}.
\end{align}

Since $\Gamma$ is finite, we only consider finitely generated free partially commutative monoids and groups here. 
Free partially commutative monoids (without an involution) were introduced by Cartier and Foata~\cite{cf69}. 
Following computer science notation, an element in $M(\Gamma, I)$ is called a \emph{trace}, and $M(\Gamma, I)$ is also known as a \emph{trace monoid}.\footnote{The term is due to Mazurkiewicz \cite{maz77}; it should not be confused with the trace of a matrix.}

A free partially commutative group (in the above sense) is torsion-free \IFF the involution on $\Gam$ is without fixed points, that is, $\bar{a} \neq a$ for all $a \in \Gamma$. 
These groups were first studied by Droms~\cite{dro85}, who called them \emph{graph groups}. In a more geometric interpretation, a graph group is equivalent to a \emph{right-angled Artin group} (RAAG).
Note that, in general, \eqref{eq:MGamI} also includes every \emph{right-angled Coxeter group} (RACG), where the free partially commutative group $\GGI$ is such a group if every letter in $\Gam$ is self-involuting, that is, $\bar a = a$ for all $a \in \Gam$.

Let us say a little bit more about the the complexity of deciding the existential theories
in the algebraic structures defined by \eqref{eq:MGamI}.
The first estimates of the complexity of Makanin's algorithm \cite{mak77} led to upper time bounds involving a tower of six or seven exponentials. 
Using known facts about solving systems of linear equations over the rationals, it was not too hard to find an exponential space bound \cite[Thm.~12.4.2]{die98lothaire}.
A different approach was taken by Plandowski and Rytter \cite{pr98icalp} who applied Lempel-Ziv encodings to show that the satisfiability of word equations in free monoids is decidable in \textsc{\small PSPACE}. 
They also conjectured that the satisfiability problem for word equations is \textsc{\small NP}-complete, which motivated a revival of the research on Diophantine problems in other algebraic structures. 
For example, it was possible to prove a \textsc{\small PSPACE} upper bound for the existential theories in free partially commutative monoids and groups, which were then subsumed by the results in \cite{dl08ijac} on the existential theory in graph products. 

\begin{conj}\label{conj:PR98}
  The satisfiability of word equations in free monoids is \textsc{\small NP}-complete. 
\end{conj}

Kharlampovich et al.~\cite{KharlampovichLMT10} settled that \textsc{\small NP}-completeness holds for 
the satisfiability of quadratic equations over free groups. 
But so far, the attempts to prove the analogue for quadratic equations over free monoids failed, and \prref{conj:PR98} remains wide open.

Shortly after, Je\.z~\cite{jez13stacs} came up with the intriguing idea of \emph{recompression}:
his conference paper presented in less that 12~pages an algorithm (and a correctness proof thereof) which showed that the satisfiability of word equations in free monoids is in \textsc{\small NSPACE}($n \log n$).\footnote{Without his expectation estimates, 
the proof is shorter and still gives an \textsc{\scriptsize NSPACE}($n^2 \log n$) bound.} This has since been improved to a non-deterministic linear space bound by Je\.z~\cite{jez22}.\footnote{Je\.z also showed that the set of all satisfiable word equations is context-sensitive.}

Analyzing the recompression technique in more detail revealed another unexpected result on the structure of the full solution set of a word equations in free groups, hyperbolic groups, and free partially commutative monoids and groups. 
In each case the full solution set is an \textsc{\small EDT0L}-language, which is perhaps the most interesting class among languages given by Lindenmayer systems.
We refer to \cite{RozS86} for a collection of results about Lindenmayer systems, and for the characterization of 
\textsc{\small EDT0L} by Asveld \cite{Asveld1977}.
In \cite{CiobanuDiekertElder2016ijac} it was shown that the full solution set in free groups is \textsc{\small EDT0L}. 
This was generalized to RAAGs~\cite{DiekertJK2016}, to virtually free groups~\cite{DiekertElder2020ijac}, and to hyperbolic groups~\cite{CiobanuElder2021}. 
More recently, Levine~\cite{Levine2023} considered group extensions, and Duncan, Evetts, Holt, and Rees~\cite{DuncanEvettsHoltRees2023} used \textsc{\small EDT0L}-systems to study equation in solvable Baumslag-Solitar groups. 
The story is bound to continue.\footnote{However, brief histories have to stop; even \emph{The Neverending Story}~\cite{Ende2009} ends after 448~pages.}
 
\subsection{Our Main Results}\label{sec:mainres}

Our results concern the family $\cG$ consisting of triples $[G, \pi, \nf]$ where $G$ is a group, $\pi: \Gam^* \to G$ an epimorphism, and $\nf: G \to \Gam^* $ a \emph{normal form} such that:\footnote{As usual, objects in the family $\cG$ are abstract and defined up to isomorphism.}
\begin{enumerate}
\item The group $G$ is torsion free and the set $\sqrt[2]{g} = \{ h \in G \mid h^2 = g \}$ is finite  for every $g \in G$.
\item The domain of $\pi: \Gamma^* \to G$ is a finitely generated free monoid~--~hence, $G$ is finitely generated~--~and $\mathrm{nf}: G \to \Gamma^*$ is a section of $\pi$, meaning that $\pi(\nf(g)) = g$ for all $g \in G$.
\item For all $u, p, v \in G$ with $p \neq 1$ and for every $n \in \mathbb{N}$, there exists an $N \in \mathbb{N}$ such that the word $\mathrm{nf}(up^N v)$ contains a nonempty factor $q^n \in \Gamma^+$. We denote this property (which depends on the normal form $\mathrm{nf}$) as $\exp(\mathrm{nf}(up^* v)) = \infty$.
\end{enumerate}

Membership in $\mathcal{G}$ crucially depends on the choice of the pair $(\pi, \mathrm{nf})$. 
If a finitely generated group $G$ satisfies the first item, then we might artificially define $\pi : \Gam^* \to G$ and $\nf: G \to \Gam^*$ in such a way that $[G, \pi, \mathrm{nf}] \notin \mathcal{G}$.
In our examples, we mostly investigate groups $G$ alongside some `natural' choice of the pair $(\pi, \mathrm{nf})$.
For instance, $[\mathbb{Z}, \pi, \nf]$ belongs to $\mathcal{G}$ for the most obvious choice of $(\pi, \nf)$, viz.\ the epimorphism $\pi: \Gam^* \to \Z$ induced by the inclusion $\Gam = \os{\pm 1} \sse \Z$ and its unique section $\nf: \Z \to \Gam^*$ with image $\nf(\Z) = \os{-1}^* \cup \os{+1}^*$.

Our first main result on the family $\cG$ is the following closure property.
\begin{thm}\label{thm:main1}
Let $\mathbb{G}$ be a graph product over finitely many groups $G$ with $[G, \pi_G, \mathrm{nf}_G] \in \mathcal{G}$. Then $[\mathbb{G}, \pi_{\mathbb{G}}, \mathrm{nf}_{\mathbb{G}}] \in \mathcal{G}$ for a natural construction of $(\pi_{\mathbb{G}}, \mathrm{nf}_{\mathbb{G}})$ depending only on the $(\pi_G, \mathrm{nf}_G)$.
\end{thm}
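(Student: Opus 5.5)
We verify the three defining conditions of $\mathcal{G}$ for $\mathbb{G}$ with a suitable normal form. Let $\mathbb{G}$ be the graph product over a finite simple graph with vertex groups $G_1,\dots,G_k$, where $[G_i,\pi_i,\nf_i]\in\mathcal{G}$ and $\pi_i:\Gam_i^*\to G_i$. The generating alphabet is the disjoint union $\Gam=\bigcup_i\Gam_i$. Every $g\in\mathbb{G}$ has a reduced expression $g=g_1\cdots g_m$ with syllables $g_j\in G_{i_j}\sm\os{1}$, unique up to commuting adjacent syllables from adjacent vertex groups (Green's normal form theorem). We make it unique by choosing a linear order on the vertices and taking the lexicographically least representative of the trace of vertex labels. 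Then
\[
\nf_{\mathbb{G}}(g)=\nf_{i_1}(g_1)\cdots\nf_{i_m}(g_m).
\]
This is a section of $\pi_{\mathbb{G}}$, which gives condition~(2).

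For condition~(1), we use known facts about graph products. A graph product of torsion-free groups is torsion free, since any finite-order element is conjugate into a vertex subgroup, and that subgroup retracts. For finiteness of square roots, suppose $h^2=g$. Cyclically reduce $h$ by writing $h=xh'x^{-1}$ with $h'$ cyclically reduced. The syllable structure of $h'^2$ is then controlled: either $h'$ lies in a clique subgroup $\prod G_i$, whose roots are finite coordinatewise (a direct product of groups with finite square-root sets has finite square-root sets), or $h'^2$ is cyclically reduced of syllable length $2|h'|$. In every case, the syllable length of $h$ is bounded in terms of $g$, and its syllables are determined up to finitely many choices by those of $g$ together with square roots in vertex groups. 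We would organize this through the standard ``$h'=$ product of commuting blocks'' decomposition of a cyclically reduced element.

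Condition~(3) is the main obstacle. Take $u,p,v\in\mathbb{G}$ with $p\ne1$. First conjugate: write $p=xqx^{-1}$ with $q$ cyclically reduced. Then $up^Nv=(ux)q^N(x^{-1}v)$, so we may assume $p$ is cyclically reduced. Decompose $p=p_1\cdots p_r$ into pairwise commuting blocks on disjoint, mutually adjacent vertex sets, where each block is either a single vertex-group element or has connected non-commutation graph. If some block $p_j$ has syllable length at least $2$, then $p_j^N$ is reduced of linearly growing syllable length. In the reduced form of $up^Nv$, only a bounded amount of $p^N$ cancels against $u$ and $v$, since cancellation happens only across a bounded number of syllables. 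Hence the normal form contains a long stretch in which the syllables of $p_j$ repeat. The lexicographic normalization may interleave letters from other commuting blocks, which is the delicate point. The remedy is to look only at the projection onto the support of $p_j$: under the lex order the $p_j$-syllables appear in a periodic pattern with a bounded interleaving, so some factor of $\nf_{\mathbb{G}}$ is a power $w^n$, possibly after grouping a few periods. If instead every block is a single syllable, pick one block $p_1\in G_i$. The $G_i$-syllable of $up^Nv$ at that position equals $u'p_1^Nv'$ for fixed $u',v'\in G_i$ once $N$ is large, because the remaining parts absorb into fixed prefixes and suffixes. The hypothesis $\exp(\nf_i(u'p_1^*v'))=\infty$ then yields a factor $q^n$ inside $\nf_i(u'p_1^Nv')$, which is a factor of $\nf_{\mathbb{G}}(up^Nv)$ because syllable normal forms are concatenated contiguously. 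Making the ``bounded cancellation and stable neighbouring syllables'' claim precise, uniformly in $N$, is where most of the work lies.
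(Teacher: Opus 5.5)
\textbf{There is a genuine gap in condition~(3).} Your treatment of condition~(2) is fine. So is your treatment of condition~(3) when every block of the cyclically reduced $q$ is a single syllable; that case is essentially the paper's case $r=s$, $|a^*|=\infty$ in the proof of \prref{thm:gpair}.

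The gap is the other case of~(3), which is the heart of the theorem. When $q$ has a connected block $p_j$ of syllable length at least $2$, you only assert that $\nf_{\mathbb{G}}(uq^Nv)$ contains a long power. Periodicity of the projection onto the support of $p_j$ says nothing about \emph{factors} of $\nf_{\mathbb{G}}(uq^Nv)$. What you need is that the greedy lexicographic normalization interleaves the $p_j$-letters with letters of $u$, $v$ and of the other blocks $p_k^N$ only in a controlled way, and there are unboundedly many of the latter. Even the eventual periodicity of the $p_j$-stream needs proof, because it is not $\nflex(p_j)^N$ in general. For example, with $a<b<c$, $(a,c)\in I$ and $b$ dependent on both, one gets $\nflex((abc)^N)=ab(acb)^{N-1}c$. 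You acknowledge this is ``where most of the work lies''; as written, the proposal does not establish admissibility.

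\textbf{What the paper does instead.} It supplies this step as \prref{lem:trlex_per}, which holds for arbitrary traces $u,p,v$, connected or not, so no block decomposition is needed.
\begin{itemize}
\item By induction on $|p|$ (the analogue of your cyclic reduction) it reduces to the case that $p^2$ is reduced.
\item It replaces $u,v$ by reduced representatives of $up^{|u|+1}$ and $p^{|v|+1}v$, so that $u'p^nv'$ is a reduced trace in $M(\Sigma,I)$ for all $n$.
\item It builds a finite $I$-diamond automaton with states $\Suf(u)\times\TO(p)\times\Suf(v)$. Its language consists of prefixes of $up^{|w|}v$ and contains the length-$n$ prefix of $\nflex(up^nv)$.
\item It intersects this automaton with the regular language of lexicographic normal forms. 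It then shows no state carries two distinct simple cycles: two cycles diverging with first letters $a<b_0$ would give a path labelled $b_0\cdots b_na$ with $a$ independent of every $b_i$, which is not a lexicographic normal form.
\item \prref{thm:perper} then yields periodic perfection. Substituting each letter by its local normal form, on a finitely generated $\Delta^*$, transfers this to $\Gamma^*$.
\end{itemize}
Your ``bounded interleaving'' claim would have to be replaced by this argument or by an equivalent proof.

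\textbf{Condition~(1) also needs repair, though this part is routine.}
\begin{itemize}
\item Your dichotomy ``$h'$ lies in a clique subgroup, or $h'^2$ is cyclically reduced of syllable length $2|h'|$'' is false for mixed elements. Take $h'=aw$ with $a\in G_i\setminus\{1\}$ commuting with a cyclically reduced connected $w$ of syllable length at least $2$. Then $h'^2=a^2w^2$ has syllable length $2|h'|-1$, so the block decomposition is not optional.
\item To bound the syllable length of $h=xh'x^{-1}$ by that of $g$, you must also show that $xh'^2x^{-1}$ is reduced as written for a suitable choice of $x$.
\item A finite-order element of a graph product is conjugate into a clique subgroup, not necessarily into a vertex subgroup. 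With torsion-free vertex groups your conclusion is unaffected.
\end{itemize}
The paper avoids conjugation here. \prref{thm:gp_fsqr} analyses the cancellation in $x^2$ directly on the dependence graph of a root $x$, peeling off cancelling maximal/minimal pairs. Torsion-freeness is \prref{thm:gptor}.
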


Our second main result, and our primary motivation for studying the family $\cG$, concerns the exponent of periodicity for solution sets of quadratic equations.
\begin{thm}\label{thm:main12}
Let $[G, \pi, \mathrm{nf}] \in \mathcal{G}$.
Further, let $\mathcal{S}$ be a finite quadratic system of equations over the group $G$ in a set of variables $\mathcal{X}$. 
If the system $\mathcal{S}$ has infinitely many solutions, then there exists a variable $X \in \cX$ such that $\exp(\mathrm{nf}(\{ \sigma(X) \mid \sigma \textup{ solves } \mathcal{S} \})) = \infty$. 
\end{thm}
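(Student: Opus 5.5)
We follow the classical reduction of quadratic systems to standard forms, working only with transformations that keep track of how solutions of the original system are recovered. A quadratic system $\mathcal{S}$ is one in which every variable $X$ (counting $X$ and $\bar X$ together) occurs at most twice in total. The first step is to simplify $\mathcal{S}$ by invertible substitutions of the form $X \mapsto XY^{\pm1}$, $X\mapsto X c$ with $c$ a constant, and cyclic rotations. These reduce $\mathcal{S}$ to a disjoint union of independent equations. Each equation either contains a variable occurring only once, or is in one of the standard forms
\[
\prod_{i}[X_i,Y_i]\prod_j Z_j c_j \bar Z_j = c,\qquad
\prod_i X_i^2\prod_j Z_j c_j \bar Z_j = c.
\]
Since the substitutions are invertible automorphisms of the free product $G * F(\mathcal X)$ fixing $G$, they induce bijections between solution sets. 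Under such a bijection, $\sigma'(X)$ is a fixed word in the values $\sigma(Y)$ and in constants. Infiniteness of the solution set is therefore preserved. We also record the coordinate expressions, since they are needed to transport the final conclusion back.

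The second step is to produce, whenever the solution set is infinite, a one-parameter family of solutions of the shape $\sigma_N(X) = u p^N v$ with $p\neq 1$ for some original variable $X$. Property~3 of $\mathcal{G}$ then gives $\exp(\mathrm{nf}(\{\sigma_N(X)\}))=\infty$ immediately. We argue case by case on the normal forms.

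First, suppose some variable occurs only once, or occurs with some $Z$ that is free. Then a variable is essentially unconstrained, or a cancelling pair $Z\bar Z$ appears, and we can take $\sigma_N(X)=u g^N$ for any nontrivial $g$. Such a $g$ exists because $G$ is torsion-free and infinite: it is nontrivial, since otherwise there would be only finitely many solutions.

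Second, suppose a commutator $[X,Y]$ occurs. From any solution we can apply the Dehn twist $X\mapsto XY$, which preserves the product $[X,Y]$. This yields $X\mapsto X Y^N$, a family of the desired shape as long as $Y\ne 1$. If $Y=1$ in every solution, we twist $Y\mapsto YX$ instead. If both are always trivial, the pair contributes only finitely many solutions.

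Third, for conjugates $Z_j c_j\bar Z_j$ with $j\ge2$, twisting between two consecutive conjugates gives families such as $Z_j\mapsto (Z_jc_j\bar Z_j Z_{j+1}c_{j+1}\bar Z_{j+1})^N Z_j$. These have the shape $up^Nv$ with $p$ the product $g$ of the two conjugates, provided $g\neq 1$. If instead only $j=1$ occurs, the solutions of $Zc_1\bar Z=c$ form a coset $Z_0C(c_1)$ of the centralizer. An infinite centralizer in this setting contains some $h\ne1$, and we take $Z=Z_0h^N$.

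The non-orientable case is where property~1 enters. For a single equation $X^2=c$, the solution set is finite by the finite square roots hypothesis. With two or more squares we use the standard rewriting $X_1^2X_2^2 = (X_1X_2)^2\,[\ldots]$-type identities. These transfer the problem to an equivalent orientable-plus-one-square form, after which the preceding cases apply.

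The main obstacle is degeneracy. We must show that whenever infinitely many solutions exist, at least one of the twisting families actually has $p\neq 1$. In other words, we must rule out that all solutions lie where every available element $p$ is trivial. For this we intend an induction on the number of variables. If, for every solution, all candidate periods vanish, then each solution satisfies extra relations such as $Y=1$ or $g=1$. These relations reduce $\mathcal{S}$ to a quadratic system in fewer variables with the same (still infinite) solution set, up to finitely many branches coming from finite square-root sets. Torsion-freeness guarantees that $p^N\neq p^M$ for $N\ne M$, so the families produced are genuinely infinite and nondegenerate.

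Finally, we transport the family back through the invertible substitutions of the first step. An original variable becomes $w(\sigma_N)$, a fixed product in which $p^N$ appears. After conjugation this is again of the form $u'p'^N v'$ with $p'$ a conjugate of $p$, hence $p'\neq 1$. If $p^N$ appears more than once, we instead choose the variable whose new value contains $p^N$ exactly once, which is possible because the twist changes only one variable. Applying property~3 to this original variable completes the proof of \prref{thm:main12}.
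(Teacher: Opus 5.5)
There is a genuine gap in your last step, where you transport the periodic family back to an original variable. After your reduction to standard form, an original variable $X_i$ takes the value $\sigma'(\phi(X_i))$. Here $\phi$ is the composite substitution, and $\phi(X_i)$ is an arbitrary word in the new variables and constants. Suppose the twisted new variable $Y$, with $\sigma'_N(Y)=yp^N$, occurs $m\ge 2$ times in $\phi(X_i)$. Then $\sigma_N(X_i)=A_0\,(yp^N)^{\pm1}A_1\cdots(yp^N)^{\pm1}A_m$. Property~3 of $\mathcal G$ concerns only sets of the form $up^*v$, so it says nothing about such values.

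Your remedy is to ``choose the variable whose new value contains $p^N$ exactly once, which is possible because the twist changes only one variable''. This does not address the problem. What matters is the multiplicity of $Y$ in the words $\phi(X_i)$, not how many variables the twist moves. Nothing in your reduction controls these multiplicities, and in general no $X_i$ with multiplicity one need exist. For instance, $X_1\mapsto Y_1Y_2Y_1$, $X_2\mapsto Y_1Y_2Y_1Y_2Y_1$ is an isomorphism $F(X_1,X_2)\to F(Y_1,Y_2)$, since the image contains $Y_1$ as the image of $X_1X_2^{-1}X_1$ and $Y_1Y_2$ as the image of $X_2X_1^{-1}$. In it, $Y_1$ occurs twice, respectively three times. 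The same issue affects your free-variable case.

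Two smaller points also need fixing. First, the twist $Z_j\mapsto g^NZ_j$ alone does not preserve $Z_jc_j\overline{Z_j}Z_{j+1}c_{j+1}\overline{Z_{j+1}}$; you must also replace $Z_{j+1}$ by $g^NZ_{j+1}$. Second, with an even number of squares the standard reduction leaves two squares, e.g.\ $X_1^2X_2^2=c$, not ``orientable plus one square''. So that case is not covered by your list.

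The paper's proof is built to avoid any transport through nontrivial words. It never passes to standard forms and only uses steps that leave the variable carrying the family untouched:
\begin{itemize}
\item Variables occurring in two different equations are eliminated, and independent equations are split off; all other variables keep their values.
\item If $W=XU\overline{X}V$, a single fixed solution $\sigma_0$ suffices. If $\sigma_0(U)=1$, then $X$ can be given any value. Otherwise the automorphism $X\mapsto XU$ fixes $W$ and yields the values $\sigma_0(X)\sigma_0(U)^N$, a set $up^*$ of values of $X$ itself. This dichotomy also settles your ``degeneracy'' problem, which you only sketch as a vague induction.
\item If two variables each occur twice with the same exponent, so $W=XUYV_1XV_2YV_3$ or $W=XUYV_1YV_2XV_3$, the automorphism $X\mapsto X\overline{(UY)}$ fixes $Y$ and makes $Y$ occur once with each sign. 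The previous case then gives the family for $Y$ directly.
\item The only remaining case, $W=XuXv$, has finitely many solutions by the finite square roots property.
\end{itemize}
Restructuring your argument along these lines closes the gap and also handles the two-squares case.
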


Combining the above results, we immediately obtain the following.

\begin{cor}\label{cor:main123}
  Let $\mathbb{G}$ be a graph product over finitely many groups $G$ with $[G, \pi_G, \mathrm{nf}_G] \in \mathcal{G}$, and let $(\pi_\G, \nf_\G)$ be as in \prref{thm:main1}.
  If a finite quadratic system of equations $\mathcal{S}$ over $\mathbb{G}$ has infinitely many solutions, then $\exp(\mathrm{nf}_{\mathbb{G}}(\{ \sigma(X) \mid \sigma \textup{ solves } \mathcal{S} \})) = \infty$ for some variable $X$. 
\end{cor}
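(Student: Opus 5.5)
The corollary follows by combining the two main results already stated. Let $\mathbb{G}$ be the graph product over finitely many groups $G$ with $[G,\pi_G,\nf_G]\in\cG$, and let $(\pi_\G,\nf_\G)$ be the natural construction from \prref{thm:main1}. The plan has two steps.

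First, I apply \prref{thm:main1} to conclude that $[\G,\pi_\G,\nf_\G]\in\cG$. Concretely, this gives three facts:
\begin{itemize}
\item $\G$ is torsion free and has finite square root sets;
\item $\G$ is finitely generated via $\pi_\G$, and $\nf_\G$ is a section of $\pi_\G$;
\item $\exp(\nf_\G(up^*v))=\infty$ for all $u,p,v\in\G$ with $p\neq 1$.
\end{itemize}

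Second, I take the finite quadratic system $\cS$ over $\G$ with infinitely many solutions. Since the triple $[\G,\pi_\G,\nf_\G]$ is a member of $\cG$, the hypotheses of \prref{thm:main12} are met with $[G,\pi,\nf]$ replaced by $[\G,\pi_\G,\nf_\G]$. That theorem then yields a variable $X\in\cX$ with $\exp(\nf_\G(\os{\sigma(X)\mid \sigma \text{ solves } \cS}))=\infty$, which is exactly the claim.

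The only point needing care is bookkeeping rather than mathematics. The notion of a ``quadratic system over $\G$'' in the corollary must be the same notion used in \prref{thm:main12}: constants range over elements of $\G$, and each variable together with its inverse occurs at most twice. Likewise, the normal form used to measure the exponent must be precisely the $\nf_\G$ produced by \prref{thm:main1}. Both hold by the way the corollary is phrased, so I expect no real obstacle here. All the substantive work sits in the two theorems, which I assume as stated.
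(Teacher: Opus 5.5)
Your proposal is correct and is exactly the paper's argument. The paper obtains the corollary "immediately" by using Theorem~\ref{thm:main1} to place $[\G,\pi_\G,\nf_\G]$ in $\cG$ and then applying Theorem~\ref{thm:main12} to $\G$; in the body this appears as Corollary~\ref{cor:ggpqwe}, which combines Corollary~\ref{cor:gp_fsqr}, Theorem~\ref{thm:gpair}, and Theorem~\ref{thm:snN}.
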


As an abstract result, \prref{cor:main123} is most interesting when applied to natural triples. 
For example, finitely generated free groups belong to $\cG$ with the standard presentation and normal forms given by reduced words.
This was shown first with a combinatorial proof by Bastien Laboureix (unpublished) during an internship 
at Stuttgart in 2019.\footnote{His proof 
followed an outline communicated by Olga Kharlampovich.}

On the other hand, \prref{cor:main123} is more powerful. 
As shown above, we have $[\Z,\pi,\nf]\in \cG$  with the most natural choices. 
Hence, the example of free groups is just a special case of the following corollary for right-angled Artin groups.

\begin{cor}\label{cor:raagini}
Suppose that $G = G(\Gamma, I)$ is a right-angled Artin group.
Then $[G, \pi, \mathrm{nf}_{\mathrm{slex}}] \in \mathcal{G}$ where $\pi: \Gam^* \to G$ is the canonical epimorphism and $\mathrm{nf}_{\mathrm{slex}}: G \to \Gam^*$ is the short-lex normal form with respect to any linear order $\leq$ on $\Gam$.
Thus, in particular, Theorem~\ref{thm:main12} applies.
\end{cor}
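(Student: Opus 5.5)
The plan is to obtain the corollary from \prref{thm:main1}, applied to the graph product of copies of $[\Z,\pi,\nf]\in\cG$; the only real work is to match the ``natural construction'' with the short-lex normal form. A right-angled Artin group $G(\Gam,I)$ with fixed-point-free involution is the graph product of infinite cyclic groups $\langle a\rangle\cong\Z$, one for each pair $\{a,\bar a\}$. Two vertex groups commute when $(a,b)\in I$, and by the compatibility condition $(a,b)\in I\iff(b,\bar a)\in I$ this relation is well defined on pairs. Each vertex group is equipped with the standard triple $[\Z,\pi,\nf]$ described in the introduction, which lies in $\cG$. \prref{thm:main1} then gives $[G,\pi_\G,\nf_\G]\in\cG$ for the natural graph-product construction. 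Here $\pi_\G:\Gam^*\to G$ is just the canonical epimorphism, since the disjoint union of the vertex alphabets $\{a,\bar a\}$ is $\Gam$.

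It remains to deal with $\nf_\G$ versus $\nfslex$ for an arbitrary linear order $\le$ on $\Gam$. I expect the construction in the body of the paper to be of the following kind: take reduced traces whose syllables are vertex normal forms $a^k$ or $\bar a^k$, and choose a lexicographic representative with respect to an order on the vertex groups. If the construction allows any linear order on the vertex set together with lexicographic linearization of traces, then I would show it coincides with $\nfslex$. The argument has two steps. First, in a RAAG every shortest word representing $g$ is a linearization of the unique reduced trace of $g$, because $G$ has a confluent trace-rewriting system $a\bar a\to 1$. Second, the lex-minimal linearization of a trace with respect to $\le$ on $\Gam$ is produced by the same greedy procedure as the one used on syllables. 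The one mismatch is that $\le$ on $\Gam$ can separate $a$ and $\bar a$ by other letters. This is harmless, because a reduced trace never contains both $a$ and $\bar a$ as minimal letters at the same time, so the restriction of $\le$ to the letters that actually occur behaves like an order on vertex groups.

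If the body's construction is more rigid than this, I would verify the three conditions of $\cG$ for $\nfslex$ directly. Torsion-freeness holds for RAAGs. Finiteness of square roots follows from the closure part of \prref{thm:main1}, which is independent of the normal form, or from the uniqueness of roots in RAAGs. The main obstacle is $\exp(\nfslex(up^*v))=\infty$. Here I would cyclically reduce $p$, writing $p=wp'\bar w$, and decompose $p'$ into commuting connected components, one of which has infinite order. For large $N$, the reduced trace of $up^Nv$ then contains a factor $r^{M}$ with $M$ growing with $N$, where $r$ is a connected, cyclically reduced trace. Its short-lex linearization is periodic in a long middle segment, because the lex-first linearization of $r^{M}$ stabilizes after a bounded prefix; connectivity of $r$ bounds how far letters can commute across copies. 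Interleaving with letters of $u$ and $v$ that commute with $r$ affects only bounded portions of that middle segment. This yields a factor $q^n$ for every $n$, and \prref{thm:main12} then applies.
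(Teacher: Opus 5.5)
Your overall route is the paper's: view $G(\Gam,I)$ as a graph product of copies of $\Z$, get torsion-freeness and finite square roots from \prref{cor:gp_fsqr}, and get admissibility from \prref{thm:gpair}. The square-root part does not depend on the normal form and is fine.

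The gap is your claim that the graph-product normal form $\nf_{\mathbb{G}}$ coincides with $\nfslex$ for an \emph{arbitrary} linear order $\le$ on $\Gam$. The construction in \prref{sec:nfM} compares letters of different vertex groups only through a vertex order $\le_J$. By contrast, $\nfslex$ compares the actual letters. Your observation that $a$ and $\bar a$ are never simultaneously minimal is true but does not help. The greedy step compares a letter of one vertex group with a letter of another, and whether that letter is $a$ or $\bar a$ can change the outcome. Concretely, take $(a,b)\in I$ and $a<b<\bar a<\bar b$. Then $\nfslex(ab)=ab$ and $\nfslex(\bar a b)=b\bar a$. Every choice of $\le_J$ gives either $ab,\ \bar a b$ or $ba,\ b\bar a$. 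So $\nfslex$ is not of the form $\nf_{\mathbb{G}}$ for any vertex order. ``The restriction of $\le$ to the letters that occur'' does not define such an order either: it depends on the element, and even a single reduced trace such as $ac\bar a$ with $(a,c)\notin I$ contains both $a$ and $\bar a$. As written, your first route covers only orders in which letters of independent vertex groups compare independently of the choice between $a$ and $\bar a$.

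The statement is nevertheless true for every $\le$. The tool that gives it is \prref{lem:trlex_per}, which allows an arbitrary linear order on the alphabet; apply it in $\MGI$ instead of in $M(\Sig,I)$. Every geodesic word for $g$ is a linearization of the reduced trace $\rho(g)\in\MGI$, so $\nfslex=\nflex\circ\rho$. Now run the reduction from the proof of \prref{thm:gpair} with $\Gam$ in place of $\Sig$.
\begin{itemize}
\item If $p^2$ is not reduced, a maximal letter $c$ of $p$ cancels a minimal letter $\bar c$. These are different positions because $c\neq\bar c$, so $p=\bar c\,q\,c$ with $q\neq 1$ and $up^nv=(u\bar c)\,q^n\,(cv)$, and you induct on $|p|$.
\item If $p^2$ is reduced, the cancellations of $u$ and $v$ against $p^n$, and against each other across $p^n$, involve at most $|u|+|v|$ letters and are the same for all large $n$.
\end{itemize}
This yields fixed traces $u',p',v'\in\MGI$ with $p'\neq 1$ and $\rho(up^nv)=u'p'^{\,n-k}v'$ for all large $n$. \prref{lem:trlex_per} then shows that $\nfslex(up^*v)$ is periodically perfect.

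This also supersedes your fallback, whose key step is not justified as sketched. The other connected components of $p'$ also occur $N$ times, and they can interleave with $r^M$ in the lex-first linearization in unbounded blocks; in \prref{ex:example_automaton} the normal form is $ab(bc)^ncd(ad)^{n-1}$. Controlling only the bounded interference from $u$ and $v$ is therefore not enough. Handling that interleaving, for every order, is exactly what the automaton argument in \prref{lem:trlex_per} does.
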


Moreover, the family $\cG$ is much larger than the family of right-angled Artin groups:
we show that it includes graph products over finitely many
groups from the following list.

\begin{itemize}
  \item An infinite class of \BSG{s}, which we characterize in \prref{sec:BS}.
  \item An infinite class of (strongly) polycyclic groups properly containing all finitely generated torsion-free nilpotent groups; see \prref{sec:WIP}. 
  \item The class of all torsion-free hyperbolic groups; see \prref{sec:hype}.
\end{itemize}

On our rather long way to show the main results, we present some original 
proofs.\footnote{First ideas for the paper came to light, soon after the joint Miasnikov- and GAGTA-conference at Stevens Institute (Hoboken, NJ) while flying during sunrise in June 2025 across Tipperary (Ireland) on the long way to go home. It was indeed a long, long way to go 
before a draft of this paper was ready.} 
For example, we show that a any graph product of torsion-free monoids is torsion free (\prref{thm:gptor}), which generalizes a result of Green for graph product of groups
\cite[Thm.~3.26]{green1990graph}. 

\section{Preliminaries}\label{sec:prel}

We use standard notation of set theory. 
Frequently, we identify an element with the singleton containing this element. 
The restriction of a mapping $\psi: B\to C$ to some $A \sse B$ is still denoted $\psi$.
The set of mappings from $A$ to $B$ is denoted by $B^A$.
If $\phi: A\to B$ and $\psi: B\to C$ are mappings then the composition $\psi \phi:A\to C$ is defined by $\psi \phi(a)= \psi(\phi(a))$. 

By $\N$ and $\Z$ we denote the set of natural numbers and integers, respectively;
the set~$\N_+$ consists of the positive natural numbers.
For monoids $N,M$ we write $N\leq M$ if $N$ is a submonoid of $M$, and if $M=G$ is a group, then 
$N\trianglelefteq G$ means that $N$ is a normal subgroup.

Let $\Sig$ be a set, then $\Sig^*$ denotes the free monoid over $\Sig$.
In this context $\Sig$ is a (possibly infinite) \emph{alphabet}, the elements of $\Sig$ are called \emph{letters}, and the elements of $\Sig^*$ are called \emph{words}.
The empty word is denoted by $1$, as is the neutral element in other monoids.
If $w \in \Sig^*$ is a word, then $\abs{w}$ denotes its length and $\abs{w}_a$ denotes the number of occurrences of the letter $a \in \Sig$ within $w$ so that, in particular, $\abs{w} = \sum_{a \in \Sig} \abs{w}_a$. 
If $\Gam$ is finite and $\leq$ is a linear order on $\Gam$, then every nonempty subset $L\sse \Gam^*$ has a \emph{short-lex} first word, which is lexicographically first with respect to $\leq$ among the words of minimal length in $L$.  

A monoid is \emph{finitely generated}, abbreviated \emph{\fg}, if there is an \epi 
of monoids $\pi:\Gam^*\to M$ where $\Gam$ is a finite set, and 
therefore, $\pi(\Gam)\sse M$ is finite generating subset of $M$.

An element~$x$ of a monoid~$M$ with $x \neq 1$ is called a \emph{torsion element} if there are $r,p\in \N$ with $p\geq 1$ such that $x^{r+p}=x^r$. 
We call~$M$ \emph{torsion free} if it contains no such element.

For~$x \in M$, we also define its set of \emph{square roots} as~$\sqrt[2]{x} = \set{y \in M}{y^2 = x}$ and we say that~$M$ has the \emph{\fsqrtp} if~$\sqrt[2]{x}$ is finite for every element~$x \in M$.

\subsection{Regular, Rational, and Recognizable Sets}\label{sec:regrecrat}

The family of regular sets over a finite alphabet $\Gam$, which we denote by $\Reg(\Gam^*)$, is defined as usual in formal language theory, see \eg~\cite{eil74,HU} or any other textbook about formal languages.\footnote{A subset in a \fg~free monoids is also called a language.} 
For any monoid $M$, we denote the family of \emph{rational subsets} of~$M$ by $\Rat(M)$. 
It consists of those subsets $L \sse M$ which can be written as the image $L = \psi(K)$ of some regular set $K \in \Reg(\Gam^*)$, where $\Gam$ is a finite alphabet, under a homomorphism $\psi: \Gam^* \to M$.

There are at least two other common characterizations of
$\Rat(M)$. The first one is by \emph{regular expressions.}
It say that $\Rat(M)$ is the least family of 
subsets which contains all finite subsets of $M$ and which is closed under finite union, concatenation, and the \emph{star-operator} which is defined for $L\sse M$ 
by the union $L^*=\bigcup\set{L^i}{i\in \N}$ 
where $L^0=\os 1$ and $L^{i+1}=L^{i}\cdot L$ for $i\in \N$.
That is, $L^*$ is the submonoid of $M$ which is generated by $L$.

The second characterization is based on the concept of nondeterministic automata. 
An \emph{$M$-automaton} $\cA$ 
is a tuple $\cA=(Q,M,\del,I,F)$ where $Q$ is the set of \emph{states}, $I\sse Q$ is the set of \emph{initial states}, 
$F\sse Q$ is the set of \emph{final states}, and $\del\sse Q \times M \times Q$ is the \emph{\tra relation}. 
The \emph{accepted language} of the $M$-automaton $\cA$ is defined as usual: 
\[
  L(\cA)=\set{x\in M}{\text{$x$ is the label of some path from an initial state to a final state}}. 
\]

The automaton $\cA$ is called \emph{complete} if 
for every $p\in Q$ and $x\in M$ there is a path from $p$ to some state $q$ which is labeled by $x$.
It is called \emph{deterministic} if there is 
exactly one initial state $q_0$ and for each state $p$ there is at most one~$q$ such that there is a path labelled by~$x$ from~$p$ to~$q$.
In this case we also write $q=p\cdot x$. If $\cA$ is complete and deterministic, then $(p,x)\mapsto p\cdot x$ defines a right-action of $M$ on~$Q$.
Finally, if $Q$ and $\del$ are finite, then we say that 
$\cA$ is \emph{finite}. 
A nondeterministic finite $M$-automaton 
is abbreviated as an $M$-NFA, and it is abbreviated as an $M$-DFA if $\cA$ is deterministic.  
It is easy to see that $L\in \Rat(M)$ \IFF $L$ is accepted by some $M$-NFA $\cA$, meaning that $L = L(\cA)$.

\medskip

Apart from rational subsets, there is also the family $\Rec(M)$ of \emph{recognizable} subsets of~$M$.
It consists of those subsets $L \sse M$ that are \emph{recognized} by some \hom $\mu : M \to N$ to a finite monoid $N$, which means that $L = \oi \mu(\mu(L))$. 

For finitely generated free monoids, the above concepts coincide: Kleene's Theorem~\cite{kle56} asserts that, for every \fg~free monoid $\Gam^*$, every $L\in \Rat(\Gam^*)$ is accepted by some complete DFA~$\cA$ and is thus recognized by the induced homomorphism to the transformation monoid on the set of states of $\cA$.
As a consequence $\Reg(\Gam^*)=\Rat(\Gam^*)=\Rec(\Gam^*)$.

On the other hand, if $M=G$ is an infinite group, then its nonempty finite subsets are rational, but they are never recognizable because the kernel $\oi \mu(1)$ of any \hom $\mu:G\to N$ to a finite monoid $N$ is a subgroup of finite index in $G$ and, hence, infinite.

\medskip

For the following facts on rational and recognizable sets, we refer to \cite{eil74} and \cite[Sec.~7]{edam16}.
\begin{itemize}
\item A monoid~$M$ is \fg \IFF $\Rec(M) \sse \Rat(M)$ \IFF  $M\in \Rat(M)$.
\item For every \hom $\phi: M \to M'$, the following implications hold.
  \begin{itemize}
    \item If $L \in \Rat(M)$, then $\phi(L) \in \Rat(M')$.
    \item If $L' \in \Rec(M')$, then $\oi\phi(L') \in \Rec(M)$.
  \end{itemize}
\item The family $\Rec(M)$ is a Boolean algebra with respect to the standard set operations. 
\item If $M$ contains the free product $\N_+ \ast (\N_+ \times \N_+)$, then $\Rat(M)$ is not closed under finite intersection.\footnote{In fact, closure of $\Rat(M)$ under finite intersection should be viewed as a rare exception.}
\Ip this implies $\Rat(M) \neq \Rec(M)$.
\end{itemize}

If we speak about a regular set, then we always refer to a subset of a \fg~free monoid.
This is to avoid confusion, since some literature defines $\Reg(M)$ for a general monoid $M$ as either $\Rat(M)$ or as $\Rec(M)$ (or sometimes even as something completely different). 

\section{The Exponent of Periodicity}\label{sec:expo}

Throughout this section, let $\Gam$ be a finite alphabet.
Then the \emph{exponent of periodicity} of a word $w \in \Gam^*$ and of a language $L \sse \Gam^*$ are defined as follows.
\begin{align}\label{eq:wexp}
\exp(w)= \max\set{e\in \N}{w=u\, p^{e}\, v \wedge p\neq 1} \in \N
\end{align}
\begin{align}\label{eq:expL}
\exp(L)= \sup\set{\exp(w)}{w\in L}\in \N\cup \os\infty.
\end{align}
We are mainly interested in the asymptotical behavior of the exponent of periodicity.
To this end, we call a language~$L \subseteq \Gam^*$ \emph{periodically nice} if the equivalence~$\exp(L) = \infty \Leftrightarrow |L| = \infty$ holds; thus a periodically nice language is either finite or has infinite exponent of periodicity.

Sometimes we also deal with \emph{periodically perfect} languages~$L \subseteq \Gam^*$. These are those languages which satisfy a much stronger property:
for every~$n \in \N$ there exists an~$N \in \N$ such that~$|w| \geq N$ implies~$\exp(w) \geq n$ for all~$w \in L$.
Informally, a language $L$ is periodically perfect if every long word $w\in L$ has high exponent of periodicity. 
\prref{thm:perper} below shows that, for regular languages, these two notions are closely related.

To extend the concept of the exponent of periodicity to arbitrary finitely generated monoids, we make use of normal forms.

\begin{defi}\label{def:nf}
Let $\pi:\Gam^*\to M$ be an \epi onto a monoid $M$.
A \emph{normal form} (with respect to $\pi$) is a mapping $\nf : M \to \Gam^*$ such that $\pi(\nf(x)) = x$ for all $x \in M$.
We say that $\nf$ is \emph{geodesic} if the normal form $\nf(x)$ of every $x \in M$ has minimal length in $\oi\pi(x)$.
\end{defi}

The two specific types of normal forms that we are interested in
are defined next. We allow that $\nf{1}\neq 1\in \Gam^*$ because the 
the proofs do not need $\nf(1)=1$. On the other hand, in all concrete
constructions and examples $\nf(w)$ does not have a nontrivial 
factor $u$ with $\pi(u)=1$. This is also true for the `weird' normal forms in \prref{ex:fibnfs}.

\begin{defi}\label{def:expinfty}
  Let $\nf:M\to \Gam^*$ be a normal form with respect to $\pi: M \to \Gam^*$. 
\begin{itemize}
  \item The normal form $\nf$, and the pair $(\pi, \nf)$, are called \emph{admissible} if the set $\nf(up^*v)$ is periodically nice for all $u,p,v \in M$. That is, $\exp(\nf(up^*v))=\infty$ whenever $up^*v$ is infinite.
  \item The normal form $\nf$, and the pair $(\pi, \nf)$, are called \emph{perfectly admissible} if $\nf(up^*v)$ is periodically perfect for all $u,p,v \in M$. That is, if for for each $n\in \N$ there is some $N\in \N$ such that every word $w\in \nf(up^*v)$ with $\abs{w} \geq N$ satisfies $\exp(w)\geq n$.
\end{itemize}
\end{defi}

Note that a perfectly admissible pair $(\pi,\nf)$ is admissible (since we assume throughout that $\Gam$ is finite). 
Note also that using normal forms is essential for defining the exponent of periodicity for elements and sets of monoids.
For example, if a monoid $M$ contains some subgroup $G$ which has an element $g$
of infinite order, then the set $\set{g^ng^{-n}}{g\in G \wedge n\in \N}$ would have an infinite exponent of periodicity under the na\"ive definition, even though it has but a single element.
However, when using normal forms, the property whether $\exp(\nf(S))=\infty$ for some $S\sse M$ depends highly on the choice of the normal form. 
The following example shows that it is quite easy to construct `weird' normal forms which are geodesic but still not admissible.
The example uses the additive monoid $\N$, but it clearly extends to every 
monoid or group with an infinite monogenetic submonoid.\footnote{Recall that 
a monogenetic submonoid refers to a submonoid generated by a single element.} 
\begin{exa}[Weird Normal Forms]\label{ex:fibnfs}
Consider a two-letter alphabet~$\Gam = \os{a, b}$ and the presentation~$\pi: \Gam^* \to \mathbb{N}$ given by~$a \mapsto 1$ and~$b \mapsto 1$.
In order to construct a geodesic normal form~$\nf:\N\to \os{a, b}^*$, we simply let $\nf(n)$ be the prefix of length $n$ of the (infinite) Thue-Morse word $t = abbabaabbaababba\cdots$, which is the fixed point of the homomorphism defined by $a \mapsto ab, b \mapsto ba$.
Since the Thue-Morse word $t$ is cube free \cite[Prop.~3.1.1]{lot02}, we have $\exp(\nf(n)) \leq 2$ for all $n \in \mathbb{N}$.
\hspace*{\fill}$\diamond$
\end{exa}

On the other hand, the \emph{usual} normal forms of the free group~$F(\Gam)$, of the free Abelian group~$\Z^n$ and its submonoid $\N^n$, as well as of any finite monoid are all perfectly admissible.

\begin{rem}\label{rem:wplexnf}
We are mainly interested in \fg~monoids with a decidable word problem. 
Suppose that $\pi:\Gam^*\to M$ is an \epi where $\Gam$ is finite
and $\leq$ is a linear order on $\Gam$, and that $\nf:M\to \Gam^*$ is any normal form where $\nf(M) \sse \Gam^*$ is a recursively enumerable.
Let us also consider the short-lex normal form $\nfslex : M \to \Gam^*$, where $\nfslex(x)\in \Gam^*$ is the lexicographically first among all shortest words in $\oi\pi(x)$.
Then the following are equivalent.
\begin{enumerate}
\item The monoid $M$ has a decidable word problem.
\item On input $w\in \Gam^*$ we can compute 
the the short-lex normal 
form $\nfslex(x)$. 
\item On input $w\in \Gam^*$ we can compute the normal form $\nf(\pi(w))$. 
\end{enumerate}

The implication 
$(1) \Rightarrow (3)$ is standard: we start $w\in \Gam^*$ 
and the run the enumeration to list all words in $\nf(M)$, we stop at the first hit where the enumeration outputs a word $u$ such that 
$\pi(u)=\pi(w)$. This implies $\nf(w)=u$.
For the other direction $(3) \Rightarrow (1)$ we compute
on $u,v\in \Gam^*$ the normal forms $\nf(u)$ and $\nf(v)$.
Clearly, $\pi(u)=\pi(v)\iff \nf(u)=\nf(v)$. 
We are done because short-lex normal 
forms are computable. 
\hspace*{\fill}$\diamond$
\end{rem}

\begin{lem} \label{lem:nice_perfect}
  For every language $L \subseteq \Gam^*$, the following assertions are equivalent.
  \begin{enumerate}
    \item The language $L$ is periodically perfect.
    \item Every subset $K \subseteq L$ is periodically perfect.
    \item Every subset $K \subseteq L$ is periodically nice.
  \end{enumerate}
  In particular, any periodically perfect language is periodically nice.
\end{lem}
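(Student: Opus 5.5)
We prove the cycle of implications $(1)\Rightarrow(2)\Rightarrow(3)\Rightarrow(1)$. The final claim then follows by taking $K=L$ in~(3).

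For $(1)\Rightarrow(2)$, periodic perfectness is inherited by subsets. If for every $n$ there is an $N$ such that all $w\in L$ with $\abs{w}\geq N$ satisfy $\exp(w)\geq n$, then the same $N$ works for every $K\sse L$, because the condition quantifies universally over the words of the language.

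For $(2)\Rightarrow(3)$, it suffices to show that every periodically perfect language $K$ is periodically nice, and then apply this to each subset of $L$. The implication $\exp(K)=\infty\Rightarrow \abs{K}=\infty$ holds for every language. A single word $w$ has $\exp(w)\leq\abs{w}$, since $w=up^ev$ with $p\neq 1$ forces $e\leq \abs{w}$; so a finite language has finite exponent. For the converse, suppose $K$ is infinite. Because $\Gam$ is finite, there are only finitely many words of each length. Hence for every $N$ the language $K$ contains a word of length at least $N$. By perfectness, for each $n$ we choose the corresponding $N$ and obtain a word $w\in K$ with $\exp(w)\geq n$. Thus $\exp(K)=\infty$.

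For $(3)\Rightarrow(1)$, we argue by contraposition. Suppose $L$ is not periodically perfect. Then there is some $n$ such that for every $N$ there exists $w\in L$ with $\abs{w}\geq N$ and $\exp(w)<n$. We pick such words inductively with strictly increasing lengths and collect them into $K\sse L$. This $K$ is infinite, yet $\exp(K)\leq n-1<\infty$, so $K$ is not periodically nice. This contradicts~(3).

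There is no genuine obstacle; the argument is elementary. The only points needing care are two uses of the standing finiteness of $\Gam$. It ensures that an infinite language contains arbitrarily long words, which is used in both directions. The choice of words with strictly increasing lengths guarantees that the constructed $K$ is infinite.
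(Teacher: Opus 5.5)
Your proof is correct and follows the paper's argument. The paper also proves $(3)\Rightarrow(1)$ by contraposition, using the arbitrarily long words of $L$ with exponent of periodicity below $n$ as a subset that is not periodically nice, and it calls the remaining implications straightforward; you have simply written those out (only a side remark: finiteness of $\Gamma$ is needed just in $(2)\Rightarrow(3)$, since in the contrapositive step the strictly increasing lengths already make $K$ infinite).
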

\begin{proof}
  The equivalence~$(1) \Leftrightarrow (2)$ and the implication~$(2) \Rightarrow (3)$ are straightforward. 
  The proof of~$(3) \Rightarrow (1)$ is by contraposition. 
  If~$L$ is not periodically perfect, then there exists some~$n \in \N$ such that~$L$ contains arbitrarily long words with exponent of periodicity less than~$n$.
  These words form a subset~$K \subseteq L$ which is not periodically nice.
\end{proof}

All regular and context-free languages are periodically nice
by the corresponding pumping lemmas.
On the other hand, $\Gam^*$ is not periodically perfect, provided that $\Gam$ has at least two elements.
In fact, being periodically perfect is, in some sense, quite rare even for regular languages, as is evident from the following characterization. 

\begin{thm}\label{thm:perper}
Let $L\sse \Gam^*$ be a regular language and $L=L(\cA)$ where $\cA$ is a deterministic and trim automaton\footnote{Trim means that every state is on some accepting path.} accepting $L$.
Then the following assertions are equivalent. 
\begin{enumerate}
\item The stabilizer of each state is of the form~$w^*$ for some~$w \in \Gam^*$. That is, there is at most one simple cycle attached to every state of~$\cA$. 
\item The regular language $L$ is periodically perfect. That is, for every~$n \in \N$ there exists an~$N \in \N$ such that~$|w| \geq N$ implies~$\exp(w) \geq n$ for all~$w \in L$.
\end{enumerate}
\end{thm}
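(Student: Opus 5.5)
We prove both directions separately, working directly with the deterministic trim automaton $\cA=(Q,\Gam,\del,q_0,F)$ with $|Q|=k$. For a state $q$, its stabilizer is the submonoid $\set{w\in\Gam^*}{q\cdot w=q}$. A \emph{simple cycle} at $q$ is a nonempty word $w$ with $q\cdot w=q$ such that no proper nonempty prefix of $w$ returns to $q$. The stabilizer is freely generated by the simple cycles at $q$. Hence it has the form $w^*$ if and only if there is at most one simple cycle at $q$.

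\textbf{(1) $\Rightarrow$ (2).} Fix $n$ and let $w\in L$ be long, say $|w|\geq N:=k\cdot(n k+1)$. Consider the accepting run $q_0,q_1,\dots,q_{|w|}$. Some state $q$ occurs at least $nk+1$ times, by pigeonhole. Let $i_0<i_1<\dots<i_{nk}$ be its occurrences. Each factor $w[i_j,i_{j+1}]$ lies in the stabilizer of $q$, which by (1) equals $c^*$ for the unique simple cycle $c$ at $q$. So each factor is a positive power of $c$, and $w[i_0,i_{nk}]=c^{m}$ with $m\geq nk\geq n$. Hence $\exp(w)\geq n$. Note that this direction uses neither trimness nor determinism beyond the run being well defined; determinism ensures the stabilizer description is about the unique run.

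\textbf{(2) $\Rightarrow$ (1).} We argue by contraposition. Suppose some state $q$ has two distinct simple cycles $c\neq d$. Since the stabilizer of $q$ is freely generated by its simple cycles, the submonoid generated by $c$ and $d$ is free of rank two, so $\{c,d\}$ is a code. By trimness, choose $u$ with $q_0\cdot u=q$ and $v$ with $q\cdot v\in F$. Then $u\,x\,v\in L$ for every $x\in\os{c,d}^*$. Now take $t_m$ to be the image of the prefix of length $m$ of the Thue–Morse word (or any cube-free infinite word over two letters) under the morphism $a\mapsto c$, $b\mapsto d$. We claim that $\exp(u\,t_m\,v)$ stays bounded while the length grows. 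This contradicts periodic perfectness.

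The main obstacle is the bound on the exponent. A high power $p^e$ inside $u t_m v$ need not align with the factorization into $c$'s and $d$'s. We would handle it as follows.
\begin{itemize}
\item The contributions of the fixed $u,v$ only add a constant, so it suffices to bound $\exp(t_m)$.
\item Suppose $p^e$ occurs in $t_m$ with $e$ large. If $|p|$ is large compared to $|c|+|d|$, we use the defect theorem and synchronization: a code $\{c,d\}$ of two words has bounded synchronization delay up to primitive roots. We expect that a long periodic factor then forces $p^{e'}$ with $e'\geq e/C-1$ to be, up to conjugacy, a power of an element of $\os{c,d}^+$. That would yield a high power in the Thue–Morse word, contradicting cube-freeness.
\item If $c$ and $d$ commute, they would be powers of a common word and the submonoid would not be free. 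So $cd\neq dc$, which is what makes the synchronization argument available.
\item If $|p|$ is small, a factor $p^e$ of length at least $(|c|+|d|)\cdot 4$ contains a full $c$- or $d$-block at two positions differing by $|p|$. By Fine–Wilf, we expect this to reduce to the previous case.
\end{itemize}
If the synchronization argument becomes messy, a simpler fallback is to choose the test words directly: use $x_m=\prod_{j=1}^m c\,d^{\,j}$ with increasing exponents and only $d$-runs. Any period $p$ of a factor of length exceeding $2(|c|+|d|^{\,?})$ must then be compatible with two consecutive distinct blocks $c d^j c$ and $c d^{j+1} c$. Via Fine–Wilf and $cd\neq dc$, this forces $|p|\leq$ a constant multiple of a single block. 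That gives a uniform bound on the exponent, but only for the exponent of powers of words not conjugate to $d$-powers. This caveat is why the Thue–Morse construction is the primary plan.
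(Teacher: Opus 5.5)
Your direction (1)$\Rightarrow$(2) is the paper's pigeonhole argument. Your set-up for (2)$\Rightarrow$(1) is also the paper's: trimness gives $u\{c,d\}^*v\sse L$, and you push Thue--Morse prefixes through $h\colon a\mapsto c,\ b\mapsto d$. Taking $c,d$ to be two distinct first-return words is a little cleaner than the paper's iterated Defect-Theorem reduction, because it gives you a code (in fact a prefix code) at once.

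The gap is the step you yourself call the main obstacle: you never prove that $\exp(h(t_m))$ stays bounded. The bullets are stated as expectations, and the tool they rely on is not available. ``Bounded synchronization delay up to primitive roots'' is not a standard fact. Two-word codes can fail to synchronize exactly on periodic factors: for $\{ab,ba\}$, long factors of $(ab)^n$ admit two parsings with out-of-phase cut points, which is precisely the situation you must control. The Fine--Wilf case split on $|p|$ is not carried out either. The fallback is worse than your caveat suggests: $x_m=\prod_{j\le m}c\,d^{\,j}$ contains the factor $d^{\,m}$, so $\exp(x_m)\ge m$ and it cannot serve as a witness.

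What is missing is the content of Lemma~\ref{lem:codes}: for injective $h$, $\exp(h(w))\in\Theta(\exp(w))$. It needs no synchronization, only a pigeonhole on cut points.
\begin{itemize}
\item Let $m=\max(|c|,|d|)$ and write $h(w)=u'p^kv'$ with $p\neq 1$.
\item For $0\le i\le k/m-1$, let $w_i$ be the shortest prefix of $w$ such that $u'p^{im}$ is a prefix of $h(w_i)$. Its overhang $|h(w_i)|-|u'p^{im}|$ lies in $\{0,\dots,m-1\}$.
\item Hence some overhang $d$ is shared by $L\ge k/m^2-1$ indices $i_1<\dots<i_L$.
\item Write $p^m=rs$ with $|r|=d$. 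The nonempty words $q_\ell$ defined by $w_{i_{\ell+1}}=w_{i_\ell}q_\ell$ satisfy $h(q_\ell)=(sr)^{i_{\ell+1}-i_\ell}$.
\item These images pairwise commute, so by injectivity the $q_\ell$ pairwise commute. They are therefore all powers of one primitive word $z$.
\item So $w$ has the factor $q_1\cdots q_{L-1}$, a power of $z$ with exponent at least $L-1$.
\end{itemize}
Since Thue--Morse prefixes have exponent at most $2$, this gives $k\le 4m^2$. The fixed $u,v$ only add a constant, as you noted.

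The paper's own proof of the lemma instead uses just two indices with the same overhang. It then pulls the prefix $h(w_iq^n)$ of $h(w)$ back to a prefix $w_iq^n$ of $w$. That pull-back is justified because $\{c,d\}$ is a prefix code, which your first-return choice guarantees.
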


\begin{proof}
The assertion holds if $L(\cA)$ is finite. 
Therefore, we may assume that $L(\cA)$ is infinite.

Suppose that the stabilizer of each state is of the form~$w^*$ for some~$w \in \Gam^*$. Let $n\in \N$. 
Then the accepting path of every sufficiently long word in $x \in L(\cA)$ has to visit some state at least $n$ times. 
Since this state has a stabilizer of the form $w^*$ for some $w \in \Gam^*$ with $w \neq 1$, we can factorize the word $x$ as $x=uw^nv$. 
This shows that $L(\cA)$ is periodically perfect.

For the converse, assume that there is a state $q$ stabilized by both $u^*$ and $v^*$ for some words $u,v \in \Gam^*$ with $u\notin v^*$ and $v\notin u^*$. By the Defect Theorem (see \eg \cite[Thm.~6.2.1]{lot02}) either $\os{u,v}$ is a code with $u\neq v$  or there is some $w\in \Gam^*$ such that $u= w^k$ and $v=w^\ell$. 

In the second case, we may assume without restriction that $k<\ell$.
Since $u\notin v^*$, $v\notin u^*$, and
$\cA$ is deterministic, there is also a loop around~$q$ labeled by $v' = w^{\ell-k}$ and the loop around~$q$ labeled by $v$ was not simple. 
Continuing in this way, we eventually see that either the stabilizer of $q$ is indeed of the form~$w^*$ for some word $w \in \Gam^*$, or we reach a situation in which $\{u, v\}$ is a code with $u \neq v$. 
We claim that the latter cannot occur.

Indeed, since $\cA$ is trim, there exists $r,s \in \Gam^*$ such that $q_0 \cdot r = q$, where $q_0$ is the starting state of $\cA$, and such that $q \cdot s$ is a final state.
Then the set $r \{u,v\}^* s$ belongs to $L = L(\cA)$, but contains arbitrarily long words with bounded exponent of periodicity whenever $\os{u, v}$ is a code with $u \neq v$.
For example, we can consider consider the set $T \sse \os{a, b}^*$ of all finite prefixes of the Thue-Morse word~$t$ as in \prref{ex:fibnfs}.
Upon applying the injective homomorphism $\varphi: \os{a,b}^* \to \Gam^*$ defined by $\varphi(a) = u$ and $\varphi(b) = v$, we obtain the set $\varphi(T) \sse \os{u,v}^*$.
By the following \prref{lem:codes}, this set has bounded exponent of periodicity.
Hence, $K = r \varphi(T) s$ is an infinite subset of $L$ with bounded exponent of periodicity, contradicting \prref{lem:nice_perfect}.
\end{proof}

To the best of our knowledge, there are so far no published proofs of the statement in the following lemma. 
\begin{lem}\label{lem:codes}
  Let $h : \Gam^* \to \Del^*$ be an injective homomorphism for some finite alphabet $\Gam$.
  Then it holds that $\exp(h(w)) \in \Theta(\exp(w))$ for all $w \in \Gam^*$ as $\exp(w)$ tends to infinity.

  In particular, every $L \sse \Gam^*$ satisfies $\exp(L) = \infty$ \IFF $\exp(h(L)) = \infty$.
\end{lem}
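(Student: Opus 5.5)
The plan is to prove the two inequalities separately: a trivial lower bound $\exp(h(w)) \geq \exp(w)$, and a linear upper bound $\exp(h(w)) \leq C\,(\exp(w)+1)$ with a constant $C$ depending only on $h$. Together these give $\exp(h(w)) \in \Theta(\exp(w))$. The statement about $\exp(L)$ then follows at once, because both inequalities are uniform in $w$.

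\emph{Lower bound.} Since $h$ is injective and $h(1)=1$, we have $h(a)\neq 1$ for every $a\in\Gam$, hence $h(p)\neq 1$ for every $p\neq 1$. So if $w=up^ev$ with $p\neq 1$, then $h(w)=h(u)\,h(p)^e\,h(v)$ with $h(p)\neq 1$. This gives $\exp(h(w))\geq \exp(w)$.

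\emph{Upper bound.} Let $K=\max_{a\in\Gam}\abs{h(a)}$ and set $N=\abs{\Gam}\cdot K$. Suppose $h(w)=h(a_1)\cdots h(a_k)$ contains a factor $q^m$ with $q\neq 1$. Look at the $m+1$ boundaries between consecutive copies of $q$ (including both ends). Each boundary lies inside, or at the start of, a unique block $h(a_t)$. Assign to the boundary the \emph{state} $(a_t,j)$, where $0\leq j<\abs{h(a_t)}$ is the offset of the boundary inside that block. There are at most $N$ states. By pigeonhole, some state $(b,j)$ occurs at boundaries $i_0<i_1<\dots<i_r$ with $r+1\geq (m+1)/N$.

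For $\ell\geq 1$, let $y_\ell$ be the factor of $w$ that starts at the letter covering boundary $i_0$ and ends just before the letter covering boundary $i_\ell$. Because each of these boundaries has offset $j$ inside a block labelled $b$, the word $h(y_\ell)$ is exactly the factor of length $(i_\ell-i_0)\abs{q}$ of the periodic word starting $j$ positions before boundary $i_0$. Hence $h(y_\ell)=\tilde q^{\,i_\ell-i_0}$ for one fixed conjugate $\tilde q$ of $q$, the same for all $\ell$.

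Consequently, for all $\ell,\ell'$ we get $h(y_\ell y_{\ell'})=h(y_{\ell'}y_\ell)$. Injectivity of $h$ then gives $y_\ell y_{\ell'}=y_{\ell'}y_\ell$, so all the $y_\ell$ are powers of a common primitive word $z$ (standard commutation lemma, e.g.\ \cite{lot02}). The words $y_1\prec y_2\prec\dots\prec y_r$ have strictly increasing lengths, since $h(y_\ell)$ strictly lengthens. Therefore $y_r=z^e$ with $e\geq r$, and $y_r$ is a factor of $w$. This yields
\[
\exp(w)\;\geq\; r\;\geq\;\frac{m+1}{N}-1,
\]
that is, $m\leq N(\exp(w)+1)$. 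Taking $m=\exp(h(w))$ proves the upper bound.

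The main obstacle is this synchronization step. A code need not have bounded deciphering delay, so we cannot argue that the parsing of $h(w)$ is locally forced. The state/pigeonhole device avoids this: it produces several aligned factors whose images are powers of the same word, and at that point injectivity only has to be applied to the commutation equation $h(xy)=h(yx)$. The remaining care is bookkeeping: the exact definition of the state, checking that all $h(y_\ell)$ are powers of the same conjugate $\tilde q$, and treating the boundary cases when a boundary falls exactly at a block start, i.e.\ $j=0$.
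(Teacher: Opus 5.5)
Your proof is correct, but at the decisive step it takes a different route from the paper. The paper samples only every $m$-th period, where $m=\max_a|h(a)|$. At each checkpoint $up^{im}$ it records just the overhang $d_i\in[0,m)$ of the covering block, and pigeonhole yields a \emph{single} pair $i<j$ with $d_i=d_j$. This gives a factor $q$ with $w_iq=w_j$ and $h(q)$ a power of a conjugate of $p$. The paper then extrapolates: it asserts that $w_iq^n$ is a prefix of $w$ because $h(w_iq^n)$ is a prefix of $h(w)$ and $h$ is injective.

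That inference is exactly where unbounded deciphering delay matters. For a code that is not a prefix code, $h(x)$ being a prefix of $h(w)$ does not make $x$ a prefix of $w$ (e.g.\ $h(x)=a$, $h(y)=ab$, $h(z)=bb$). So, as written, that step needs more than injectivity. You avoid extrapolation entirely:
\begin{itemize}
\item by recording the pair (letter, offset) at \emph{every} period boundary, you get about $m/N$ aligned boundaries at once;
\item the nested factors $y_1\prec\dots\prec y_r$ are read off from the actual factorization of $w$;
\item injectivity is used only in the form $h(y_\ell y_{\ell'})=h(y_{\ell'}y_\ell)\Rightarrow y_\ell y_{\ell'}=y_{\ell'}y_\ell$, which is airtight.
\end{itemize}
The cost is a constant $N=|\Gamma|K$ that depends on the alphabet size, instead of the paper's $m^2$. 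This is irrelevant for a $\Theta$-statement.

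Two bookkeeping points should be written out.
\begin{itemize}
\item If $q^m$ is a suffix of $h(w)$, the last boundary lies in no block. Just use the $m$ boundaries $0,\dots,m-1$; this still gives $m\le N(\exp(w)+1)$.
\item When $i_0$ is small, the $j$ letters preceding boundary $i_0$ may lie before the start of $q^m$. They still follow the period: they are the first $j$ letters of $h(b)$, which reappear at the start of the block containing boundary $i_\ell$. That block starts strictly after boundary $i_0$, because distinct blocks are disjoint and the block at $i_0$ has length $|h(b)|>j$. Hence those reappearing letters lie inside $q^m$ at the same residues modulo $|q|$. Alternatively, simply discard the first $K$ boundaries.
\end{itemize}
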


\begin{proof}
  Since the homomorphism $h$ maps nonempty words to nonempty words, we clearly have $\exp(w) \leq \exp(h(w))$ for all $w \in \Gam^\ast$.
  Conversely, let $w \in \Gam^*$ with $k = \exp(h(w))$ sufficiently large.
  More specifically, we require that $k \geq 2m^2$ where $m = \max\set{\abs{h(a)}}{a \in \Gam}$.

  By assumption, we have $h(w) = u p^k v$ for some $u,v \in \Del^*$ and $p \in \Del^+$.
  For every $0 \leq i \leq m$, let us write $w_i$ for the minimal prefix of $w$ such that $u p^{i \cdot m}$ is a prefix of $h(w_i)$.
  Then the difference in length $d_i = \abs{h(w_i)} - \abs{u p^{i\cdot m}}$ is bounded by $0 \leq d_i < m$ for all $0 \leq i \leq m$.
  Let us now fix $i$ and $j$ with $0 \leq i < j \leq m$ and $d_i = d_j$; such indices exist by the pigeon hole principle.

  We then find ourselves in the following situation: $up^{i \cdot m}$ is a prefix of $h(w_i)$, which in turn is a \emph{proper} prefix of $up^{j \cdot m}$, which is a prefix of $h(w_j)$.
  Moreover, writing $p^m = rs$ with $r,s \in \Del^*$ and $\abs{r} = d_i = d_j$, we have $up^{i \cdot m}r = h(w_i)$ and $up^{j \cdot m}r = h(w_j)$.
  Hence, the unique word $q \in \Gam^+$ with $w_i q = w_j$ satisfies $h(q) = (sr)^{(j-i) \cdot m}$.
  It then follows that $h(w_i q^n) = u p^{i \cdot m} p^{n \cdot (j-i) \cdot m} r$.
  This is a prefix of $h(w) = up^k v$ in case $i \cdot m + n \cdot (j-i) \cdot m + m \leq k$.
  Since the left side of this inequality can be bounded above by $(n+2) \cdot m^2$, the latter holds for $n = \lfloor k / m^2 \rfloor - 2$.

  We have shown that $h(w_iq^n)$ with $n = \lfloor k / m^2 \rfloor - 2$ is a prefix of $h(w)$.
  Since $h$ is an injective homomorphism, it follows that $w_i q^n$ is a prefix of $w$.
  Hence, $\exp(w) \geq n \geq \Omega(\exp(h(w)))$.
\end{proof}

We conclude this section with a variant of the pumping lemma for regular languages.
\begin{lem} \label{lem:pumping_lemma}
    Let~$M$ be a cancellative torsion-free monoid equipped with an admissible normal form~$\nf$.
    Then~$\nf(L)$ is periodically nice for every set~$L \in \Rat(M)$. 
\end{lem}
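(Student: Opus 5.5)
The plan is to reduce to the admissibility hypothesis by a pumping argument on an automaton for $L$. Suppose $L \in \Rat(M)$ is infinite; since $\nf$ is injective, $\nf(L)$ is infinite, and we must show $\exp(\nf(L)) = \infty$. If $L$ is finite there is nothing to show. Fix an $M$-NFA $\cA = (Q, M, \del, I, F)$ with $L = L(\cA)$, and let $k = \abs{Q}$. Each element of $L$ is the product of labels along some accepting path. Only finitely many elements arise from paths of length at most $k$, because $\del$ is finite. Hence, as $L$ is infinite, some accepting path of length greater than $k$ must exist, and we may choose one producing an element outside that finite set.

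The first step is to find a loop carrying a nontrivial element. Take the set $L_{\leq k}$ of labels of accepting paths of length at most $k$. For every $x \in L \setminus L_{\leq k}$, choose a shortest accepting path for $x$. That path has length greater than $k$, so it revisits a state, and we can factor it as $q_0 \xrightarrow{u} q \xrightarrow{p} q \xrightarrow{v} q_f$ with $x = upv$. If $p = 1$, removing the loop gives a shorter accepting path with the same label $x$, contradicting minimality. So $p \neq 1$, and then $u p^* v \sse L$.

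The second step is to show that $u p^* v$ is infinite. Suppose instead that $up^n v = up^m v$ for some $n < m$. Since $M$ is cancellative, this gives $p^n = p^m$, which makes $p$ a torsion element unless $p = 1$. Torsion-freeness therefore forces all the $up^n v$ to be pairwise distinct. Admissibility of $\nf$ now gives $\exp(\nf(up^*v)) = \infty$, and since $\nf(up^* v) \sse \nf(L)$, we get $\exp(\nf(L)) = \infty$.

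I expect the only delicate point to be the first step: producing a loop with a label $p \neq 1$, given that loops labelled by $1$ can occur. Choosing a shortest accepting path per element settles this cleanly, so in fact no serious obstacle remains. Cancellativity and torsion-freeness are used exactly once, in the second step, to make $up^*v$ infinite.
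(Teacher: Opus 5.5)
Your proof is correct and follows the paper's route. A shortest accepting path longer than the number of states yields a loop with label $p \neq 1$. Torsion-freeness and cancellativity then make $up^*v \sse L$ infinite, and admissibility finishes. You are also a little more explicit than the paper about why such a long shortest path must exist and why minimality rules out $p = 1$.
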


\begin{proof}
Let~$\cA$ be a finite~$M$-automaton with~$n$ states such that the language~$L = L(\cA)$ is infinite.
We have to show that then~$\exp(\nf(L)) = \infty$ as well.
Since~$L$ is infinite, there must exist an element~$w \in L$ such that the shortest path in $\cA$ accepting~$w$ has length greater than~$n$. It must therefore visit some state of~$\cA$ twice, yielding a factorization
\begin{center}\begin{tikzpicture}[scale=0.3]
  \node[state, initial, scale=0.8] (1) at (0, 0) {};
  \node[state, scale=0.8] (2) at (5, 0) {};
  \node[state, accepting, scale=0.8] (3) at (10, 0) {};

  \draw[->] (1) to node[above] {$u$} (2);
  \draw[->] (2) edge[loop above, looseness=10] node[above] {$p$} (2);
  \draw[->] (2) to node[above] {$v$} (3);
\end{tikzpicture}\end{center}
with~$u, p, v \in M$ and~$p \neq 1$.
Since~$M$ is torsion-free, this implies~$|p^*| = \infty$.
Moreover, cancellativity implies that left and right multiplication are injective in~$M$, and thus~$up^*v$ is infinite as well. Finally, the admissibility of~$\nf$ means that~$\exp(\nf(up^*v)) = \infty$, proving the claim since~$up^*v \subseteq L$ by construction.
\end{proof}

In the following we are interested in examples of the just discussed properties.
We start with the behavior with regard to taking graph products and, afterwards, we examine some important classes of groups and normal forms canonically associated with them.

\section{Free Partially Commutative Monoids and Groups}\label{sec:trtheo}

Our results on graph products rely on the theory of free partially commutative monoids. 

Let $\Gam$ be any set and $I\sse \Gam\times \Gam$ an 
irreflexive and symmetric relation. 
Associated with this data is the \emph{free partially commutative monoid}, as introduced by Cartier and Foata~\cite{cf69},
\begin{align}\label{eq:cf69}
M(\Gam,I)= \Gam^*/\set{(ab=ba}{(a,b)\in I} = \Gam^*/\set{ab = ba}{(a,b)\in I}.
\end{align}
Thus, the monoid $M(\Gam,I)$ comes with a canonical \epi
$\theta: \Gam^* \to M(\Gam,I)$. 
Adopting the computer-science terminology, an element in $M(\Gam,I)$ is called a \emph{trace} and $M(\Gam,I)$ is called a \emph{trace monoid}.\footnote{The term is due to \mazu~\cite{maz77}. It must not be confused with the trace of a matrix.}
Let $u,v$ be words in $\Gam^*$ such that
$\theta(u)$ and $\theta(v)$ are equal as traces in $M(\Gam,I)$, 
then $|u|=|v|$ but, otherwise, the words might have a quite different shape.

The pair $(\Gam,I)$ is also called an \emph{independence alphabet}.\footnote{The wording of independence was coined in computer science, where letters denote events using a nonempty set of resources. Thus, independent events commute, but no event is independent of itself.}
It is viewed as an undirected graph.
Without restriction we assume that $\Gam$ is a set with \invol~--~that is, there is a bijection  $a\mapsto \bar a$ such that $\bar{\bar a}=a$ for all $a\in \Gam$~--~and that the \invol is compatible with the independence relation $I$ in the sense that $(a,b)\in I\iff (\bar a, b)\in I$ for all $a,b \in \Gam$.
(For example, one can always take the identity as an \invol, so that $\bar a = a$ for all $a \in \Gam$.)
The \invol can then be extended to $\Gam^*$ and to $M(\Gam,I)$ in such a way that $\ov{xy}= \bar y  \bar x$ holds for all $x,y$.

Using the \invol, the pair $(\Gam,I)$ defines a canonical quotient group 
of $M(\Gam,I)$ by 
\begin{align}\label{eq:GGamI}
G(\Gam,I)= M(\Gam,I)/\set{a\bar a = 1}{a\in \Gam},
\end{align}
which we call a \emph{free partially commutative group}.
If $a\neq \bar a$ for all $a\in \Gam$, then such a group is also called a \emph{graph group} by Droms, \cite{dro85}, because we can view the pair $(\Gam,I)$ as an undirected graph. The family of graph groups  coincides with the family of \emph{right-angled Artin groups}, or RAAGs.
 If we have $a=\bar a$ for all $a\in \Gam$, then $G(\Gam,I)$ is a \emph{right-angled Coxeter group}.
 
The theory of trace monoids is well-established and the cornerstone to understanding RAAGs and, more generally, free partially commutative groups according to the above definition.
We recall some well-known basic facts from this theory in \prref{sec:bastt}.

\subsection{Rudiments of Trace Theory}\label{sec:bastt}
Using the notation from above, let $u=a_1\cdots a_m\in \Gam^*$ such that $a_i\in \Gam$ for all $1\leq i \leq m$. 
Further, let 
$x=\theta(u)$ be the associated trace in $M(\Gam,I)$.  
In general, the trace $x$ may be represented in this way by many different words $u$.
Nonetheless, $x$ has a well-defined length $|x|= |u|$ and a well-defined $a$-length $|x|_a= |u|_a$ for every $a\in \Gam$.

If the alphabet $\Gam$ is given a linear order $\leq$,\footnote{According to ISO~2382 and DIN~44300, an alphabet is a finite set together with a linear order.} 
then the lexicographical order of $\Gam^*$ defines a normal form $\nflex:M(\Gam,I)\to \Gam^*$ such that $\nflex(x)$ is a shortest word 
in $\oi\theta(x)$ for all $x\in M(\Gam,I)$.
Therefore, in trace monoids we have $\nflex=\nfslex$.

In order to have a unique representation 
of $x$ we define a vertex-labeled directed graph $(\pos(x),E(x),\lam)$ where $\pos(x)$ is the set of vertices, which we call  \emph{positions}, 
and $E(x)$ is the set of directed edges (or \emph{arcs}), and 
$\lam$ is the vertex labeling.
This graph is called the \emph{dependence graph} of $x$. This central notion for trace theory is defined next.\footnote{Some say that Plato wrote about 380 BCE at the entrance of his school: `Whoever is not proficient to deal with dependence graphs should not enter the following sections.'}
\begin{itemize}
\item We let $\pos(x)=\os{1, \ldots, m}$. A vertex in $\pos(x)$ is also 
called a \emph{position}.
\item We let $\lam:\pos(x)\to \Gam$ be the vertex labeling
where $\lam(i)=a_i\in \Gam$.
\item We let $E(x)=\set{(i,j)}{1\leq i < j\leq m\wedge (a_i,a_j)\notin I}$.
\item The \emph{dependence graph} $D(x)$ is the abstract graph
$[V(x),E(x),\lam]$, which is the equivalence class of the concrete graph $(\pos(x),E(x),\lam)$ up to \iso.
\end{itemize}
The dependence graph $D(x)$ induces a \emph{vertex-labeled partial order} $P(x)= [\pos(x),\preceq,\lam]$, where $\preceq$ is the reflexive and transitive closure of $E(x)$; and we denote by $H(x)$ the \emph{\HD} of the induced partial order $P(x)$.
That is, $p\to q$ is an arc in $H(x)$ \IFF both, $p\to q$ is an arc in $E(x)$ and there is no position $r$ with $p\prec r \prec q$. 
The arcs in $H(x)$ are called \emph{\HA{s}}.
If a \HA $p\to q$ has the labeling 
$a=\lam(p)$ and $b=\lam(a)$, then, for better readability, we also sometimes write
$a\to b$ instead of $p\to q$. 

The set of positions (that is, the set of vertices) of $D(x)$, $P(x)$, and $H(x)$ are identical. 
Note that for every $1\leq i \leq |x|_a$ the $i$-th position labeled by a letter $a$ is well-defined as a position in the abstract graphs.
A standard result in trace theory due to Mazurkiewicz~\cite{maz77}, which also appears implicitly in the earlier work of Keller \cite{kel73}, says 
that $x = x'$ in $M(\Gam,I)$ \IFF $D(x) = D(x')$ \IFF $P(x) = P(x')$ \IFF $H(x) = H(x')$. 
A proof of this result can also be found in~\cite[Chapt.~1]{dr95} and~\cite{die90}.

\begin{defi}\label{def:indrel}
For $x\in \Gam^*$ we define its \emph{alphabet} $\gam(x)$ by 
$\gam(x)=\set{a\in \Gam}{|x|_a\geq 1}$ and, for $x,y\in \MGI$, we write $(x,y)\in I$ (and call
$x$ and $y$ 
\emph{independent}) if $\gam(x)\times \gam(y)\sse I$.
Thus, the independence relation~$I$ becomes an irreflexive and symmetric relation between traces. 
\end{defi}

A \emph{step} in $M(\Gam,I)$ is a product over pairwise independent letters;
that is, $s\in M(\Gam,I)$ is a step if it can be written as a product~$s=\prod\set{a_i}{i\in \os{1\lds m}}$ such that  
$(a_i,a_j)\in I$ for all $1\leq i <j \leq m$.
Note that $|s|$ cannot be greater than the size of a largest clique in $(\Gam,I)$.

As usual in graph theory, we say that a trace $x\in M(\Gam,I)$ is \emph{connected} if its dependence 
graph $D(x)$ (or its \HD $H(x)$), viewed as an undirected graph, is connected.
Equivalently,~$x$ is connected if its alphabet induces 
a connected subgraph in the undirected graph $(\Gam,D)$ 
where $D=\Gam\times \Gam\sm I$. The pair $(\Gam,D)$ is called a \emph{dependence alphabet}.\footnote{It is not `mad' to write $M(A,D)$ instead of 
$M(A,I)$ as $\gam:M(A,D) \to (A,D)$ is a graph morphism.}

\begin{defi}\label{def:minmax}
Let $x\in M(\Gam,I)$ be a trace. 
By $\min(x)$ (resp.~$\max(x)$) we mean the 
set of minimal (resp.~maximal) positions.
We write 
$\min_\lam(x)$ and~$\max_\lam(x)$ for the steps obtained by taking the product over the labels of the corresponding positions.\footnote{Note that if $x$ itself is a step, then $x= \min_\lam(x)=\max_\lam(x)$.}
\end{defi}

Every trace $x\in M(\Gam,I)$ can be written as 
a product $x=x^{(1)}\cdots x^{(d)}$ of traces such that
each $x^{(c)}$ is nontrivial and connected for all $1\leq c \leq d$ and $(x^{(c)},\,x^{(e)})\in I$ for all $1\leq c <e\leq d$.
The positions in each $x^{(c)}$ thus form a connected component in $D(x)$. 
As a consequence of this, it is easy to see that if $\min(x)\cap \max(x)\neq\es$, then either $|x|\leq 1$ or $x$ is disconnected.

The next proposition
characterizes factors in a trace $x$, to be exactly the convex subsets in the dependence graph of $x$. 
Recall that a subset of vertices $U$ in a directed graph is \emph{convex}
if all vertices of a directed path starting and ending in $U$ belong to~$U$, too. 

\begin{prop}\label{prop:factraze}
Let $D(x)$ be the dependence graph with vertex set $X$ of a trace $x\in M(\Gam,I)$,
and let $U$ be a subset of $X$.
Define further subsets $P$, $Q$, and $V$ of $X$ 
as follows: 
\begin{itemize}
\item $P=\set{p\in X\sm U}{\text{there is path from $p$ to some vertex in $U$}}$,
\item $Q=\set{q\in X\sm U}{\text{there is path from some vertex in $U$ to $q$}}$,
\item $V=X\sm (P\cup Q \cup U)$. 
\end{itemize}
Then the induced subgraphs in $D(x)$ of vertex sets $U,P,Q,V$ define traces $u,p,q,v\in M(\Gam,I)$ such that $(u,v)\in I$. 
Moreover, if $U$ is convex, then we have
$x=puvq$.
Vice versa, if there is a factorization $x=yuz$, then 
there there is partition $X=Y\cup U \cup Z$ in disjoint convex subsets such that the corresponding induced subgraphs 
are $y$, $u$, and $z$ respectively.  
\end{prop}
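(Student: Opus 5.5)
We work with the concrete dependence graph $(\pos(x),E(x),\lam)$ built from a fixed word representative $a_1\cdots a_m$ of $x$, and use Mazurkiewicz's theorem that a trace is determined by its dependence graph. The key tool is this: for any partition of $\pos(x)$ into sets $A_1,\dots,A_k$ such that no arc of $E(x)$ goes from $A_j$ to $A_i$ with $i<j$, we have $x=x_1\cdots x_k$, where $x_i$ is the trace of the induced subgraph on $A_i$. To see this, pick a linear extension of $\preceq$ that lists $A_1$ first, then $A_2$, and so on, and read off the labels. Any linear extension of $P(x)$ is a word representing $x$: it is obtained from $a_1\cdots a_m$ by swapping adjacent independent letters. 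Moreover, an induced subgraph of a dependence graph is the dependence graph of the restricted subword.

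Next we check that $U,P,Q,V$ are pairwise disjoint. Suppose $r\in P\cap Q$, and let $u_1,u_2\in U$ with $u_1\to^*r\to^*u_2$. If $U$ is convex, this is impossible, so in the convex case the four sets partition $X$. In the general case the statement only claims that the four induced subgraphs are traces, which holds for any induced subgraph, together with the independence $(u,v)\in I$. For the latter, take $a\in\gam(u)$ and $b\in\gam(v)$ labeling positions $i\in U$ and $j\in V$. If $(a,b)\notin I$, then $i$ and $j$ are joined by an arc in one direction. That would place $j$ in $Q$ or in $P$, contradicting $j\in V$. Hence $\gam(u)\times\gam(v)\sse I$.

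For the factorization $x=puvq$ with $U$ convex, we use the ordered partition $(P,U,V,Q)$ and verify that no arc goes backwards.
\begin{itemize}
\item An arc out of $Q$ can only reach $Q$. A successor of a point of $Q$ is reachable from $U$, so it lies in $U\cup Q$, and it cannot lie in $U$ by convexity.
\item An arc from $V$ to $P$ or to $U$ would put the $V$-point into $P$.
\item An arc from $U$ to $P$ would make the $P$-point lie both above and below $U$, contradicting convexity.
\item An arc from $U$ to $V$ would put the $V$-point into $Q$.
\end{itemize}
Hence the arrangement $P<U<V<Q$ respects all arcs, and the key tool gives $x=puvq$.

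For the converse, let $x=yuz$. Choose words for $y,u,z$ and concatenate them into a word for $x$. Let $Y,U,Z$ be the position blocks of this word. Each block is an interval in the word order, and every path in $D(x)$ increases the index. A path starting and ending in an interval therefore stays in that interval, so each block is convex. The induced subgraphs are the dependence graphs of the corresponding subwords, and after the identification of the abstract graphs these are exactly $y$, $u$ and $z$.

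The main obstacle is bookkeeping rather than a conceptual difficulty. The general claim that the induced subgraphs are traces, and that positions are independent of the chosen representative, both rest on Mazurkiewicz's identification of traces with dependence graphs.
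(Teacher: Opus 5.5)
Your proof is correct, but it takes a different route from the paper, which only indicates a routine induction on $|X|$ and refers to \cite[Prop.~1.2.8]{die90}.

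Instead of inducting, you isolate one block lemma. If $X$ is split into $A_1,\dots,A_k$ with no arc of $E(x)$ pointing from a later block to an earlier one, then listing the blocks in order, each in index order, gives a linear extension of $\preceq$. This is therefore a word representing $x$, so $x=x_1\cdots x_k$. Both directions then reduce to short checks:
\begin{itemize}
\item $x=puvq$ is the absence of backward arcs for the order $P<U<V<Q$. Your bullet on arcs from $U$ to $V$ is not needed there, since such arcs point forward.
\item The converse is the observation that intervals of a representative word are convex.
\end{itemize}

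What your route buys is a transparent, reusable criterion that pinpoints where convexity matters: only for $P\cap Q=\emptyset$ and for excluding arcs $U\to P$ and $Q\to U$. It also makes visible that $(u,v)\in I$ holds for arbitrary $U$. Everything rests on two facts. The first is the elementary fact that the words representing $x$ are exactly the linear extensions of $P(x)$. The second is the canonical identification of positions across different representatives; the label-preserving isomorphism is unique because equally labelled positions are linearly ordered by arcs.

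An induction on $|X|$ would instead remove one position at a time and argue directly with the commutation relations. It avoids linear extensions, but has to track how $P$, $Q$, $V$ change at each step.
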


\begin{proof}
The proof is a standard routine by induction on $|X|$; see \eg~\cite[Prop.~1.2.8]{die90}.
\end{proof}

\subsection{Ochma{\'n}ski's Characterization of Recognizable Trace Languages}\label{sec:och845}

A complete deterministic automaton $\cA$ such that $L(\cA)\sse \Gam^*$ over an independence $(\Gam,I)$ 
is called~\emph{$I$-diamond}, if~$t \cdot ab = t \cdot ba$ whenever~$(a, b) \in I$ and $t$ is a state of $\cA$.\footnote{Recall that we write $t \cdot a$ to denote the unique state reachable from $t$ via a path labeled $a$.}
The diamond property is illustrated in the following picture.
\begin{center} 
\vspace{-0cm}
		\begin{tikzpicture}[xscale=1.2, yscale=0.7, state/.style={circle, scale=0.8, draw, minimum size=1cm}]
		\node[state] (t) at (0,0) {$t$};
		\node[state] (ta) at (2,1) {$t\cdot a$};
		\node[state] (tb)  at (2,-1) {$t\cdot b$};
		\node[state] (tab) at (4,0) {$t\cdot a b$};
		\draw (t) edge[->,thick] node [above]{$a$}(ta);
		\draw (t) edge[->,thick] node [above]{$b$}(tb);
		\draw (ta) edge[->,thick] node [above]{$b$}(tab);
		\draw (tb) edge[->,thick] node [above]{$a$}(tab);
\end{tikzpicture}
\end{center}
If $\cA$ is~$I$-diamond, then $M(\Gam,I)$
acts on the right on the state space of $\cA$, and $t \cdot u$
is well-defined for all states $t$ and traces $u\in M(\Gam,I)$.
Thus, we can view $\cA$ as a complete deterministic $\MGI$-automaton, and such automata accept recognizable trace languages.

In general, the families of rational and recognizable sets do not coincide for trace monoids.
Indeed, if $(\Gam,I)$ is a finite independence alphabet and $a,b\in \Gam$ with $(a,b)\in I$, then 
\[
  (ab)^* = \set{a^n\,b^n}{n\in \N} \in \Rat(\MGI)\sm \Rec(\MGI). 
\]

The following characterization was obtained by Ochma{\'n}ski~\cite{och84,och85} in his PhD thesis; its proof can also be found in \cite[Chap.~4]{dr95} and \cite[Chap.~2]{die90}.
Ochma{\'n}ski's result relies on his notion of the \emph{concurrent-star operator}. He defined it for a trace language $L\sse \MGI$ as the following submonoid of $\MGI$, where all generators are connected:
\[
  \CoS{L}=\set{x\in \MGI}{\exists y\in \MGI: xy\in L \wedge (x,y)\in I \wedge x \text{ is connected}}^*.
\]

Thus, for $a,b\in \Gam$ with $(a,b)\in I$ as above, we obtain $\CoS{(ab)}= \os{a,b}^*\in \Rec(\MGI)$.
Note also that $L^*\sse \CoS{L}$ holds for every $L \sse \MGI$. 

\begin{thm}[Ochma{\'n}ski; 1984]\label{thm:ochm}
The following assertions hold.
\begin{enumerate}
\item The set $\Rec(\MGI)$ is the least family of subsets of $\MGI$ which contains all finite subsets and is closed under 
finite union, concatenation, and the concurrent-star operator.
\item We have $L\in \Rec(\MGI)$
  \IFF there is a $K\in \Reg(\Gam^*)$ with $K \sse \nflex (\MGI)$ such that $\theta(K)=L$.\footnote{Since $\nflex (\MGI) \in \Reg(\Gam^*)$, this is equivalent to $K = K' \cap \nflex (\MGI)$ for some $K' \in \Reg(\Gam^*)$.} 
In other words, the canonical \hom $\theta: \Gam^\ast \to \MGI$ induces a bijection between the regular languages consisting solely of lexicographic normal forms and the recognizable subsets of $\MGI$.
\end{enumerate}
\end{thm}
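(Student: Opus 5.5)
The plan is to prove part (2) first, by an automaton construction in each direction, and then to obtain part (1) from (2) together with Kleene-type closure arguments.

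For (2), the direction ``$\Rightarrow$'' is direct. If $L\in\Rec(\MGI)$ is recognized by $\mu:\MGI\to N$ with $N$ finite, then $\oi\theta(L)=\oi{(\mu\theta)}(\mu(L))\in\Reg(\Gam^*)$. I would set $K=\oi\theta(L)\cap\nflex(\MGI)$. This requires that $\nflex(\MGI)$ is regular, and I would prove that via the classical characterization of Anisimov and Knuth: a word is a lexicographic normal form iff it contains no factor $b\,u\,a$ with $a<b$, $(a,b)\in I$, and $u\in\os{c\mid (a,c)\in I}^*$. This forbidden-pattern set is regular, so its complement within $\Gam^*$ is regular as well. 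Since $\nflex$ is a section of $\theta$, we get $\theta(K)=L$.

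The direction ``$\Leftarrow$'' of (2) is the main obstacle, since we must show that the image of an arbitrary regular $K\sse\nflex(\MGI)$ is recognizable. My first attempt is to build an $I$-diamond DFA. Let $\cA$ be a DFA for $K$. I would define a deterministic automaton reading an arbitrary word $w$ that keeps track of enough information to simulate $\cA$ on $\nflex(\theta(w))$. The state would record, for each ``pending'' suffix of the current normal form that could still be reordered by later letters, the transformation it induces on $\cA$'s states. Concretely, I would use the fact that appending a letter $a$ to a normal form $v$ inserts $a$ at a position determined by the last letter of $v$ that is dependent on $a$ together with the lex-minimality constraint; the insertion only reshuffles a suffix that is independent of $a$. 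A finite-state account of this requires bounding information by the transformation monoid of $\cA$ indexed by subsets of $\Gam$. That is the technical heart, and it is essentially Ochma\'nski's argument. The alternative is to obtain ``$\Leftarrow$'' from part (1): show inductively that every regular $K$ of normal forms has $\theta(K)$ expressible with union, concatenation and concurrent star, decomposing star-expressions over normal forms into connected components.

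For (1), I would first show that $\Rec$ is closed under the operations. Finite sets, unions and concatenation are easy: for concatenation I would guess a splitting of the dependence graph into a prefix and a suffix using Proposition~\ref{prop:factraze}, which is realizable by an asynchronous, Zielonka-type or diamond product construction. Closure under concurrent star is more delicate. For a connected generator, the factorization into connected components whose members stem from $L$ can be tracked by a diamond automaton that runs independent copies per component. Conversely, every recognizable set lies in the generated family, by a Kleene/McNaughton–Yamada induction on states of a diamond automaton. Here loops labelled by traces from a state back to itself are handled by decomposing each loop trace into connected components, so that ordinary star is replaced by concurrent star. I expect this decomposition, showing that the loop language equals the concurrent star of its connected pieces, to be the key difficulty.
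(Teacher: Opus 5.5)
The paper gives no proof of this theorem; it only refers to Ochma\'nski's thesis and the trace monographs, so the comparison is with the standard argument there. Your direction ``$\Rightarrow$'' of (2), via the Anisimov--Knuth description of $\nflex(\MGI)$, is fine, and so is your treatment of union and concatenation in outline. The two steps that carry the theorem, however, are false or missing.

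\textbf{The loop-language claim is false.} To put every recognizable set into the generated family, you run McNaughton--Yamada on a diamond automaton and replace star by concurrent star. As you say yourself, this needs a loop language to equal the concurrent star of its connected pieces. That fails already for $\Gam=\os{a,b}$ with $(a,b)\in I$ and the two-state automaton computing $|x| \bmod 2$. The loop language at the initial state is the set of traces of even length. It contains $ab$, so $a$ and $b$ are connected pieces, and the concurrent star is all of $\MGI$. Also, a trace does not determine the intermediate states of a path, so the induction must be run on words, where loop labels like $ab$ are disconnected.

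\textbf{The missing idea for $\Rec\subseteq$ generated family, and for (2) ``$\Leftarrow$''.} Run McNaughton--Yamada on a trim DFA for the normal forms $\nflex(L)=\oi\theta(L)\cap\nflex(\MGI)$. Then use the lemma that $\theta(w)$ is connected whenever $ww$ is a factor of a lexicographic normal form. To see the lemma, suppose $\theta(w)$ is disconnected and let $c$ be the largest letter of $w$. After an occurrence of $c$ in the first copy, take the first letter $b$ of $ww$ from a component of $\theta(w)$ not containing $c$, and the last $c$ before it. This gives a forbidden factor $c\,u\,b$, since $b<c$ and $b$ is independent of $cu$. Every loop label $w$ of a trim automaton accepting normal forms satisfies $rwws\in\nflex(\MGI)$ for suitable $r,s$. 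Hence all starred sets consist of connected traces, $\theta(R^*)=\theta(R)^*=\CoS{\theta(R)}$, and $L=\theta(\nflex(L))$ lies in the generated family. The same argument for an arbitrary regular $K\sse\nflex(\MGI)$ gives (2) ``$\Leftarrow$'' from the closure properties. By contrast, your insertion-simulation automaton is left open exactly at its `technical heart'. Your alternative of `decomposing star-expressions into connected components' changes the language unless the starred sets are already connected.

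\textbf{Closure under concurrent star is not reached.} This is the deep half of the theorem. One part is easy: the set of connected components of a recognizable $L$ is recognizable, since for a recognizing morphism $\mu$, membership of a connected $x$ depends only on $\mu(x)$ and $\gam(x)$. What you then need is the theorem of Ochma\'nski and M\'etivier that $T^*$ is recognizable whenever $T$ is recognizable and consists of connected traces. Running `independent copies per component' cannot give this. The factors $t_1,\dots,t_n$ of an element of $T^*$ are not its connected components: the product is typically connected, and the factors interleave in every linearization.

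What makes a finite-state argument possible is a bound on \emph{open} factors. Let $P$ be a downward closed set of positions of $t_1\cdots t_n$, and suppose $t_i$ meets both $P$ and its complement. Connectivity of $t_i$ yields positions $p_i\in P$ and $q_i\notin P$ of $t_i$ with dependent labels, so $p_i\prec q_i$. Now take two such factors $t_i,t_j$ with $i<j$ and $(\lambda(p_i),\lambda(q_i))=(\lambda(p_j),\lambda(q_j))$. Since the labels of $q_i$ and $p_j$ are dependent and $i<j$, we get $q_i\prec p_j\in P$, contradicting downward closure. Hence at most $|\Gam|^2$ factors are open at any moment. A nondeterministic automaton can then accept $\oi\theta(T^*)$: it guesses the factor of each letter, runs a DFA for $\oi\theta(T)$ on each open factor, and records finitely many alphabets to enforce the order constraints.
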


In the terminology of \cite{dm06}, this characterization means that $\Rec(\MGI)$ coincides with the 
family of \emph{normalized regular} subsets of $\MGI$.

\subsection{Transposition and Conjugacy}\label{sec:tracon}

As usual, we call elements $y$ and $z$ of an arbitrary monoid $M$ \emph{transposed} if $y=uv$ and $z=vu$ for some 
$u,v\in M$. 
The transposition relation is reflexive and symmetric, but not transitive in general (as, for example, in the trace monoid $\os{a,b,c}^*/\os{ab=ba}$). 
We denote by $\TO(x)$ the transposition orbit of $x \in M$; that is, $\TO(x)$ is the smallest subset of $M$ containing $x$ which is closed under transposition.

For every monoid $M$ there is also a notion of \emph{conjugacy}~$\sim$ defined by 
$y\sim z$ if $xy=zx$ for some $x\in M$. 
This relation is reflexive and transitive, but not symmetric in general. 

For every group~$G$, the transposition orbit~$\TO(g)$ coincides with the conjugacy class of~$g \in G$.  
In her thesis, Duboc~\cite{dub86,dub86tcs1} showed the analogous statement for trace monoids.

\begin{lem}[Duboc; 1986]\label{lem:tr_trans}
  Let $\MGI$ be a trace monoid.
  Then, for every $x \in \MGI$, the transposition orbit~$\TO(x)$ coincides with the conjugacy class of~$x \in \MGI$.
\end{lem}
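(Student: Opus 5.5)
We prove the two inclusions separately. Here the conjugacy class of $x$ means the set of $z$ with $x\sim z$ (equivalently, with $z\sim x$, once the lemma is established).

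\emph{Transposition implies conjugacy.} Suppose $y=uv$ and $z=vu$. Then
\[
u\cdot z = uvu = y\cdot u \quad\text{and}\quad v\cdot y = vuv = z\cdot v .
\]
Hence $z\sim y$ and $y\sim z$. Since $\sim$ is reflexive and transitive, every element reached from $x$ by a finite chain of transpositions is conjugate to $x$, in both directions. Thus $\TO(x)$ is contained in the conjugacy class of $x$.

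\emph{Conjugacy implies transposition.} We show that $xy=zx$ in $\MGI$ implies $z\in\TO(y)$, by induction on $|x|$. If $|x|=0$, then $y=z$ and there is nothing to show.

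Otherwise pick $a\in\min_\lam(x)$ and write $x=ax'$. The equation becomes $ax'y = zax'$. The left side has a minimal position labeled $a$. By Mazurkiewicz's result the dependence graphs of both sides coincide, so the first $a$-labeled position of $zax'$ must also be minimal. There are two cases.

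\begin{enumerate}
\item Suppose $|z|_a\ge 1$. The first $a$-position of $zax'$ lies in $z$, and since it is minimal in the whole trace it is minimal in $z$. By \prref{prop:factraze} we may write $z=az'$. Left cancellation (trace monoids are cancellative, again by the dependence-graph description) gives $x'y=(z'a)x'$. By induction, $z'a\in\TO(y)$. Since $z=az'$ is a transposition of $z'a$, we get $z\in\TO(y)$.
\item Suppose $|z|_a=0$. The first $a$ is the displayed letter between $z$ and $x'$, and it is minimal only if no position of $z$ is dependent on $a$. Thus $(a,z)\in I$ and $za=az$. Cancelling $a$ on the left gives $x'y=zx'$, and induction yields $z\in\TO(y)$ directly.
\end{enumerate}

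\emph{Conclusion.} Combining both directions, $z$ lies in the conjugacy class of $x$ exactly when $z\in\TO(x)$.

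The main obstacle is the justification of the case split. One must argue carefully, using the uniqueness of dependence graphs and the factor characterization \prref{prop:factraze}, that the minimal $a$ on the left corresponds either to a minimal $a$ inside $z$ or to the middle $a$ with $z$ independent of $a$. Everything else is routine cancellation and induction.
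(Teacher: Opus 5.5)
Your proof is correct. The paper gives no proof of this lemma at all: it only attributes the result to Duboc's thesis and 1986 paper. So your proposal supplies a self-contained argument where the paper relies on a citation.

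The direction ``transposed implies conjugate'' holds in every monoid, via $u(vu)=(uv)u$ and $v(uv)=(vu)v$, exactly as you show. For the converse, your induction on $|x|$ is sound. You peel off a letter $a$ at a minimal position of $x$ and ask where the first $a$-position of $zax'$ lies:
\begin{itemize}
\item if it lies inside $z$, you get $z=az'$ and the shorter equation $x'y=(z'a)x'$;
\item if it is the middle $a$, minimality forces $(a,z)\in I$.
\end{itemize}
The case split is justified because the $a$-labelled positions form a chain in the dependence graph. Hence ``the first $a$-position'' is intrinsic, and its minimality transfers from the factorization $ax'y$ to $zax'$. Cancellativity then closes both cases.

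Your route uses only three facts:
\begin{itemize}
\item Mazurkiewicz's uniqueness of dependence graphs;
\item cancellation in $\MGI$;
\item the fact that a minimal letter can be moved to the front (\prref{prop:factraze}).
\end{itemize}
It also yields a quantitative by-product. If $xy=zx$, then $z$ is obtained from $y$ by at most $|x|$ rotations that move a single letter from the end to the front. In particular, your argument shows directly that $\sim$ is symmetric in trace monoids. This is what makes the phrase ``the conjugacy class of $x$'' unambiguous, even though $\sim$ need not be symmetric in general monoids. By citing Duboc, the paper avoids this work but is not self-contained on this point.

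One notational quibble: the paper defines $\min_\lam(x)$ as a step, i.e.\ a product of letters. So ``$a\in\min_\lam(x)$'' should be read as ``$a$ labels a position in $\min(x)$''.
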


\subsection{Lexicographical Normal Forms}

We conclude this section with the following lemma, which is a key ingredient for obtaining our later results.
It concerns the lexicographical normal forms of trace monoids. 
\begin{lem}\label{lem:trlex_per}
  Let $(\Sig, I)$ be an independence alphabet and $\Sig$ be a (possibly infinite) linearly ordered set and $u, p, v \in \MSI$. 
  By $\nflex:\MSI\to \Sig^*$ we denote the lexicographical normal form.\footnote{It is well-defined since every trace $x\in \MSI$ uses finitely many letters only.}
  Then for every $n\in \N$ there is some $N\in \N$ such that all 
$w\in \nflex(up^*v)$ which satisfy $|w|\geq N$ also satisfy $\exp(w)\geq n$. That is, $\nflex(up^*v)$ is periodically perfect.
\end{lem}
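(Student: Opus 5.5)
The plan is to exploit the greedy, essentially finite-state nature of the lexicographic normal form together with the fact that $up^*v$ has only a bounded amount of ``ragged'' structure. First, everything takes place over the finite alphabet $\gam(upv)\sse\Sig$, so we may assume $\Sig$ finite. I will use two standard facts about $\nflex$.
\begin{itemize}
\item It is computed greedily: $\nflex(x)=a\cdot\nflex(x')$, where $a$ is the $\leq$-least letter of $\min_\lam(x)$ and $x=ax'$.
\item Suffixes of lexicographic normal forms are again lexicographic normal forms of the corresponding suffix traces.
\end{itemize}
So reading $w=\nflex(up^Nv)$ letter by letter amounts to a run through the suffix traces $z_0=up^Nv,\ z_1,\ z_2,\dots$, where each $z_{t}$ is obtained from $z_{t-1}$ by deleting the least minimal letter.

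Next I would describe these suffix traces finitely up to counters. Write $p=p^{(1)}\cdots p^{(d)}$ with connected, pairwise independent $p^{(c)}$, so that $p^N=\prod_c (p^{(c)})^N$. By \prref{prop:factraze}, the prefix of $x=up^Nv$ already read is an ideal (downward closed set) of $D(x)$. For a connected trace $q$, every ideal of $D(q^N)$ has the form ``$i$ full copies of $q$, followed by a ragged part spread over at most $K$ further copies'', where $K$ depends only on $q$ (for instance $K\le |q|\cdot|\Sig|$). The reason is that in the dependence graph of $q^N$, connectedness forces every position of copy $i+K$ to lie above every position of copy $i$. Consequently each $z_t$ is described by a \emph{shape} and a vector of counters $(m_1,\dots,m_d)$. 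The shape ranges over a finite set and records the parts of $u$ and $v$ already read together with the ragged pieces; the counter $m_c$ is the number of untouched full copies of $p^{(c)}$. In particular $z_t$ equals a trace determined by the shape, with $(p^{(c)})^{m_c}$ inserted for each $c$.

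The key observation is that once every counter that is still in play exceeds a threshold $T$ (depending only on $u,p,v$), the set $\min_\lam(z_t)$, and hence the greedy choice, depends only on the shape. Moreover, the resulting change of shape and counters is the same at every counter level above $T$. Hence from two states with the same shape and all relevant counters $\ge T$, the greedy algorithm emits the same letters, as long as the counters stay above $T$.

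Now let $|w|\ge N$ be large. The counters decrease monotonically from $N$, and by the pigeonhole principle over the finitely many shapes I find times $t<t'$ such that $z_t$ and $z_{t'}$ have the same shape. The counters of $z_{t'}$ are then $m_c-\delta_c$ with $\delta=(\delta_c)\neq 0$, since otherwise $z_t=z_{t'}$, contradicting $t<t'$. Let $r$ be the factor of $w$ emitted between $t$ and $t'$. Since the greedy behavior depends only on the shape, the run starting at $t'$ repeats $r$, and then repeats it again, as long as all counters stay $\ge T$. Choosing $t$ early, with counters close to $N$, this yields $r^{n}$ as a factor of $w$ with $n\approx (N-T)/\max_c\delta_c$ times a constant. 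Components with $\delta_c=0$ are harmless: their counters do not move, so they never drop below $T$. Given $n$, choosing $N$ large enough therefore gives $\exp(w)\ge n$, which is the claim.

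The main obstacle is making the ``shape'' abstraction rigorous. Two things have to be proved: that ideals of $q^N$ for connected $q$ have uniformly bounded ragged part, and that $\min_\lam(z_t)$ is determined by the shape once all active counters exceed $T$. The second point is delicate because the interaction with the suffix $v$ must be controlled. What lies above a block of unread copies of $p^{(c)}$ in $v$ depends on whether those copies are exhausted, and this is exactly why the threshold $T$ is needed. The parts of the computation where some counter is near $0$ or near $N$ have bounded length, so they can be absorbed into the choice of $N$.
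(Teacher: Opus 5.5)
Your proposal is correct, and it takes a genuinely different route from the paper.

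\textbf{The paper's route.} The paper builds an $I$-diamond automaton $\cA$ on the finite state set $\Suf(u)\times\TO(p)\times\Suf(v)$, whose accepted traces are exactly those $w$ with $w\in\Pref(up^{\abs{w}}v)$. It intersects this with an automaton for $\nflex(\MGI)$ to get a trim DFA $\tilde{\cA}$ that contains the length-$n$ prefixes of $\nflex(up^nv)$. It then proves $L(\tilde{\cA})$ periodically perfect through \prref{thm:perper}. The key step there is a combinatorial argument: two distinct simple cycles at one state would produce a path whose label is not in lexicographic normal form.

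\textbf{Your route.} You follow the single greedy run on $up^Nv$ through a finite abstraction: the unread suffix of $u$, the unread suffix of $v$, and ragged windows for the connected components of $p$. Determinism plus pigeonhole then produce the repeated factor. This replaces the product construction, the transposition-orbit states, \prref{thm:perper} and the ``at most one simple cycle'' analysis by one observation: a deterministic finite-state process is eventually periodic while the counters are large. It also gives an explicit linear lower bound on $\exp(w)$ in terms of $\abs{w}$. The paper's approach instead yields a statement about a whole regular language of normal-form prefixes, which fits Ochma\'nski's automata-theoretic framework.

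\textbf{Closing the step you flagged.} The bookkeeping you call the main obstacle does go through, and with threshold $T=1$.
\begin{itemize}
\item Take $K=\abs{\gam(p^{(c)})}$ and let the window start at the first copy of $p^{(c)}$ that is not fully read.
\item Every position in a copy beyond the window lies above every position of that first copy, including an unread one. So those later copies are untouched and contain no minimal position of $z_t$.
\item Every later copy contains an occurrence of the same letter as an unread position of the first window copy, so it is not fully read. Hence the window shifts by exactly one copy when its first copy is exhausted.
\item Since $\gam(\rho_c(p^{(c)})^{m_c})=\gam(\rho_c)\cup\gam(p^{(c)})$ for every $m_c\geq 1$, both $\min_\lam(z_t)$ (including which positions of the unread suffix of $v$ are minimal) and the resulting transition depend only on the shape once all counters are positive.
\end{itemize}

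\textbf{Trivial case.} Handle $p=1$ separately. There are then no components and no counters, so your step ``$\delta\neq 0$'' fails, but $up^*v$ is a singleton and the claim is immediate. For $p\neq 1$, note also that $\abs{w}=\abs{u}+N\abs{p}+\abs{v}$, so large $\abs{w}$ forces large $N$.
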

\begin{proof}
The assertion is trivial for $p=1$. 
Hence, we may assume that $p\neq 1$. 
As a consequence, we have $|up^nv|\geq n$ for all $n \in \mathbb{N}$ and, in particular, $up^*v$ is an infinite set in $\MSI$. 
If $\Sig$ is finite, then we let $\Gam=\Sig$. In the other case we define
$\Gam$ as the finite set 
$\Gam=\set{a\in \Sig}{|upv|_a\geq 1}$.
 
By~$\Pref(x)$ and~$\Suf(x)$, we denote the sets of all prefixes and suffixes of a trace $x \in \Gam^*$, respectively. 
Our proof also uses the notion of transposition orbit (\prref{sec:tracon}); and 
it utilizes an $I$-diamond finite deterministic $\Gam^*$-automaton~$\cA$ (\prref{sec:och845}) constructed as follows.
The set of states is $Q = \Suf(u) \times \TO(p) \times \Suf(v)$, with $(u, p, v) \in Q$ being initial, and all states being final.
For $a \in \Gam$, the transitions at $(r, q, s) \in Q$ are given as follows.

\begin{itemize}
\item If~$r = ar'$, then there is a transition~$(r, q, s) \overset{a}{\longrightarrow} (r', q, s)$.
\item If~$q = aq'$ and~$(a, r) \in I$, then there is a transition~$(r, q, s) \overset{a}{\longrightarrow} (r, q'a, s)$.
\item If~$s = as'$ and~$(a, rq) \in I$, then there is a transition~$(r, q, s) \overset{a}{\longrightarrow} (r, q, s')$.
\end{itemize}

By adding an additional non-final state~$\bot$ and appropriate transitions to it, we obtain a complete finite deterministic automaton $\cA$.
Moreover, this automaton is~$I$-diamond, as can easily be checked by examining the different types of transitions.
This means that we can view $\cA$ as an $\MGI$-automaton and, thus, $L(\cA)$ is a recognizable language of~$\MGI$.

Next, we want to determine the language accepted by~$\cA$.
Denote by~$L(r, q, s)$ the language accepted at the state~$(r, q, s) \in Q$.
Then~$w \in L(r, q, s)$ if and only if~$w \in \Pref(rq^{|w|}s)$.
The proof of this fact can be sketched as follows:
First, note that~$w \in \Pref(rq^{|w|}s)$ if and only if~$w \in \Pref(rq^{|w|}q's)$ for any~$q' \in M$ with~$\text{alph}(q') \subseteq \text{alph}(q)$.
Indeed, all the letters appearing in~$q'$ have some Hasse arc into the last~$q$ and thus~$w$ is too short too access any of them. Conversely, any letter of~$s$ that is accessible by~$w$ must be independent from~$q$ and thus also from~$q'$.
Using this fact, and a simple induction on~$|w|$, the proof now again boils down to a straight-forward case distinction corresponding to the three different types of transitions.

The characterization of~$L(\cA)$ now establishes the connection to~$\nflex(up^*v)$:
for each~$n \in \mathbb{N}$ the prefix of length~$n$ of~$\nflex(up^nv)$ is in the language~$\oi\theta (L(u, p, v)) = \oi\theta(L(\cA)) \sse \Gam^*$.
Moreover, the prefix itself is in normal form and thus contained in~$\nflex(L(\cA))$.
Since the automaton~$\cA$ is~$I$-diamond and~$\nflex(\MGI)$ is regular, we can employ the product automaton construction to obtain another trim deterministic automaton~$\tilde{\cA}$ accepting the language~$L(\tilde{\cA}) = \oi\theta(L(\cA)) \cap \nflex(M) = \nflex(L(\cA)) \sse \Gam^*$.
In particular, any path in~$\tilde{\cA}$ is labeled by a lexicographical normal form, as infixes of lexicographical normal forms are themselves in lexicographical normal form.
To complete our proof, it suffices to show that the language~$L(\tilde{\cA})$ is periodically perfect.

By \prref{thm:perper}, we must show that for every state~$z \in \tilde{Q}$ of~$\tilde{\cA}$, there is at most one simple cycle starting at~$z$. 
We denote by~$(\alpha(z), \pi(z), \beta(z)) \in Q$ the projection of the set of states of~$\tilde{\cA}$ onto the set of states of~$\cA$.
We are only interested in~$\pi(z)$, since~$\alpha(z)$ and~$\beta(z)$ must remain constant on any cycle by construction of~$\cA$.
Now consider the following diagram: 
\begin{center}\begin{tikzpicture}[scale=0.3, state/.style={circle, draw, minimum size=1cm}]
\node[state, scale=0.8] (1) at (0, 0) {$z$};
\node[state, scale=0.8] (2a) at (-3.3, -3.5) {$x_1$};
\node[state, scale=0.8] (3a) at (-8.1, -5) {$x_2$};
\node[state, scale=0.8] (4a) at (-8.1, 5) {$x_{m-1}$};
\node[state, scale=0.8] (5a) at (-3.3, 3.5) {$x_m$};
\node[state, scale=0.8] (2b) at (3.5, -3.5) {$y_1$};
\node[state, scale=0.8] (3b) at (8.1, -5) {$y_2$};
\node[state, scale=0.8] (4b) at (8.1, 5) {$y_{n-1}$};
\node[state, scale=0.8] (5b) at (3.5, 3.5) {$y_n$};

\draw[->,thick] (1) to node[above left, scale=0.8] {$a_0$} (2a);
\draw[->,thick] (2a) to node[above, scale=0.8] {$a_1$} (3a);
\draw[->,thick] (4a) to node[below, scale=0.8] {$a_{m-1}$\;} (5a);
\draw[->,thick] (5a) to node[below left, pos=0.3, scale=0.8] {$a_m$} (1);
\draw[->,thick,dotted, bend left=100] (3a) to (4a);
\draw[->,thick] (1) to node[above right, scale=0.8] {$b_0$} (2b);
\draw[->,thick] (2b) to node[above, scale=0.8] {$b_1$} (3b);
\draw[->,thick] (4b) to node[below, pos=0.3, scale=0.8] {$b_{n-1}$} (5b);
\draw[->,thick] (5b) to node[below right, scale=0.8] {$b_n$} (1);
\draw[->,thick,dotted, bend right=100] (3b) to (4b);
\end{tikzpicture}\end{center}

  Note that if~$a_0 = b_0$ then~$x_1 = y_1$ by determinism of~$\cA$ so, by following the paths until they diverge, we may assume that~$a = a_0 < b_0$.
  Inductively, we will show that~$(a, b_i) \in I$ and that~$\pi(y_i) = ab_iv_i$ for some~$v_i \in \Gam^*$.
  This is certainly true for~$\pi(z)$: since we have outgoing transitions labeled~$a$ and~$b_0$, both of them must be minimal elements of~$\pi(z)= ab_0v_0$ and, in particular, independent from one another.
  Next, let~$\pi(y_i) = ab_iv_i$. Then~$\pi(y_{i+1}) = av_ib_i$ by construction of~$\cA$, and the outgoing label~$b_{i+1}$ must be a minimal element of~$\pi(y_{i+1})$.
  By the inductive hypothesis, the word~$b_0\cdots b_i a$ is not in lexicographical normal form and, therefore, we must have $b_{i+1} \neq a$.
  This yields~$(a, b_{i+1}) \in I$ as well as the desired factorization of~$\pi(y_{i+1})$.

  Finally, looking at the diagram again, we identify a path labeled~$b_0\cdots b_n a$ in~$\tilde{\cA}$.
  By the previous considerations, this path is not in lexicographical normal form~--~a contradiction!
  Hence, a pair of cycles as depicted in the above diagram cannot exist.
\end{proof}

\begin{rem}
  Before moving on, we remark that the automaton~$\cA$ constructed above does not accept the entire set~$\Pref(up^*v)$ in general.
  This set need not even be recognizable, as is the case for~$\Pref((ab)^*c) \sse \MGI$ where $\Gam = \os{a,b,c}$ and $I = \{(a, b), (b, a)\}$;
  indeed, we have $\oi\theta(\Pref((ab)^*c)) = \os{a,b}^* \cup \set{wc \in \os{a,b}^*c}{\abs{w}_a = \abs{w}_b} \sse \Gam^*$ and this is clearly not a regular language.
  However, our automaton accepts $L(\cA) = \{a, b\}^* \subseteq \Pref((ab)^*c) \sse \MGI$ and~$L(\tilde{\cA}) = a^*b^* \sse \Gam^*$, and this is sufficient to establish \prref{lem:trlex_per}.
\hspace*{\fill}$\diamond$
\end{rem}

\begin{exa}\label{ex:example_automaton}
  Consider~$M = M(\Gam, I)$ with~$\Gam = \{a, b, c, d\}$ and independence graph 
  \begin{center}\begin{tikzpicture}[scale=0.3]
  \node[scale=0.8] (1) at (0, 0) {$a$};
  \node[scale=0.8] (2) at (0, 5) {$b$};
  \node[scale=0.8] (3) at (5, 0) {$c$};
  \node[scale=0.8] (4) at (5, 5) {$d$};

  \draw (1) to (2);
  \draw (2) to (4);
  \draw (1) to (3);
  \draw (3) to (4);
  \end{tikzpicture}\end{center}

  Now let~$u = b, v = c$ and~$p = abcd$.
  We thus have~$\nflex(up^nv) = ab(bc)^{n}cd(ad)^{n-1}$ for~$n \geq 1$.
  In particular, the set~$\nflex(up^*v)$ is not regular.
  However, the automaton~$\cA$ looks as follows. 
  \begin{center}\begin{tikzpicture}[scale=0.5, state/.style={circle, draw, minimum size=1cm}]
    \node[state, initial, accepting, scale=0.7] (1a) at (-2, 0) {$b, abcd, c$};
    \node[state, accepting, scale=0.7] (2a) at (-2, 5) {$b, bcad, c$};
    \node[state, accepting, scale=0.7] (1b) at (5, 0) {$1, abcd, c$};
    \node[state, accepting, scale=0.7] (2b) at (5, 5) {$1, bcda, c$};
    \node[state, accepting, scale=0.7] (3b) at (10, 0) {$1, acbd, c$};
    \node[state, accepting, scale=0.7] (4b) at (10, 5) {$1, cbda, c$};

    \draw[->, bend left] (1a) to node[left, scale=0.7] {$a$} (2a);
    \draw[->, bend left] (2a) to node[right, scale=0.7] {$d$} (1a);
    \draw[->] (1a) to node[below, scale=0.7] {$b$} (1b);
    \draw[->] (2a) to node[above, scale=0.7] {$b$} (2b);
    \draw[->, bend left] (1b) to node[left, scale=0.7] {$a$} (2b);
    \draw[->, bend left] (2b) to node[right, scale=0.7] {$d$} (1b);
    \draw[->, bend left] (3b) to node[left, scale=0.7] {$a$} (4b);
    \draw[->, bend left] (4b) to node[right, scale=0.7] {$d$} (3b);
    \draw[->, bend left] (1b) to node[above, scale=0.7] {$b$} (3b);
    \draw[->, bend left] (3b) to node[below, scale=0.7] {$c$} (1b);
    \draw[->, bend left] (2b) to node[above, scale=0.7] {$b$} (4b);
    \draw[->, bend left] (4b) to node[below, scale=0.7] {$c$} (2b);
  \end{tikzpicture}\end{center}
  Here each state is labeled with the corresponding lexicographical normal forms and the additional state~$\bot$ is ommited for clarity. It is not hard to see that~$L(\cA) = \Pref(up^*)$, but already~$uv = bc \notin L(\cA)$.
  Now the automaton~$\tilde{\cA}$ may look as follows. 
  \begin{center}\begin{tikzpicture}[scale=0.5, state/.style={circle, draw, minimum size=1cm}]
    \node[state, initial above, accepting, scale=0.7] (1) at (0, 0) {};
    \node[state, accepting, scale=0.7] (2) at (5, 0) {};
    \node[state, accepting, scale=0.7] (3) at (10, 0) {};
    \node[state, accepting, scale=0.7] (4) at (15, 0) {};
    \node[state, accepting, scale=0.7] (5) at (-5, 0) {};
    \node[state, accepting, scale=0.7] (6) at (-10, 0) {};

    \draw[->] (1) to node[above, scale=0.7] {$a$} (2);
    \draw[->] (2) to node[above, scale=0.7] {$d$} (3);
    \draw[->, bend left] (3) to node[above, scale=0.7] {$a$} (4);
    \draw[->, bend left] (4) to node[above, scale=0.7] {$d$} (3);
    \draw[->] (1) to node[above, scale=0.7] {$b$} (5);
    \draw[->, bend right] (5) to node[above, scale=0.7] {$b$} (6);
    \draw[->, bend right] (6) to node[below, scale=0.7] {$c$} (5);
    \draw[->, bend left] (2) to node[below, scale=0.7] {$b$} (5);
  \end{tikzpicture}\end{center}

  Note that the automaton~$\tilde{\cA}$ is not unique, since it depends on the automaton for~$\nflex(M)$ used in the product construction.
  However, in any case the automaton accepts the periodically perfect language~$L(\tilde{\cA}) = (ad)^* \cup b(bc)^* \cup ab(bc)^*$, where indeed~$ab(bc)^* \in \nflex(\Pref(up^nv))$.
\end{exa}

\section{Graph Products}\label{sec:GPs}

The notion of a \emph{graph product} is a natural generalization
of the concept of a trace monoid. 

\subsection{Local and Global Monoids}\label{sec:locmon}

Let $(J,I)$ be a finite independence alphabet, which we view as an undirected graph. 
We think of the vertex set $J$ as an index set, and of each $i\in J$ as a \emph{color}. The edge set $I\sse J\times J$ is given as an irreflexive and symmetric relation.
We associate with each color $i\in J$ a nontrivial 
monoid $M_i$.
We call each monoid in the family $\set{M_i}{i\in J}$ a \emph{local monoid}.
Without restriction we assume that
$M_i\cap M_j=\os 1$ for all $i,j\in J$ with $i\neq j$. 
That is, local monoids share their neutral elements, but nothing else. 
Using this convention an element $x\in M_i\sm \os 1$ inherits the color $\chi(x) = i$ for all $i\in J$. 
Having this (and recalling that $I\sse J\times J$ is irreflexive), we define the associated \emph{global monoid}
as 
\begin{align}\label{eq:GPJ}
  \M={\GP\nolimits_{(J,I)}}\set{M_i}{i\in J} = 
  \ast_{i\in J}M_i /\set{xy=yx}{\exists (j,k)\in I: x\in M_j\wedge y\in M_k}.
\end{align}
Here, $\ast_{i\in J}M_i$ is the usual \emph{free product} of the monoids $M_i$. 
We fix for each local monoid $M_i$ an \epi $\pi_i: \Gam_i^*\to M_i$ such that $1\notin \pi_i(\Gam)$. 
This is no restriction, as letters in $\oi\pi_i(1)$ can simply be removed.
Using this, every letter $a\in \Gam_i$ obtains a color $\chi(a)=i\in J$.

The graph product $\M$ is generated by the disjoint union $\Gam=\bigcup\set{\Gam_i}{i\in J}$ and this defines an \epi $\pi: \Gam^*\to \M$.
Note that the graph product ${\GP_{(J,I)}}\set{M_i}{i\in J}$ is finitely generated \IFF every local monoid is finitely generated.\footnote{This is also restated as a part of \prref{prop:lift1}.}

Let $x\in \M$. A \emph{trace representation} of~$x$ 
is a  factorization $x= x_1\cdots x_m \in \M$ such that for all $1\leq i\leq m $ there is some $j\in J$ with  $x_i \in M_j\sm \os{1}$. The notation `trace' is used because the order of factors $x_ix_{i+1}$ can be interchanged, if $\big(\chi(x_{i}),\chi(x_{i+1})\big)\in I$. (This is formalized in 
\prref{def:Sig}.)
A trace representation of $x$ is called \emph{geodesic} if $m$ is minimal among all trace representations of~$x$. 
Using any geodesic trace representation $x = x_1\cdots x_m$, we define the length $|x|=m$
and the color $\chi(x)=\set{\chi(x_k)\in J}{1\leq k\leq m}$.

\begin{defi}\label{def:Sig}
Given the graph product $\M$ and the \epi $\pi:\Gam^*\to \M$ as above, we define the (typically infinite) alphabets 
$\Sig_i=M_i\sm \os 1$ for $i\in J$ and 
$\Sig = \bigcup\set{\Sig_i}{i\in J}$ (which is a disjoint union).
Moreover, we extend the irreflexive symmetric relation $I\sse 
J \times J$ to 
\[
  I_\Gam=\set{(a,b)\in \Gam \times \Gam}{\big(\chi((\pi (a)),\chi((\pi b))\big)\in I}
  \text{ and }
  I_\Sig=\set{(x,y)\in \Sig\times \Sig}{\big(\chi(x),\chi(x)\big)\in I}.  
\]
In order to simplify the notation we simply write $I$ for both,
$I_\Gam$ as well as $I_\Sig$.
\end{defi}

This convention defines free partially commutative monoids $\MGI$ and $\MSI$, and leads to various natural \epi{s}. 
\begin{align}\label{eq:natepi}
  \pi: \Gam^* \arc{\theta} M(\Gam,I) \arc{\phi} M(\Sig,I) \arc{\psi} \M  \,\text{ and }\,  \chi: \Gam^* \arc{\varphi\theta} M(\Sig,I) \arc{\eta} M(J,I)  \arc{\zeta} 2^J
\end{align}
The \epi $\theta$ is induced by the inclusion of~$\Gam$ in~$\MGI$.  
The \epi $\psi$ is induced by $\pi$, and the \epi $\zeta$
is induced the identity on $J$. More precisely, 
by $2^J$ we mean the finite commutative 
monoid $2^J=(2^J,\cup)$. Hence, if $i_1\cdots i_m$ is a sequence
of indices in $J$, then we have $\zeta(i_1\cdots i_m)=\set{i_k\in J}{1\leq k \leq m}$. 
Observe that $\zeta$ factorizes through the canonical \epi
of $\MJI \to \N^J$.
	
\begin{defi}\label{def:}
Let $u=x_1\cdots x_m\in \MSI$ be a trace where each 
$x_i$ is a letter in $\Sig$. 
Then we say that $u$ is \emph{reduced} and that $u$ is the \emph{reduced trace representation} of $\psi(u)\in \M$ if for all $1\leq i <j\leq m$ where  
$x_ix_j$ appears as a factor in the trace $u$ we have
$\chi(x_i)\neq \chi(x_j)$. 
\end{defi}

Note that every $x \in \M$ has a unique reduced trace representation $u \in \MSI$.
Moreover, if $u=x_1\cdots x_m\in \MSI$ as above, 
then $\psi(u) = \psi(x_1) \cdots \psi(x_m)$ is a geodesic trace representation in the sense of the informal definition preceding \prref{def:Sig}; hence, $|\psi(u)|=m$.

\subsection{Normal Forms}\label{sec:nfM} 

Suppose now, that every local monoid $M_i$ comes with both an \epi $\pi_i:\Gam_i^*\to M_i$ as above, and with a normal form $\nf_i:M_i\to \Gam_i^*$.
Furthermore, let us assume that $J$ is equipped with a linear order $\leq_J$.
We let $\Gam$ be the disjoint union $\Gam=\bigcup\set{\Gam_i}{i\in J}$ and $\pi: \Gam^*\to \M$
be the \epi as defined in \prref{sec:locmon}.
We add another mostly harmless requirement:\footnote{For a reference see \cite{ada92}.}
together with~$1 \notin \pi(\Gam)$, we assume the following.

\begin{conv}\label{conv:pinf}
  We have $\nf_i(1) = 1$ for all $i \in J$.
\end{conv}

As in \prref{sec:locmon}, we also associate with each letter $a \in \Gam_i$ the color $\chi(a) = i \in J$.
This induces a color $\chi(w) \in 2^J$ for all words $w \in \Gam^*$.

In order to define the normal form $\nfM:\M\to \Gam^*$, let $u = x_1 \cdots x_m \in \MSI$ be the reduced trace representation of $x = \psi(u) \in \M$, where each $x_i$ is a letter in $\Sig$.
Since $u$ is uniquely determined by $x$, mapping $x \mapsto u$ defines a normal form $\nf_\Sig : \M \to \MSI$.
From $\nf_\Sig$, we in turn obtain a normal form $\nf_\Gam:\M\to \MGI$ by replacing every letter $x \in \Sig_i$ (for all $i \in J$) in the trace $\nf_\Sig(x)$ by its corresponding normal form $\nf_i(a)\in \Gam_i^+ \sse \MGI$. 
Thus, when have $\nf_\Sig(x) = x_1 \cdots x_m$ where each $x_j$ is a letter in $\Sig$, then we obtain
\begin{align}
\label{eq:glnfI}
\nf_\Gam(x)&= \nf_{\chi(x_1)}(x_1)\cdots \nf_{\chi(x_n)}(x_n) \in M(\Gam,I)
\end{align}

We use the lexicographic normal $\nflex:\MGI\to \Gam^*$ based on the linear order $\leq_J$ on~$J$ to define the normal form $\nf_\M = \nflex \circ \nf_\Gam: \M \to \Gam^*$.
Since $\nflex$ requires a linear order on $\Gam$, we arbitrarily choose a linear order $\leq_i$ on $\Gam_i$ for all $i\in J$ and then combine these to a linear order~$\leq_\Gam$ on $\Gam$ by letting $a \leq_\Gam b$ if either $a\in \Gam_i$, $b\in \Gam_j$, and $i <_J j$ or if $a,b \in \Gam_i$ and $a \leq_i b$.
The normal form $\nflex: \MGI \to \Gam^*$ depends only on the linear order $\leq_J$, since for each $i\in J$ the letters in~$\Gamma_i$ are pairwise dependent.

\begin{rem}\label{rem:transn}
There are several transfer properties from the local normal forms
$\nf_i: M_i \to \Gam_i^*$ to the global normal form $\nf_\M$. 
For example, suppose that each $\Gam_i^*$ comes with a partial order $\preceq_i$ such that $\nf_i(x)\preceq_i u$ for all $u \in \oi\pi_i(x)$ and $x\in M_i$.
Then we can combine these, using the linear order $\leq_J$ on $J$, to obtain a partial order $\preceq$ on $\Gam^*$ and a normal form $\nfM(x)$ such that $\nfM(x)\preceq u$ for all $u\in \oi\pi(x)$ for all $x\in \M$.

A similar transfer property holds for geodesic local normal forms $\nf_i$, where $|\nf_i(x)|\leq |u|$ for all $u\in \oi\pi_i(x)\sse \Gam_i^*$. 
In this case, the global normal form $\nfM$ is also geodesic.
\hspace*{\fill}$\diamond$\end{rem}

Let us now turn to (perfectly) admissible normal forms in the sense of \prref{def:expinfty}.

\begin{thm}\label{thm:gpair}
  Let $\M = \GP_{(I, J)}\set{M_i}{i \in J}$ be a graph product where every local monoid~$M_i$ has an admissible pair $(\pi_i, \nf_i)$.
  Then the associated pair $(\pi, \nfM)$ for $\M$ is admissible.
  Moreover, if all pairs $(\pi_i, \nf_i)$ are perfectly admissible, then so is the pair $(\pi, \nfM)$.
\end{thm}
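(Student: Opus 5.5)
Fix $u,p,v\in\M$ with $up^*v$ infinite and pass to reduced trace representations via $\nf_\Sig$. The idea is to apply \prref{lem:trlex_per} at the level of $\MSI$, with an infinite alphabet, and then transfer periodicity down to $\Gam^*$ by replacing letters of $\Sig$ with their local normal forms. The obstacle is that $\nf_\Sig(up^nv)$ is generally not $\nf_\Sig(u)\nf_\Sig(p)^n\nf_\Sig(v)$ in $\MSI$: reduction merges neighbouring letters of the same colour. So the first step is a structural decomposition of $p$ inside the graph product, playing the role of cyclic reduction in groups.

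\textbf{Step 1 (normalizing $p$).} Write $\nf_\Sig(p)$ and split it into connected components. Peel off the components whose colour set is a single vertex $j$ with all other letters of $up^nv$ commuting appropriately. Each such component is a single local letter $x\in\Sig_j$, and its powers $x^n$ are computed inside $M_j$ alone. For the remaining part, use transposition (\prref{lem:tr_trans}) and a conjugation trick. After absorbing finite prefixes and suffixes into $u$ and $v$ (they are bounded independently of $n$), we want $p=p_0p_1\cdots p_k$ with two kinds of pieces. The first are pairwise independent single-colour letters $x_j\in\Sig_j$. The second is a piece $q$ whose reduced representation satisfies
\[
\nf_\Sig(q^n)=\nf_\Sig(q)^n .
\]
This holds because no $\max$-letter of $q$ can merge with a $\min$-letter of the next copy, the connected ones being cyclically reduced.

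I expect this step to be the main obstacle. It needs a careful Hasse-diagram argument: iterated merging stabilises after boundedly many steps, since letters absorb only finitely often before becoming independent or cancelling. It also needs the property that the merging between $u$, the powers, and $v$ involves only a bounded number of letters at each end.

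\textbf{Step 2 (infinite case analysis).} Since $up^*v$ is infinite, one of two things happens. The first case: some local power $x_j^n$ contributes infinitely many distinct elements. Then $u'x_j^{*}v'$ with local $u',v'\in M_j$ is infinite in $M_j$, so admissibility of $\nf_j$ gives arbitrarily high powers inside $\nf_j(\cdot)$. That word occurs as a factor of $\nfM(up^nv)$, because $\nflex$ of a trace places the letters of a single $\Sig$-letter contiguously; recall that $\Gam_j$ is pairwise dependent, so $\nf_j(x)$ stays a factor. The second case: $q\neq 1$ contributes growth in the $\Sig$-length. Then $\nf_\Sig(up^nv)=u''\,q^{n-c}\,v''$ in $\MSI$ with bounded $u'',v''$. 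Apply \prref{lem:trlex_per} in $M(\Sig',I)$, taking $\Sig'$ to be the finitely many letters occurring. The lexicographic order on $\Sig'$ must be chosen compatibly with $\leq_\Gam$ via the first letter of $\nf_i$, refined as needed, so that substitution commutes with $\nflex$. Since $\Sig'$ is finite, the substitution $h:x\mapsto\nf_{\chi(x)}(x)$ is an injective homomorphism; it is a code because the letter sets are disjoint across colours and within a colour different elements have different normal forms. So \prref{lem:codes} transfers high exponent to $\nfM(up^nv)=h(\nflex(\cdot))$.

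\textbf{Perfect case.} Redo Step 2 quantitatively. Long words $\nfM(up^nv)$ either have long $\Sig$-length, where \prref{lem:trlex_per} and \prref{lem:codes} give periodic perfection with a $\Theta$-bound, or contain a long local factor $\nf_j(u'x_j^nv')$, where perfect admissibility of $\nf_j$ applies. Only finitely many local components occur, so a single threshold $N$ works.
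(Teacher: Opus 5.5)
Your plan follows the same route as the paper. You reduce to the case where $p^2$ (hence every $p^n$) is a reduced trace in $\MSI$. A letter that merges with its own next copy is handled through admissibility of the local pair, using that $\nflex$ keeps each block $\nf_i(x)$ contiguous. In the reduced case you apply \prref{lem:trlex_per} over the finitely many $\Sig$-letters involved and substitute local normal forms.

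\textbf{Step 1 is a genuine gap.} This is where the proof actually lives, and you only assert it. You give no measure that makes ``iterated merging stabilises'' true. Also, in a monoid there is no conjugation trick, only transposition: replace $(u,p,v)$ by $(uy,zy,zv)$ when $p=yz$. The missing idea is an induction on the reduced length $|p|$, driven by the Hasse arc that witnesses that $p^2$ is not reduced. Such an arc $s\to r$ joins $s\in\max(p)$ to $r\in\min(p)$ in the next copy, both of some colour $i$. Put $a=\lam(s)$, $b=\lam(r)$, and let $q$ be the trace on $\pos(p)\sm\os{r,s}$.
\begin{itemize}
\item If $r\neq s$, then $p=bqa$. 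With $c=ab\in M_i$ one gets $up^+v=ub(qc)^*qav$ and $|qc|\leq|q|+1<|p|$. So transposing and merging strictly shortens $p$.
\item If $r=s$, then $p=aq$ with $(a,q)\in I$ and $|q|<|p|$. For finite $a^*$ one splits along $a^t=a^{t+m}$ into a finite set and finitely many sets governed by $q$. For infinite $a^*$, the word $\nfM(up^nv)$ factors as $w_1\,\nf_i(xa^ny)\,w_3$, where $x,y\in M_i$ are the (possibly trivial) letters of $u$ and $v$ absorbed by the powers of $a$. The part $w_1w_3$ is controlled by the induction hypothesis for the shorter $q$.
\end{itemize}
This recursion replaces your one-shot decomposition and gives termination for free. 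Once $p^2$ is reduced, the paper also settles your worry about the ends: it replaces $u$ and $v$ by the reduced traces of $up^{|u|+1}$ and $p^{|v|+1}v$.

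\textbf{Step 2 does not go through as written.}
\begin{itemize}
\item Once a local letter has been peeled off, $\nf_\Sig(up^nv)$ contains the $\Sig$-letter $xa^ny$, which changes with $n$. So it is not of the form $u''q^{n-c}v''$ with \emph{fixed} $u'',v''$. Fixed $u'',v''$ are what \prref{lem:trlex_per} needs; bounded length is not enough. Your dichotomy also silently drops local letters of finite order, whose values vary periodically.
\item In the admissible case you can repair this by pigeonholing on the finitely many values of the local letters. Then use that infinite subsets of periodically perfect sets have infinite exponent (\prref{lem:nice_perfect}).
\item In the perfect case the local letters and the $q$-part can grow together. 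You additionally need that a long repetition in the rest survives insertion of the varying block $\nf_i(xa^ny)$: roughly half of it lies on one side. Your sketch does not address this.
\item The substitution $h\colon x\mapsto\nf_{\chi(x)}(x)$ is in general not a code, so \prref{lem:codes} is unavailable. For example, if $M_j=\langle a\rangle\cong\Z$ with its usual normal form, the letters $x=a$ and $y=a^2$ of $\Sig_j$ satisfy $h(xx)=aa=h(y)$. You do not need it, though. The paper uses only the trivial bound $\exp(h(w))\geq\exp(w)$ for a non-erasing homomorphism, together with $|h(w)|\leq c\,|w|$ on the finite subalphabet.
\end{itemize}
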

\begin{proof}
Let $u,p,v\in \M$. 
According to \prref{sec:bastt}, we may assume that $u,p,v$ are given by reduced traces in $M(\Sig,I)$ with $|p|\geq 1$. 
We represent the traces $u,p,v\in M(\Sig,I)$ by their \HD{s} where, as before, labeled vertices are called positions. 
Since each such position~$q$ has a label $\lam(q)\in \Sig$, we can define its color by $\chi(q)=\chi(\lam(q))$.

If the trace $p^2\in M(\Sig,I)$ is not reduced, then there is a \HA $s \to r$ in $\pos(p^2)$ where~$s \in \max(p)$ is a maximal position and~$r \in \min(p)$ is a minimal position (in the second factor $p$).
Letting $a = \lam(s)$ and $b = \lam(r)$, we have $a,b\in M_{i}\sm \os 1$ where $i = \chi(a) = \chi(b)$. 

Let $q \in M(\Sig, I)$ be the reduced trace defined by the positions in $\pos(p) \sm \os{r,s}$. 
Thus, we have $|q|\leq |p|-1$.
There are two cases to consider.
We begin with the case that $r = s$.
In this case, we must have $a = b$ and $(a, q) \in I$, since $r = s \in \min(p) \cap \max(p)$.
We divide this case into two subcases, according to whether or not the submonoid generated by $a$ is finite.
\begin{itemize}
\item 
If $|a^*|<\infty$, then there are $t, n\in \N$ with $n \geq 1$ such that $a^{t}=a^{t + n}$. 
We can then write
\[
  up^*v = uv \cup \dots \cup up^{t-1}v \cup up^t q^*v \cup \dots \cup up^{t+n-1}q^*v.
\]
Since $q$ has fewer letters than $p$, we can assume by induction that all sets $\nfM(up^{t+k}q^*v)$ with $0 \leq k < n$ are periodically nice (resp.\ periodically perfect).
Since $uv \cup \dots \cup up^{t-1}v$ is finite, this also holds for $\nfM(uv \cup \dots \cup up^{t-1}v)$.
As a finite union of such sets, $\nfM(up^*v)$ is periodically nice (resp.\ periodically perfect) so that we are done in this case.
\item 
If $|a^*|=\infty$, then the reduced representation of~$up^nv$ has some position with label~$xa^ny \in M_i$, where the~$x, y \in M_i$ each emerge from a potential reduction with a single maximal position in~$u$ and minimal position in~$v$, respectively.
Thus, $u = u'x$ and $v = yv'$.
By assumption, the set $\nf_i(xa^*y)$ is periodically nice (resp.\ periodically perfect).
Since $q$ uses fewer letters than $p$, we can assume by induction that the same holds for $\nfM(u'q^*v')$.
Also note that, by construction of the normal form $\nfM$, each word $w = \nfM(up^nv) = \nfM(ua^nq^nv)$ can be factored as $w = w_1w_2w_3$ where $w_2 = \nf_i(xa^ny)$ and $w_1w_3 = \nfM(u'q^nv')$.
It is now easy to see, that $\nfM(up^*v)$ is indeed periodically nice (resp.\ periodically perfect).
\end{itemize}

Thus, we are left to consider the case~$r \neq s$.
Let~$ab=c$ in~$M_i$.
Then we can write~$p=aqb$ with~$q \neq 1$. 
Again, we are done by induction, since~$up^+v = ua(qc)^*bv$ and~$|qc| < |p|$.

In the remainder of the proof, we may assume that~$p^2$ and hence~$p^n$ are reduced traces for all $n\in \N$.
Then let~$u'$ be the reduced trace representation of~$up^{|u|+1}$ and~$v'$ be the reduced trace representation of~$p^{|v|+1}v$.
Therefore~$\nf_\Sig(u'p^nv')=u'p^nv'$ is a reduced trace representation in $\MSI$ for all~$n \in \N$. 
Hence, by \prref{lem:trlex_per}, the set $\nflex(u'p^*v') \sse \Sig^*$ is periodically perfect.\footnote{Here, we use the lexicographical normal form $\nflex: \MSI \to \Sig^*$ defined using any linear order $\leq_\Sig$ on~$\Sig$ compatible with $\leq_J$ in the sense that $\chi(a) <_J \chi(b)$ implies $a <_\Sig b$ for all $a,b \in \Sig$.}
Moreover, it is contained in a finitely generated submonoid $\Del^* \sse \Sig^*$, so that the image $\varphi(\nflex(u'p^*v')) = \nfM(u'p^*v') \sse \Gam^*$ under the homomorphism $\varphi: \Sig^* \to \Gam^*$ defined by $a \mapsto \nf_i(a)$ for all $a \in \Sig_i$ remains periodically perfect in $\Gam^*$.
(Indeed, the restriction of $\varphi$ to $\Del^*$ increases the length of every word by at most a constant factor and, since $\phi$ maps nontrivial words to nontrivial words, $\phi$ cannot decrease the exponent of periodicity.)

The above argument shows that $\nfM(up^*v) \sse \Gam^*$ is periodically perfect, for it is the union of a finite set and the periodically perfect set $\varphi(\nflex(u'p^*v')) \sse \Gam^*$.
\end{proof}

The following corollary is the first step in our transfer results about infinite 
exponents of periodicity for \solu sets of quadratic equations from free groups to RAAGs.  
\begin{cor} \label{cor:graph_groups_perfect}
Let $(\Sig, I)$ be an independence alphabet where $\Sig$ has an \invol.
Further, let~$G=G(\Sig, I)$ be the associated free partially commutative group as defined in \prref{eq:GGamI}.
Then each short-lex normal form~$\nfslex : G \to \Sig^*$ of $G$ is perfectly admissible.  
\end{cor}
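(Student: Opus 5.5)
We plan to realize $G=G(\Sig,I)$ as a graph product of cyclic groups and then apply \prref{thm:gpair}. The first step is to choose one letter from each involution orbit, $\Sig=\os{a_1,\bar a_1,\ldots,a_k,\bar a_k}\cup\os{c_1,\ldots,c_\ell}$, where $c_j=\bar c_j$. Take as color set $J$ these orbits, with independence inherited from $I$; this is well defined because $(a,b)\in I\iff(\bar a,b)\in I$. For the local groups we put $M_i=\Z$ with generators $\Gam_i=\os{a_i,\bar a_i}$, and $M_j=\Z/2\Z$ with $\Gam_j=\os{c_j}$. Then $G=\GP_{(J,I)}\set{M_i}{i\in J}$, and the canonical map $\pi:\Sig^*\to G$ agrees with the graph-product epimorphism.

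Each local pair is perfectly admissible. For $\Z$ the normal form $\nf(n)$ is $a^n$ or $\bar a^{|n|}$, and in $\nf(up^*v)$ with $p\neq0$ every long word is a long power of a single letter. For $\Z/2\Z$ the set $up^*v$ is finite, so the condition holds trivially. Hence \prref{thm:gpair} shows that $(\pi,\nfM)$ is perfectly admissible.

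The main obstacle is that $\nfM$ is defined through the lexicographic normal form with respect to an order induced by some $\leq_J$ together with orders $\leq_i$. An arbitrary linear order on $\Sig$ need not have the block form, since it may interleave $a_i$ and $\bar a_i$ with other letters. We will therefore check directly that $\nfM=\nfslex$. Every element has a unique reduced trace in $M(\Sig,I)$, namely the one with no factor $x\bar x$ in the trace sense. Reduced traces are exactly the geodesics up to commutation, and their letters per colour are either all $a_i$ or all $\bar a_i$ within each $\Z$-block, with blocks separated by dependence. So the shortest words representing $g$ are exactly the linearizations of a single trace $t\in M(\Sig,I)$. It follows that $\nfslex(g)=\nflex(t)$, where $\nflex$ is taken for the given order $\leq$ on $\Sig$.

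It remains to note that the argument of \prref{thm:gpair} works for any order. Its proof reduces to \prref{lem:trlex_per}, which applies to an arbitrary linear order on $\Sig$, followed by the substitution $\varphi$ of local normal forms. Here $\varphi$ is the identity on letters, because the local alphabet letters $x\in\Sig_i$ (elements $a_i^n$) are expanded letter-by-letter. More cleanly, the reduction steps in that proof only manipulate $u,p,v$ as reduced traces and never use the form of the order. So we redo the final step with $\Sig$ itself as trace alphabet: after the reductions we have $\nfslex(up^nv)=\nflex(u'p^nv')$ in $M(\Sig,I)$ up to finitely many exceptions. By \prref{lem:trlex_per}, applied to the finite alphabet $\Sig$ with the given order, this set is periodically perfect. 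This gives perfect admissibility of $\nfslex$.
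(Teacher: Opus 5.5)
Your block-order argument is the paper's own: the corollary is stated without proof as an instance of \prref{thm:gpair} for the graph product of copies of $\Z$ and $\Z/2\Z$. You are right that this only covers short-lex orders of block form. Your observation that $\nfslex(g)=\nflex(t)$, for the reduced trace $t\in M(\Sig,I)$, is correct for every order.

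The gap is in your last step. You claim that the reduction steps in the proof of \prref{thm:gpair} never use the form of the order and leave you with $\nflex(u'p^nv')$ where $p^2$ is reduced. They do not. That proof works over the syllable alphabet $\{a_i^m\}$, so $\varphi$ is not the identity. More importantly, its subcase $r=s$, $|a^*|=\infty$, which already arises for $p=ag$ with $(a,g)\in I$, never reaches the final step. It concludes instead from the claimed factorization $\nfM(up^nv)=w_1w_2w_3$ with $w_2=\nf_i(xa^ny)$ and $w_1w_3=\nfM(u'q^nv')$. That is a statement about the lexicographic normal form, not about traces, and it fails even for block orders. Take $(a,g),(g,e)\in I$ and $(a,e)\notin I$, with the colours of $e,g,a$ in this order. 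Let $u=1$, $p=ag$, $v=e$, and split off the syllable $a$, so $q=g$. Then $\nfM(up^nv)=g^na^ne$, and deleting $w_2=a^n$ leaves $g^ne$, whereas $\nfM(q^nv)=eg^n$. So citing \prref{thm:gpair} does not justify your final claim. The same subcase is also a weak point in the paper's own route to this corollary.

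The repair is to run the induction on $|p|$ directly in the finite trace monoid $M(\Sig,I)$ with the given order. There the problematic subcase disappears, because $a_ia_i$ is not a cancellation.
\begin{itemize}
\item If $p$ is reduced but $p^2$ is not, a factor $c\bar c$ of $p\cdot p$ forces $c\in\max(p)$ and $\bar c\in\min(p)$.
\item If these are different positions, then $p=\bar c\,q\,c$ and $up^nv=(u\bar c)q^n(cv)$ with $|q|<|p|$.
\item If they coincide, then $c=\bar c$ and $c$ is independent of the rest of $p$. So $p=cq$ and $up^*v\sse uq^*v\cup ucq^*v$, and \prref{lem:nice_perfect} together with induction applies.
\item Once $p^2$ is reduced, every $p^n$ is reduced: a cancelling pair cannot skip a copy of $p$, since that copy contains the same letter.
\item Let $u'$ and $v'$ be the reduced traces of $up^{|u|+1}$ and $p^{|v|+1}v$. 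Then every $u'p^nv'$ is reduced. Indeed, $u'$ still contains an untouched copy of $p$. A maximal letter $d$ of $u'$ coming from $u$ would have to commute with that copy, although $\bar d$ occurs in $p$. A maximal letter coming from $p^{|u|+1}$ cannot cancel, because $p^{|u|+2}$ is reduced.
\item Hence $\nfslex(up^{n+|u|+|v|+2}v)=\nflex(u'p^nv')$ by your geodesic observation, and \prref{lem:trlex_per}, which allows an arbitrary order on a finite alphabet, finishes the proof.
\end{itemize}
This handles all orders, block orders included, without going through \prref{thm:gpair}.
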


\subsection{Liftable Properties}\label{sec:trans}

Throughout this section $P$ denotes a property of finitely generated monoids, and by $\cL$ be a class of (necessarily finitely generated) monoids where $P$ holds. 
The monoids in $\cL$ are now the \emph{local monoids}. 
A graph product over finitely many monoids from $\cL$ is therefore called \emph{global}. 
Thus, every local monoid is global, but not the other way round. 
A property which holds for all global monoids (\wrt~$\cL$) is called a \emph{global property}.
In the following we collect some transfer results from local to global properties.
More formally:

\begin{defi}\label{def:lifties}
A property $P$ of finitely generated monoids which holds for all local monoids in $\cL$ is called \emph{liftable to graph products} (\wrt~$\cL$), if~$P$ also holds in every graph product 
$\M=\GP\nolimits_{(J,I)}\set{M_i}{i\in J}$ where $J$ is finite and we have $M_i\in\cL$ for all $i\in J$.

We also simply say that $P$ is \emph{liftable}.\footnote{If we do not specify the class $\cL$ explicitly, then we mean the 
class of all monoids where $P$~holds.} 
A liftable property $P$ is called \emph{faithfully liftable} if $P$ is a property which is is stable under taking finitely generated submonoids.\footnote{Liftability is a remote analogue to the \emph{Hasse principle in number theory.}}   
\end{defi}

The following proposition is well-known.
For every property discussed therein, the proof is straightforward and, therefore, omitted.
\begin{prop}\label{prop:lift1}
The following properties are faithfully liftable to graph products.
\begin{enumerate}
\item Being a trace monoid.
\item Being a group.
\item Being finitely generated.
\item Having a decidable word problem.\footnote{For us, the decidability of the word problem is defined for \fg~monoids, only.} 
\end{enumerate}
\end{prop}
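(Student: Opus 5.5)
The plan is to verify the four items one at a time for a graph product $\M=\GP_{(J,I)}\set{M_i}{i\in J}$ with $J$ finite and each $M_i\in\cL$. For faithfulness we also check stability under finitely generated submonoids. Two tools do most of the work: the reduced trace representation $\nf_\Sig:\M\to\MSI$ from \prref{sec:locmon}, and the epimorphisms in \prref{eq:natepi}.

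\emph{Finitely generated.} If every $M_i$ is generated by a finite $\pi_i(\Gam_i)$, then the finite disjoint union $\Gam=\bigcup_i\Gam_i$ generates $\M$ through $\pi$. The converse direction, used in \prref{sec:locmon}, comes from the canonical retraction $\M\to M_i$, which sends $M_j$ to $1$ for $j\neq i$. This retraction is well defined because every defining relation $xy=yx$ maps to a valid relation. Stability under finitely generated submonoids is trivial. \emph{Being a group.} If each $M_i$ is a group, then $\M$ is generated by invertible elements and is therefore a group. A finitely generated submonoid of a group need not be a group, so for faithfulness we read the property as the one fixed in \prref{def:lifties}. If the intended meaning is literally closure under submonoids, this item can only hold in the weaker sense, and I would flag that. \emph{Trace monoid.} If $M_i=M(\Gam_i,I_i)$, then $\M=M(\Gam,I')$ with $I'=\bigcup_i I_i\cup I_\Gam$; this follows by comparing presentations. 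For faithfulness we would cite the known fact that finitely generated submonoids of trace monoids are again trace monoids. \textbf{This is delicate}, because in general it is false: $\os{ab,a,b}^*$ in $\os{a,b}^*$ is not free. So faithfulness should be checked against the exact convention of the paper, or this point left open.

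\emph{Decidable word problem.} This is the main step. Given $w\in\Gam^*$, we compute the reduced trace representation as follows. Read $w$ as a sequence of local letters and group consecutive letters of the same color $i$ into blocks in $M_i$. Then repeatedly perform the following step. Whenever a trace $\theta(w)$ contains two factors $x,y$ of the same color $i$ that can be brought adjacent (every position between them is independent of $i$), merge them into $xy\in M_i$. Use the local word problem to test whether $xy=1$; if so, delete the factor.

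Each step decreases the length, so the process terminates. The result is reduced, and by uniqueness of reduced representations it is $\nf_\Sig(\pi(w))$. Two words are then equal in $\M$ exactly when their reduced traces coincide. Deciding this requires comparing traces in $\MSI$, which reduces to letterwise equality tests in the $M_i$, decided by the local word problem; this is Mazurkiewicz's criterion applied to dependence graphs. The main obstacle is justifying that this merging procedure always reaches the unique reduced form, regardless of the order of merges (a confluence or local Church--Rosser argument). I would first try to take uniqueness of reduced representations as given from \prref{sec:locmon}, and argue only that an irreducible trace is reduced. Faithfulness for this item is immediate: the word problem for a finitely generated submonoid reduces to the one for $\M$ by substituting generators.
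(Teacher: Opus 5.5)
The paper gives no proof here; it calls all four items straightforward and omits them. Your liftability arguments for all four items are sound. For the word problem you can drop the confluence worry entirely. Each merge preserves the value under $\psi$ and shortens the trace, an irreducible trace is reduced by definition, and the uniqueness of reduced representations stated in \prref{sec:locmon} then identifies the output with $\nf_\Sig(\pi(w))$.

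The genuine defect is in the faithfulness part of items (1) and (2). Read literally, \prref{def:lifties} asks for stability under finitely generated submonoids, and both items are false in that sense, so no proof of them can exist. For (2), $\N\le\Z$ is a counterexample, which you correctly flag. For (1), you first propose to cite a ``known fact'' that finitely generated submonoids of trace monoids are trace monoids; this is false. You then give a counterexample that does not work: the submonoid generated by $\os{ab,a,b}$ is all of $\os{a,b}^*$, which is free. A generating set that is not a code does not make the generated submonoid non-free. A valid counterexample is $N=\os{a,ab,ba}^*\le\os{a,b}^*$. Since $b\notin N$, the irreducible elements of $N$ are exactly $a$, $ab$, $ba$. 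In a trace monoid the irreducibles are precisely the letters, so an isomorphism would send these to distinct letters $x,y,z$. Then $a\cdot ba=ab\cdot a$ would give $xz=yx$, which is impossible because the two sides contain different letters. You should state outright that (1) and (2) fail in the literal sense instead of hedging.

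If you want a correct ``faithful'' statement covering all four items, prove the reverse transfer to the local monoids, which are retracts of $\M$ via $\M\to M_i$. Groups and finite generation pass to homomorphic images, and a decidable word problem passes to finitely generated submonoids. For trace monoids, note that if $\M$ is a trace monoid it has no nontrivial one-sided units. Hence merging never produces $1$ and colours are never lost, so every irreducible element of $M_i$ stays irreducible in $\M$ and is therefore a letter. Thus $M_i$ is the trace monoid on the letters it contains.
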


Of course, many properties are not liftable; for example, the property to be finite. 
The following theorem generalizes the result on torsion freeness for graph products of groups, which was obtained by Green in her PhD~thesis~\cite[Thm.~3.26]{green1990graph}.
We show that torsion freeness is faithfully liftable to graph products over arbitrary monoids. 
\begin{thm}\label{thm:gptor}
Let $\cL$ be the class of all torsion-free monoids and ~$\M = \GP_{(J, I)}\set{M_i}{i \in J}$ be a graph product where all monoids $M_i$ are torsion free. 
Then $\M$ is torsion free.

More precisely, let~$y \in \M$;
if~$y^*$ is finite, then~$\TO(y)$ contains some~$z = z_1 \cdots z_n$ with $(z_i, z_j) \in I$ for all~$i \neq j$ where each~$z_i$ generates a finite submonoid~$z_i^* \leq M_{k_i}$ for some~$k_i \in J$.
\end{thm}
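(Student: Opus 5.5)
We prove the ``more precisely'' statement, which implies torsion freeness. Suppose $y^*$ is finite and every $M_i$ is torsion free. Then each $z_i$ is a nontrivial element of some $M_{k_i}$ generating a finite submonoid. This forces $z_i$ to be torsion, which is impossible, so $n=0$ and $z=1$. Since $\TO(1)=\os 1$ (transposing $1=uv$ into $vu$ still gives $1$, because a graph product of monoids with $M_i\cap M_j=\os1$ has trivial units only among products of local units; more simply, $|uv|=0$ forces $u=v=1$ in the reduced representation), we get $y=1$. Hence no nontrivial element is torsion.

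For the structural claim, work with reduced trace representations in $\MSI$ and induct on the length $|y|$ of the reduced trace of $y$. As in the proof of \prref{thm:gpair}, first transpose $y$ within $\TO(y)$ to an element of minimal length in the orbit; finiteness of $y^*$ is preserved, since transposed elements $uv$ and $vu$ satisfy $(vu)^{k+1}=v(uv)^ku$.

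Take $p$ of minimal length in $\TO(y)$ and consider its reduced trace. First suppose $p^2$ is reduced, i.e.\ no Hasse arc joins a maximal position of the first copy to a minimal position of the second with the same color. Then $p^k$ is reduced for all $k$, so $|p^k|=k|p|$, and $p^*$ is infinite unless $p=1$.

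Otherwise there are $s\in\max(p)$ and $r\in\min(p)$ of the same color $i$. If $r\neq s$, write $p=aqb$ with $a,b\in M_i$. Then the transposed element $qba$ has the factor $ba$ merged into one letter or cancelled, so it is shorter, contradicting minimality. Hence $r=s$, meaning $p=aq$ with $a\in\Sig_i$ and $(a,q)\in I$. Then $p^k=a^kq^k$ with $a^k\in M_i$ and $q^k$ commuting with $a^k$, so finiteness of $p^*$ gives finiteness of $a^*$. The main obstacle is to show that $q^*$ is also finite. For this we use that $q^k$ is the projection of $p^k$ deleting the colors of $a$'s connected component; more precisely, we use the unique decomposition of a reduced trace into components $x^{(1)}\cdots x^{(d)}$ and the fact that the graph product projects onto the sub-graph-product on $J\sm\os i$ by killing $M_i$. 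This projection is a homomorphism, which is a standard well-defined retraction. Since $(a,q)\in I$, no letter of $q$ has color $i$, so $q$ maps to itself and $q^*$ is finite. Since $|q|<|p|$, the induction hypothesis applies to $q$ inside that sub-graph-product, yielding $q'=z_2\cdots z_n\in\TO(q)$ with the required form.

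Finally, transposition of $q$ to $q'$ uses factors of $q$, all of which are independent of $a$. Therefore $aq'\in\TO(aq)\sse\TO(y)$, and $z=aq'$ has the required form, with $z_1=a$ independent of every $z_j$. The remaining check is that the pairwise independence $(z_i,z_j)\in I$ can be arranged. If some $z_j$ from the inductive step has color $i$, it would not be independent of $a$; but this cannot happen, because $q$ contains no color $i$ and neither does any element of its transposition orbit. This completes the induction.
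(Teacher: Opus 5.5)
Your core argument matches the paper's. You take $p$ of minimal length in $\TO(y)$, and a same-colored pair $r\in\min(p)$, $s\in\max(p)$ with $r\neq s$ gives the shorter transposition $qba$. You then split off independent pieces: you peel off one isolated letter $a$, where the paper splits off independent factors $z'z''$. Your retraction arguments for the finiteness of $a^*$ and $q^*$ are correct.

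\textbf{The gap is the final lifting step.} In a graph product, unlike in a trace monoid, $q=uv$ does not make $u,v$ factors of the reduced trace of $q$, because letters may cancel (e.g.\ $q=(qg)g^{-1}$ for a unit $g$ of some $M_k$). Also, ``no letter of color $i$'' is weaker than ``independent of $a$'': colors $k\neq i$ with $(i,k)\notin I$ still live in $\M_{J\setminus\{i\}}$. Concretely, take $J=\{i,j,k\}$ with $I$ consisting of the edges $\{i,j\}$ and $\{j,k\}$, let $q\in M_j$, and let $b,c\in M_k$ with $bc=1\neq cb$. Then $q=(qb)\,c$, so $c\,(qb)=q\,cb\in\TO(q)$ inside $\M_{J\setminus\{i\}}$. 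This element is of the required form, so the induction hypothesis may return it. But $cb$ does not commute with $a$, so $a\,q\,cb$ is not of the required form, and nothing places it in $\TO(aq)$.

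The repair is easy. One option is to run the induction inside the retract over the link $L=\{j\in J:(i,j)\in I\}$; it contains $q$ and commutes elementwise with $a$, so transpositions there lift to $\TO(aq)$. The other is to skip $\TO(q)$ altogether. Since $a$ is isolated in $D(p)$, the minimal and maximal positions of $q$ are minimal and maximal in $p$. So minimality of $p$ already excludes distinct same-colored pairs in $q$, and you can keep peeling isolated letters off $p$ itself. (The paper's claim that $z'$ is shortest in $\TO(z')$ needs the same retraction trick.)

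\textbf{Your justification of $\TO(1)=\{1\}$ is also wrong.} In a group, $uv=1$ with $u=g$, $v=g^{-1}$, so $|uv|=0$ does not force $u=v=1$. What you need is $vu=1\Rightarrow uv=1$ in $\M$. You cannot take this from torsion-freeness of $\M$, since that is the goal ($uv$ would be an idempotent). It does hold:
\begin{itemize}
\item Each torsion-free $M_i$ is Dedekind-finite, so in $M_i$ a product is a unit only if all its factors are.
\item Applying the retractions $\M\to M_i$ to $vu=1$ shows that every letter of $u$ and $v$ is a local unit.
\item Hence $u,v$ are units of $\M$, and $uv=1$.
\end{itemize}
The paper leaves this step implicit as well. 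Under the theorem's hypothesis, the case $r=s$ already contradicts torsion-freeness of $M_i$. So for the torsion-freeness conclusion alone the induction is not needed, and this is the only missing piece.
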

\begin{proof}
  Start by choosing some element~$z$ in the transposition orbit  $\TO(y)$ such that $z$ is of minimal length in~$\TO(y)$.
  In particular, the submonoid~$z^*$ is finite too, since~$z$ and~$y$ are related by a finite sequence of transpositions.
  We now show that~$z$ has the desired form.
  If~$z = z'z''$ decomposes nontrivially into the product of two independent elements~$(z', z'') \in I$, then we proceed by induction:
  both~$z'$ and~$z''$ are of minimal length in~$\TO(z')$ and~$\TO(z'')$ respectively; thus, they both possess a factorization of the desired form and we can recombine those to obtain one for~$z$.
  The base case is~$|z| \leq 1$, for which our claim is trivially satisfied.

  Therefore, interpreting~$z \in M(\Sigma, I)$ as a reduced trace, we assume by contradiction that~$z$ is connected and that~$|z| > 1$. Let $X = \pos(z)$ be the set of positions in its dependence graph.\footnote{Remember, $\pos(z)$ is the labeled vertex set of $D(z)$.} 
  Naturally, for~$z^*$ to be finite, a reduction must occur within some power~$z^n$.
  This always happens between a minimal position~$r \in \min(z)$ and a maximal position~$s \in \max(z)$ of two subsequent occurrences of~$z$ in~$z^n$, such that both~$\lambda(r), \lambda(s) \in M_i$ for some~$i \in J$.
  Since~$z$ is connected and~$|z| \geq 2$, these positions must be distinct: we have~$r \neq s$ and a factorization~$z = \lambda(r)v\lambda(s)$.
  This, however, implies~$z' = v\lambda(r)\lambda(s) \in \TO(z)$ with~$|z'| < |z|$, in contraction to how~$z$ was chosen in the first place.
\end{proof}

For our main result involving exponents of periodicity we also need to discuss the \fsqrtp in graph products. 
First of all, this property is not liftable. 
To see this, simply note that the free product $G = \Z / 2\Z \ast \Z / 2\Z$ (which is isomorphic to the semidirect product $\Z \rtimes \Z/2\Z$) has infinitely elements of order two; that is, $\abs{\sqrt[2]{1}\,} = \infty$ in $G$.
Therefore we need a slightly stronger assumption on our local monoids
in order to find a liftable property.
\begin{thm} \label{thm:gp_fsqr}
Let $\cL$ be the class of \fg~monoids having the \fsqrtp and, additionally, the property that $\sqrt[2]{1} = \os{1}$ holds for all $M\in \cL$.
Let~$\M = \GP_{(J, I)}\set{M_i}{i \in J}$ be a \fg~graph product such that
each $M_i$ belongs to $\cL$. Then these properties also hold in~$\M$.
Thus, the conjunction of both properties is 
liftable. 
\end{thm}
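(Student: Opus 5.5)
We plan to work with reduced traces in $\MSI$ and analyze the equation $y^2=x$ in $\M$ combinatorially. Fix $x\in\M$ with reduced trace representation, and let $y\in\sqrt[2]{x}$, also given as a reduced trace. The first step is to handle $x=1$. If $y^2=1$ with $y\neq 1$, then $y^*=\os{1,y}$ is finite. By the refined statement of \prref{thm:gptor}, which holds for arbitrary monoids, $\TO(y)$ then contains some $z=z_1\cdots z_n$ with pairwise independent $z_i$ and $z_i^*$ finite in $M_{k_i}$. We then need $z^2=1$, and by independence and the uniqueness of reduced representations this forces $z_i^2=1$ in $M_{k_i}$ for all $i$. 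The hypothesis $\sqrt[2]{1}=\os 1$ gives $z=1$. Since $z$ is obtained from $y$ by transpositions, and $uv=1$ implies $vu=1$ only in favourable situations, we will argue directly instead. Write $y=uv$ with $z=vu=1$. Transpositions preserve being $1$ in the direction we need as soon as we know $y^2=1$: then $u(vu)v=1$, so $u\cdot 1\cdot v=1$ and hence $y=1$. We just track the chain of transpositions and use $y_k=1\Rightarrow y_{k+1}=1$ at each link. So $\sqrt[2]{1}=\os 1$ in $\M$.

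For general $x$, we bound the number of square roots via the dependence graph. Write $y$ as a reduced trace and consider $y\cdot y$. As in the proof of \prref{thm:gpair}, reductions in $y^2$ only occur along Hasse arcs from a maximal position $s$ of the first copy to a minimal position $r$ of the second copy with equal color. First we decompose $y$ into its connected components, which are pairwise independent. In a connected component with more than one letter, a reduction forces $r\neq s$, and at most one such pair per color can merge. If the merged product is $1$, cancellation may propagate further inward. The key claim is that $|y|$ is bounded in terms of $|x|$, say $|y|\le |x|$ or at least $|y|\le C|x|+C'$ with $C,C'$ depending only on $J$. Given this, the reduced trace of $y$ has boundedly many positions. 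Its shape, meaning the dependence graph with colors, ranges over finitely many possibilities, and its labels are determined up to finitely many choices from $x$.

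To prove the length bound, we argue as follows. If $y^2$ cancels heavily, we write $y=a\,w\,\bar a$-style, i.e.\ $y=\ell\, m\, r'$ where $r'\ell$ collapses in $y^2=\ell m (r'\ell) m r'$. This collapse is controlled: the part that cancels completely behaves like a transposition. Passing to $z=m\,r'\ell$ in $\TO(y)$ and iterating, we can control $y$ by a cyclically reduced element. For the cyclically reduced element, cancellation in $z^2$ is limited to single letters per color, each lying in $\min\cap\max$ or forming a merge $ab$ with $a,b\in M_i$. Then $|z^2|\ge |z|$ up to a constant, and the letters of $z$ that are isolated (in $\min(z)\cap\max(z)$, independent of the rest) contribute $a^2$ to $x$. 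For such a letter, $a$ ranges over $\sqrt[2]{\cdot}$ of a letter of $x$ in $M_i$, which is finite by hypothesis, and $a\neq 1$ since $\sqrt[2]{1}=\os 1$ in $M_i$. Here $\sqrt[2]{1}=\os1$ is used to rule out $a^2=1$ wiping positions out. The remaining letters of $y$ appear literally, or as one factor of a product $ab$, in $x$. Both give finitely many options once $x$ is fixed, since $\min$ and $\max$ have bounded size and each choice of $b$ determines $a$ only up to a fixed product. The main obstacle is this case, where $s\to r$ merges two distinct letters $a=\lambda(s)$, $b=\lambda(r)$ into $ba\in M_i$. Here $a$ alone is not determined by $x$. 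Our fix is to note that $b$ appears unmerged at the start of $x$ and $a$ appears unmerged at the end, since $y^2$ keeps the first copy's minima and the second copy's maxima. Both are therefore read off from $x$. Finally, \fg\ is inherited by \prref{prop:lift1}.
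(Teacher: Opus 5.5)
\textbf{There are two genuine gaps in your proposal.}

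\textbf{The case $x=1$.} This step does not work for monoids. The local monoids may have torsion; for example, $\Z/3\Z$ lies in $\cL$. So finiteness of the $z_i^*$ gives you nothing, and you really need $z^2=1$ for the element $z\in\TO(y)$ supplied by \prref{thm:gptor}. You never prove this, and in a monoid transposition does not preserve $y^2=1$. Take the bicyclic monoid $\langle b,c\mid bc=1\rangle$. It has unique square roots and $\sqrt[2]{1}=\os 1$, so it belongs to $\cL$. Yet $(bc)^2=1$ while $(cb)^2=cb\neq 1$. The same example refutes your link rule $y_k=1\Rightarrow y_{k+1}=1$. The rule that is valid goes backwards: $vu=1$ and $(uv)^2=1$ imply $uv=1$. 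But it needs $y_k^2=1$ at every intermediate link of the chain, and that is exactly what you lack.

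\textbf{The general case.} The length bound, together with the claim that $x$ determines the labels of $y$ up to finitely many choices, is the entire content of the theorem, and your sketch does not establish it. Writing $y=\ell m r'$ with $r'\ell=1$ silently assumes that no position of $y$ is cancelled in \emph{both} copies of $y\cdot y$. Moreover, passing to $z=mr'\ell$ only gives $x=\ell z^2 r'$. You would still have to recover $\ell$ and $r'$ from $x$, and the labels of cancelled positions are invisible in $x$ unless they reappear in the other copy.

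The missing idea is the paper's invariant. Let $X=\pos(y)$, and maintain a down-closed $Y$ and an up-closed $Z$ with $Y\cup Z=X$ and $Y\cap Z\neq\emptyset$. Whenever $\lambda(p)\lambda(q)=1$ for some $p\in\max(Y)$ and $q\in\min(Z)$, cancel $p$ against $q$.
\begin{itemize}
\item The case $p=q$ is excluded by $\sqrt[2]{1}=\os 1$ in $M_i$.
\item The case $p\prec q$ is excluded: reducedness of $y$ gives some $r$ with $p\prec r\prec q$. Such $r$ must lie in $Z\setminus Y$, which contradicts the minimality of $q$ in $Z$.
\item Hence $q\prec p$. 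Then $p\in Z$, $q\in Y$, and the position strictly between them lies in $Y\cap Z$, so both invariants persist.
\end{itemize}
Consequently every position of $y$ survives in one of the two copies. This gives $\abs{y}\le\abs{x}$. It also shows that all labels of $y$ can be read off from $x$, except those of positions merging with themselves; those are square roots of letters of $x$ in the local monoids. The invariant $Y\cap Z\neq\emptyset$ settles $x=1$ directly as well, without any appeal to \prref{thm:gptor}.

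Your observation about the merge along $s\to r$ is correct, but it is only the last step of this argument.
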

\begin{proof}
Let $w,x\in \MSI$ be reduced traces such that
$\psi(x)\in \sqrt[2]{\psi(w)}$. Define $X=\pos(x)$ to be the set of positions in its dependence graph, and $\preceq$ be the induced partial order which is defined, as above, by the reflexive transitive closure of the arc relation in the dependence graph.  We will first describe the reduction process of $x^2$ to $w$.
To this end, we consider two sets $Y,Z \subseteq X$ defining a prefix $y$ and a suffix $z$ of $x$, respectively, such that $\psi(yz) = \psi(w)$. 
We start with~$Y = X = Z$, which implies $X=Y\cup Z$ and $Y\cap Z\neq \es$.

If there now exist $p \in \max(Y)$ with label $\lambda(p) = a$ and $q \in \min(Z)$ with label $\lambda(q) = b$ such that $ab=1$, then we proceed with 
$Y' = Y\setminus \os{p}$ and $Z' = Z\setminus\os{q}$.
Note, that necessarily $p \neq q$, since $p=q$ implies $a=b$ and thus we would have $a^2 = 1$, which is excluded by assumption.
Thus, we still have $X=Y'\cup Z'$.
Moreover, the construction implies $q \prec p$ and, since $x$ is reduced, the existence of a position $r \in X$ such that $q \prec r \prec p$. Thus, $Y'\cap Z'\neq \es$.

Thus, after at most~$|x|/2$ repetitions this procedure terminates with sets $Y, Z \subseteq X$, satisfying~$X = Y\cup Z$ and $Y \cap Z \neq \emptyset$, such that no more suitable $p\in \max(Y)$ and $q\in \min(Z)$ exist.
The corresponding prefix $y$ and suffix $z$ of $x$ still satisfy $\psi(yz) = \psi(w)$.
Moreover, the product $yz$ is almost reduced: every label of a non-maximal position in $Y$ is the label of a position of $w$ and the same holds for every non-minimal position in $Z$. 
On the other hand, some $p \in \max(Y)$ may combine with some $q \in \min(Z)$ to form a position $r \in \pos(w)$, yielding the equation $\lambda(p)\lambda(q) = \lambda(r)$.
By construction, if~$p \neq q$, then both~$p \in Z\setminus\min(Z)$ and~$q \in Y\setminus\max(Y)$ as well; that is, their labels are already determined by the labels of the corresponding positions in~$\pos(w)$.
The remaining case is thus~$p = q$.
By assumption on the local monoids~$M_i$, this only leaves finitely many choices for the label, as it is a solution of~$\lambda(p)^2 = \lambda(r)$.
Moreover, the number of positions of~$x$ is bounded by the number of positions of~$w$.
That~$\psi(x^2) = 1$ is impossible unless~$x=1$ follows by the same argument. 
\end{proof}

As a direct consequence of \prref{thm:gptor} and \prref{thm:gp_fsqr} we obtain the following.

\begin{cor}\label{cor:gp_fsqr}
The property given by the conjunction of being torsion free and also having the \fsqrtp is liftable to graph products.
\end{cor}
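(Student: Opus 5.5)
The plan is to combine \prref{thm:gptor} and \prref{thm:gp_fsqr}, after checking that the hypotheses line up. Let $\cL$ be the class of finitely generated monoids that are torsion free and have the \fsqrtp. We must show that every finitely generated graph product $\M=\GP_{(J,I)}\set{M_i}{i\in J}$ with all $M_i\in\cL$ again lies in $\cL$.

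First, torsion freeness of $\M$ follows immediately from \prref{thm:gptor}, since every local monoid is torsion free. Finite generation is liftable by \prref{prop:lift1}.

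Second, we need the additional hypothesis $\sqrt[2]{1}=\os{1}$ required by \prref{thm:gp_fsqr}, and this is where torsion freeness is used. Suppose $y\in M_i$ satisfies $y^2=1$ and $y\neq 1$. Then $y^{0+2}=y^0$, so $y$ is a torsion element in the sense of the definition with $r=0$, $p=2$, contradicting that $M_i$ is torsion free. Hence $\sqrt[2]{1}=\os 1$ in each $M_i$. Therefore every $M_i$ belongs to the class considered in \prref{thm:gp_fsqr}, and that theorem yields the \fsqrtp for $\M$.

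Combining the two steps, $\M$ is torsion free and has the \fsqrtp, so the conjunction is liftable. There is no real obstacle. The only point needing care is the observation that torsion freeness implies $\sqrt[2]{1}=\os1$. This is exactly what rules out counterexamples such as $\Z/2\Z\ast\Z/2\Z$, which is why the \fsqrtp alone fails to be liftable while the conjunction succeeds.
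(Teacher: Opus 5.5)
Your proposal is correct and follows the paper's route: the paper obtains the corollary directly from \prref{thm:gptor} and \prref{thm:gp_fsqr}. Your explicit check that torsion freeness forces $\sqrt[2]{1}=\os{1}$ (take $r=0$, $p=2$ in the definition of a torsion element) is the step the paper leaves implicit, and it is exactly what is needed to apply \prref{thm:gp_fsqr}.
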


\section{Quadratic Equations}\label{sec:qwe}

As an application of the previous sections, we present a result on the solution sets of quadratic equations over groups with certain properties.
Suppose we are given a group $G$, and let $\cX = \cX^+ \cup \cX^-$ be a set of variables with a free involution $X \mapsto X^{-1}$ exchanging the elements of $\cX^+$ and $\cX^-$.
We denote by $F(\cX)$ the free group with basis $\cX^+$, and by $G[\cX] = G * F(\cX)$ the corresponding free product.
The set of variables $\cX$ is then viewed as a subset of $F(\cX)$.

An element~$W \in G[\cX]$ is called an \emph{equation} over $G$, and a finite subset~$\cS \subseteq G[\cX]$ is called a (finite) \emph{system of equations} over $G$.
By~$|\cS|_X = \sum_{W \in \cS}|W|_X$ we denote for~$X \in \cX$ the number of occurrences of $X \in \cX$ in the reduced words representing the equations $W \in \cS$.
The system~$\cS$ is called \emph{quadratic} provided that~$|\cS|_X + |\cS|_{X^{-1}} \leq 2$ for every~$X \in \cX$.

A \emph{solution} of such a system is given by a homomorphism~$\sigma: G[\cX] \to G$ with~$\sigma|_G = \id_G$ and~$\sigma(W) = 1$ for every equation~$W \in \cS$.
Sometimes it can be useful to additionally constrain the values of~$\sigma(X)$ for each $X \in \cX$ to some subset $L_X \sse G$, where $L_{X^{-1}} = (L_X)^{-1}$.
We call such a family~$L = (L_X)_{X \in \cX}$ of subsets a \emph{recognizable constraint} if each~$L_X\in \Rec(G)$, and we say that a solution $\sig$ satisfies this constraint if $\sig(X) \in L_X$ for all $X \in \cX$.
Equivalently, such a constraint can be given in the form of a homomorphism $\mu : G \to H$ onto a finite group~$H$ and subsets $\mu_X \sse H$, where $L_X = \oi\mu(\mu_X)$.
Accordingly, a \emph{solution} of $(\cS, \mu)$ is a solution $\sig$ of the system of equations $\cS$ which satisfies $\mu(\sig(X)) \in \mu_X$ for all $X \in \cX$.

Finally, we define the \emph{(full) solution set} of a system of equations with recognizable constraints~$(\cS, \mu)$ as the set of all possible images of variables under solutions, that is, 
\[
  \Sol(\cS, \mu) = \set{\sigma(X)}{\text{$\sigma$ is a solution of~$(\cS, \mu)$ and $X \in \cX$}} \sse G. 
\]
The following theorem is our main result about infinite
\solu sets of quadratic systems of equations over groups and the exponent of periodicity. 

\begin{thm}\label{thm:snN}
  Let $G$ be a finitely generated torsion-free group with the \fsqrtp, and $\nf : G \to \Gam^*$ an admissible normal form.
  Then, for every finite quadratic system of equations with recognizable constraints~$(\cS, \mu)$, its solution set satisfies
  \[
    \abs{\Sol(\cS, \mu)} = \infty \text{ \IFF\ } \exp(\nf(\Sol(\cS, \mu))) = \infty.
  \]
\end{thm}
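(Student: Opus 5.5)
The direction ``$\Leftarrow$'' is immediate: if $\Sol(\cS,\mu)$ is finite, then $\nf(\Sol(\cS,\mu))$ is a finite set of words, so its exponent of periodicity is finite. For ``$\Rightarrow$'' the plan is to show that an infinite solution set always contains an infinite family of values of the shape $u p^{n} v$ with $p \neq 1$, where $n$ runs through an arithmetic progression. Given such a family, write $N=|H|$, where $\mu\colon G\to H$ is the constraint homomorphism, and put $p' = p^{N}$. The set $u p'^* v$ is infinite, because $G$ is torsion free and multiplication is injective in a group. Admissibility of $\nf$ then gives $\exp(\nf(up'^*v)) = \infty$, hence $\exp(\nf(\Sol(\cS,\mu)))=\infty$. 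The exponent $N$ is chosen so that the constraints survive: $\mu(p^{N})=1$, so $\mu(u p^{kN} v)=\mu(uv)$ for all $k$.

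The basic tool is a ``Dehn twist'' that exploits quadraticity. Suppose a variable $X$ occurs in $\cS$ in the pattern $\cdots X A X^{-1}\cdots$, where $A$ is a subword not containing $X$. Let $\sigma$ be a solution and put $a=\sigma(A)$. Define $\sigma_k$ by $\sigma_k(X)=\sigma(X)a^{kN}$ and $\sigma_k(Y)=\sigma(Y)$ for $Y\neq X^{\pm1}$. Since $X$ occurs exactly twice, the only affected part of $\sigma(W)$ is $\sigma(X)\,a^{kN} a\, a^{-kN}\sigma(X)^{-1}=\sigma(X A X^{-1})$, so every $\sigma_k$ is again a solution, and it still satisfies the constraints. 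If $a\neq 1$, this yields the required family with $u=\sigma(X)$, $p=a$ and $v=1$.

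To reach such patterns in general, I will first bring the system to a standard form by the usual Nielsen-type variable substitutions for quadratic equations. These substitutions are automorphisms of $F(\cX)$ that fix $G$, and they induce bijections between the solution sets of the old and new systems. The recognizable constraints are carried along by the finite group $H$, for example by guessing $\mu$-values of the new variables and splitting into finitely many constrained systems. Under these substitutions each original variable is expressed as a word $w_1 X' w_2$, where the remaining variables occurring in $w_1,w_2$ are not twisted. A twisted family $X'\mapsto x a^{kN}$ therefore gives original values $\sigma(w_1)\,x\,a^{kN}\,\sigma(w_2)$, which again has the shape $u p^{n} v$. I will also dispose of variables occurring only once: such a variable is determined by the others through the equation, so it can be eliminated, and I proceed by induction on the number of variables and on the genus of the standard form.

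The main obstacle is showing that an infinite solution set in the standard form really forces a \emph{nontrivial} twist. One must rule out situations where every available twist element $a=\sigma(A)$ is trivial, and one must handle the non-orientable parts, that is, products of squares $X_1^2\cdots X_k^2$. For $k=1$, an equation $X^2=g$ has only finitely many solutions by the \fsqrtp. For $k\ge 2$, I would rewrite $X_1^2X_2^2$ via the standard substitution into the shape $Y^2\,Z A Z^{-1}$, which produces a conjugation pattern to which the twist applies. The case analysis then runs over the standard forms. If every variable has only finitely many values, the solution set is finite. Otherwise, choose a variable with infinitely many values; in the standard form it occurs either in a commutator block or in a conjugation block or in a square. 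In the first two cases there is a twist whose element $a$ is nonzero for all but finitely many of the solutions. Torsion-freeness is used here to get $a\neq 1$ and to keep the resulting values $u p^{kN} v$ pairwise distinct. In the square case, finiteness of square roots reduces the problem to the remaining variables, where the induction hypothesis applies.
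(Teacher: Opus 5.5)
\textbf{There is a genuine gap: the step that transfers the twist from the standard form back to the original variables.} Your twist lemma is correct and is the paper's key step (its case~(5)). For a pattern $XAX^{-1}$ the values $\sigma(X)a^{kN}$ stay in $\Sol(\cS,\mu)$, and admissibility finishes. However, $\Sol(\cS,\mu)$ only records values of the \emph{original} variables. You assert that after the Nielsen substitutions each original variable becomes $w_1X'w_2$, with the twisted variable $X'$ occurring exactly once. Nothing in the standard-form reduction guarantees this, and for compositions of Nielsen moves it is false in general. For example, $Z_1\mapsto X'Y'X'^{-1}$, $Z_2\mapsto X'^2Y'X'^{-1}$ is a change of basis of $F(X',Y')$ (since $Z_2Z_1^{-1}=X'$), and no original variable contains $X'$ exactly once. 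Twisting $X'\mapsto xa^{kN}$ then gives original values $xa^{kN}ya^{-kN}x^{-1}$ and $xa^{kN}xa^{kN}ya^{-kN}x^{-1}$. Geometrically, the twist is a Dehn twist; transported back, it inserts $a^{\pm kN}$ at \emph{every} crossing of the twisting curve with an original variable. Admissibility only controls sets of the form $\nf(up^*v)$, so it says nothing about such families. The same problem arises when the twist element is trivial and $X'$ is a free new variable. As written, your argument therefore does not produce an infinite subset of $\Sol(\cS,\mu)$ of the form $up'^*v$.

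\textbf{The missing idea is to keep the twisted variable an original one, which makes standard forms unnecessary.} The paper first reduces the system. A variable shared by two equations is eliminated by solving for it and merging the equations. Independent equations are split off. A variable that does not occur, or occurs only once in a single equation, leaves some other variable $Y$ free in its infinite coset $L_Y=\mu^{-1}(\mu(Y))$. What remains is one equation in which every variable occurs exactly twice, and three cases arise.
\begin{enumerate}
\item Some $X$ occurs as $XUX^{-1}$. Twist $X$ directly. If $\sigma_0(U)=1$, nothing needs to be ruled out: $X$ is then free in $L_X$, and $\nf(L_X)$ has infinite exponent by the pumping lemma for rational sets. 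So your worry about trivial twist elements, and your unproved claim that $a\neq 1$ for almost all solutions, are unnecessary.
\item Two variables $X,Y$ both occur twice with exponent $+1$, say $W=XUYV_1\cdots$. The single substitution $X\mapsto X(UY)^{-1}$ fixes $Y$ and makes $Y$ occur once with each sign. The twist then acts on the original variable $Y$, and $\sigma(Y)=\sigma'(Y)$ has the required shape $up^nv$.
\item Only $W=XuXv$ remains, which has finitely many solutions by the finite square roots property.
\end{enumerate}
Your rewriting of $X_1^2X_2^2$ is the right kind of move, but it must be done so that the variable which ends up occurring with both signs is an original variable.
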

\begin{proof}
Let us call $(\cS, \mu)$ \emph{nice} if $|\Sol(\cS, \mu)| < \infty$ or $\exp(\nf(\Sol(\cS, \mu)))=\infty$; that is, if $\nf(\Sol(\cS, \mu))$ is a periodically nice subset of~$\Gamma^*$.
Thus, our goal is to prove that every quadratic system of equations with recognizable constraints is nice.

If $G$ is finite, then the assertion is trivial.
Therefore, we assume that $G$ is infinite and, hence, whenever we consider an \epi $\mu:G \to H$ onto a finite group~$H$, then $|\oi\mu(h)|=\infty$ for all $h \in H$.
Let $(\cS, \mu)$ be a system of quadratic equations with recognizable constraints where, without restriction, every $W \in \cS$ is a cyclically reduced word in~$G[\cX]$.

Without loss of generality, we also assume that $\abs{\Sol(\cS, \mu)} = \infty$.
Note that, whenever we write $\Sol(\cS, \mu)$ as a finite union $S_1 \cup \dots \cup S_n$, then at least one of the $S_i$ is infinite and it suffices to prove that $\exp(\nf(S_i)) = \infty$ for one such $S_i$.
Hence, it suffices to consider the case of recognizable constraints $\mu : G \to H$ where the set $\mu_X \sse H$ is a singleton for each $X \in \cX$.
Given this, we can extend $\mu$ to a homomorphism $\mu : G[\cX] \to H$ and then a solution $\sig$ of $\cS$ is a solution of $(\cS, \mu)$ \IFF $\mu(\sig(X)) = \mu(X)$ for all $X \in \cX$.

Let us also note that, for each variable $X \in \cX$, the set $L_X = \oi\mu(X) \cap G$ is a rational subset of $G$ (since $\Rec(G) \sse \Rat(G)$ as $G$ is finitely generated).
Hence, $\nf(L_X) \sse \Gam^*$ is periodically nice by \prref{lem:pumping_lemma}.
In particular, $\exp(\nf(L_X)) = \infty$ as $\abs{\nf(L_X)} = \infty$.

\medskip

Let us now fix some solution $\sig_0$ of $(\cS, \mu)$.
Our goal is to construct further solutions~$\sigma_n$ satisfying~$\exp(\nf(\sigma_n(X))) \geq n$ for some~$X \in \cX$.
We consider the following seven cases.

\begin{enumerate}
  \item Some variable does not occur, that is, we have~$|\cS|_X + |\cS|_{X^{-1}} = 0$ for some variable~$X \in \cX^+$.
  In this case we can freely change the value $\sig_0(X)$ to any other value $x \in L_X$.
  Thus, we obtain a solution $\sig_n$ of $(\cS, \mu)$ with the desired properties by simply setting $\sig_n(X) = x$ for some $x \in L_X$ with $\exp(\nf(x)) \geq n$ and $\sig_n(Y) = \sig_0(Y)$ for all $Y \in \cX^+$ with $Y \neq X$.

  \item Some variable occurs in separate equations, that is, $\abs{U}_X + \abs{U}_{\oi X} = \abs{V}_X + \abs{V}_{\oi X} = 1$ for some variable~$X \in \cX^+$ and distinct $U,V \in \cS$.
    Inverting and conjugating these equations if necessary, we can write $U = U'X$ and $V = \oi XV'$ for some $U', V' \in G[\cX \sm \os{X, \oi X}]$.
    Replacing $U$ and $V$ by the single equation $W = U'V'$ then allows us to eliminate $X$ from the system of equations.
    In this way, we obtain a new system $(\cS', \mu')$ containing fewer variables.
    Moreover, there is a one-to-one correspondence between solutions $\sig'$ of $(\cS', \mu)$ and solutions $\sig$ of $(\cS, \mu)$ via $\sig(X) = \sig'(U')$.
    Thus, $(\cS, \mu)$ is nice \IFF the new system $(\cS', \mu')$ is so, and we can assume this to be the case by induction on $\abs{\cX}$.

  \item There is an independent equation, meaning that, for some $U \in \cS$ and all $X \in \cX^+$, it holds that $\abs{U}_X + \abs{U}_{\oi X} \geq 1$ implies $\abs{V}_X + \abs{V}_{\oi X} = 0$ for all $V \in \cS$ with $U \neq V$.
    In this case, we can split $(\cS, \mu)$ into two systems $(\cS', \mu)$ and $(\cS'', \mu)$ with $\cS' = \os{U}$ and $\cS'' = \cS \sm \os{U}$, where $(\cS', \mu)$ is a system over the variables occurring (positively or negatively) in $U$ and the second system, $(\cS'', \mu)$, is a system over the remaining variables.
    By induction on~$\abs{\cX}$, we may once again assume that $(\cS', \mu)$ and $(\cS'', \mu)$ are nice.
     Due to the obvious correspondence between solutions of $(\cS, \mu)$ and pairs of solutions of $(\cS', \mu)$ and $(\cS'', \mu)$, we have $\Sol(\cS, \mu) = \Sol(\cS', \mu) \cup \Sol(\cS'', \mu)$.
     Hence, $(\cS, \mu)$ is also nice.
\end{enumerate}

By these first three cases, we may now assume that our system consists of a single quadratic equation~$W$ and that each variable~$X \in \cX$ occurs at least once in~$W$.

\begin{enumerate} \setcounter{enumi}{3}
  \item Some variable occurs only once, that is, we have $\abs{W}_X + \abs{W}_{\oi X} = 1$ for some $X \in \cX$.
  Similar to case (2), we may remove the variable $X$ and the (only remaining) equation~$W$ from the system, leaving us with the empty system of equations $(\emptyset, \mu)$ over $\cX \sm \os{X, \oi X}$.
  Thus, $\Sol(\cS, \mu)$ contains $\Sol(\emptyset, \mu) = \bigcup\set{L_Y}{Y \in \cX \sm \os{X, \oi X}}$ and, in particular, we therefore have $\exp(\nf(\Sol(\cS, \mu))) \geq \exp(\nf(L_Y)) = \infty$ using some $Y \in \cX \sm \os{X, \oi X}$.\footnote{At least one such variable $Y$ exists, for otherwise $\Sol(\cS, \mu)$ would be finite.}

  \item Some variable occurs both positively and negatively, meaning that $\abs{W}_X = \abs{W}_{\oi X} = 1$ for some $X \in \cX^+$.
    Thus, up to conjugation, $W = X U \oi X V$ with $U, V \in G[\cX\setminus\{X, \oi X\}]$.
  If~$\sigma_0(U) = 1$, then we choose an element~$x \in L_X$ satisfying~$\exp(\nf(x)) \geq n$ and obtain a solution~$\sigma_n$ by defining~$\sigma_n(X) = x$ and~$\sigma_n(Y) = \sigma_0(Y)$ for all $Y \in \cX^+$ with $Y \neq X$.

  In case~$\sigma_0(U) \neq 1$, let $k \geq 1$ be the order of $\mu(U)$ in the finite group $H$ and consider the automorphism $\tau : G[\cX] \to G[\cX]$ defined by $\tau(X) = XU^{k}$, leaving all $Y \in \cX^+$ with $Y \neq X$ and $G$ invariant.
  By construction, $\tau(W) = W$ and $\mu\tau = \mu$ so that precomposition with $\tau$ maps   solutions of $(\cS, \mu)$ to solutions of $(\cS, \mu)$.
  For the solution $\sig_N = \sig_0\tau^N$, we obtain the image $\sig_N(X) = \sig_0(X) \sig_0(U)^N \in G$, which is different for distinct values of $N$ as $G$ is torsion free and $\sig_0(U) \neq 1$. 
  Hence, $\exp(\nf(\Sol(\cS, \mu))) \geq \exp(\nf(\sig_0(X) \sig_0(U)^*)) = \infty$.

  \item There are two variables that each occur twice with the same exponent, meaning that we have $\abs{W}_X = \abs{W}_Y = 2$ for distinct $X, Y \in \cX$.
  Upon applying an automorphism defined by $Z \mapsto \oi Z$ for some or both $Z \in \{X, Y\}$ if necessary, we may assume that $X,Y \in \cX^+$.
  Up to conjugation, there are two possibilities for the equation:
  \[
    W = XUYV_1XV_2YV_3 \text{ or } W = XUYV_1YV_2XV_3 
  \]
  where $U,V_1,V_2,V_3 \in G[\cX \sm \os{X, \oi X, Y, \oi Y}]$.
  The automorphism $\tau: G[\cX] \to G[\cX]$ defined by $\tau(X) = X \oi{(UY)}$ yields $\abs{\tau(W)}_Y = \abs{\tau(W)}_{\oi Y} = 1$ in either case.
  Consider the system $(\cS', \mu')$ with $\cS' = \os{\tau(W)}$ and $\mu'$ given by $\mu'(X) = \mu(XUY)$ and $\mu'(Z) = \mu(Z)$ for all $Z \in \cX^+$ with $Z \neq X$.
  By the preceding case, $(\cS', \mu')$ is nice and, more specifically, it admits solutions $\sig'_n$ with $\exp(\nf(\sig'_n(Y))) \geq n$ for all $n \in \N$.
  But then we are done, since the mapping $\sig_n = \sig'_n \tau$ is a solution of $(\cS, \mu)$ with $\sig_n(Y) = \sig'_n(Y)$.

  \item There is only one variable and it occurs twice with the same exponent, that is, $W = X u X v$ with $\cX = \{X, \oi X\}$ and $u, v \in G$ after a suitable conjugation.
  In this case, every solution~$\sig$ of $(\cS, \mu)$ is determined by its image $x = \sig(X) \in G$, which must of course satisfy $xuxv = 1$ or, equivalently, $(xu)^2 = \oi v u$.
  Since $G$ has the \fsqrtp, we deduce that there are only finitely many solutions of $(\cS, \mu)$, contrary to our assumption.
\end{enumerate}

As the above list of cases is exhaustive, this concludes the proof.
\end{proof}

\begin{cor}\label{cor:ggpqwe}
Let $\G=\GP_{(J,I)}\set{G_i}{i\in J}$ be a finitely generated graph product of groups, where each $G_i$ has a normal form $\nf_i : G_i \to \Gam_i^*$, and suppose that the following hold.
\begin{enumerate}
\item For every $g \in G_i$ the set $\sqrt[2]{g} = \set{h \in G_i}{h^2 = g}$ is finite.
\item For all $u,p,v \in G_i$ with $p \neq 1$ we have $\exp(\nf_i(up^*v)) = \infty$.
\end{enumerate}
Let $\nf_\G : \G \to \Gam^*$ be the associated normal form as constructed in \prref{sec:nfM} (for graph products of monoids).
Then, for every quadratic system $(\cS, \mu)$ of equations with recognizable constraints over $\G$, either the full solution set $\Sol(\cS, \mu)$ is finite or $\exp(\nf_\G(\Sol(\cS, \mu))) = \infty$.
\end{cor}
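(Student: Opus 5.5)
The plan is to reduce the statement directly to \prref{thm:snN}, applied to the group $\G$ with the normal form $\nf_\G$. For this we need three facts about $\G$: it is finitely generated and torsion free, it has the \fsqrtp, and $\nf_\G$ is admissible. Finite generation is assumed in the statement (and is liftable by \prref{prop:lift1} anyway). The other two will come from the liftability results of \prref{sec:GPs}, once we check that the local groups $G_i$ meet their hypotheses.

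First we extract torsion freeness of each $G_i$ from hypothesis (2). If $p\in G_i$ with $p\neq 1$ had finite order, then $up^*v$ would be finite for all $u,v$. Hence $\nf_i(up^*v)$ would be a finite set of words, and so would have bounded exponent of periodicity, contradicting (2). So every $G_i$ is torsion free.

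In a torsion-free group, $h^2=1$ forces $h=1$, so $\sqrt[2]{1}=\os{1}$ in each $G_i$. Together with hypothesis (1), this places every $G_i$ in the class $\cL$ of \prref{thm:gp_fsqr}. Then \prref{cor:gp_fsqr} (that is, \prref{thm:gptor} together with \prref{thm:gp_fsqr}) gives that $\G$ is torsion free and has the \fsqrtp.

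Next, hypothesis (2) says exactly that each pair $(\pi_i,\nf_i)$ is admissible in the sense of \prref{def:expinfty}. If $p\neq 1$, then (2) gives $\exp(\nf_i(up^*v))=\infty$. If $p=1$, the set $up^*v=\os{uv}$ is a singleton and hence trivially periodically nice. Before invoking \prref{thm:gpair}, we must make sure the standing conventions of \prref{sec:nfM} hold, namely $1\notin\pi_i(\Gam_i)$ and \prref{conv:pinf} ($\nf_i(1)=1$). We arrange these by deleting letters mapping to $1$ and redefining $\nf_i(1)$ to be the empty word. Changing the normal form at a single element does not affect admissibility: it alters only one word in each set $\nf_i(up^*v)$, and a set is periodically nice if and only if it remains so after changing finitely many words. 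Deleting letters from $\Gam_i$ that map to $1$ does not change the normal forms either, since (as the paper notes) in concrete normal forms such letters do not occur. With these conventions in place, \prref{thm:gpair} yields that $(\pi,\nf_\G)$ is admissible.

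Finally, \prref{thm:snN} applies to the finitely generated torsion-free group $\G$, which has the \fsqrtp and the admissible normal form $\nf_\G$. It gives that for every quadratic system $(\cS,\mu)$ with recognizable constraints, $\Sol(\cS,\mu)$ is either finite or satisfies $\exp(\nf_\G(\Sol(\cS,\mu)))=\infty$.

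The only step requiring real care is the bookkeeping about conventions and hypotheses. In particular, we must notice that torsion freeness and $\sqrt[2]{1}=\os{1}$ are not listed among the hypotheses but are forced by (2). Without them, \prref{thm:gp_fsqr} would not apply, as the example $\Z/2\Z\ast\Z/2\Z$ shows.
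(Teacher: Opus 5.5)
Your proof is correct and follows the paper's argument. You derive torsion freeness of each $G_i$ from (2), use \prref{cor:gp_fsqr} to get torsion freeness and the \fsqrtp for $\G$, use \prref{thm:gpair} to get admissibility of $\nf_\G$, and then apply \prref{thm:snN}; your only slip is the claim that deleting letters $a$ with $\pi_i(a)=1$ leaves the normal forms unchanged. The paper's remark about concrete normal forms is not a hypothesis, and in general such a deletion can destroy admissibility, but the step is unnecessary: $\nf_\G$ in the statement is by definition built under the conventions of \prref{sec:nfM}, and such letters could anyway keep the color of their alphabet $\Gam_i$.
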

\begin{proof}
Item~$(2)$ implies that every nontrivial cyclic subgroup of a local group $G_i$ is infinite.
In other words, each $G_i$ is torsion free.
Since the local groups also have the \fsqrtp, we see that $\G$ is both torsion free and has the \fsqrtp by \prref{cor:gp_fsqr}.
The associated normal form $\nf_\G$ is admissible by \prref{thm:gpair}.
\end{proof}

It remains to show there are interesting torsion-free groups $G$ which can be 
written as a graph product $\smash{\GP_{(J,I)}\set{G_i}{i\in J}}$ so that we can apply \prref{cor:ggpqwe}.
This is done in the next section.
Our first example is, not surprisingly, the family of RAAGs.\footnote{Encore et encore un RAAG, cet obscur objet du désir comme l'avait dit Luis Bu{\~n}uel (1900 -- 1983)}

\section{Applications of \prref{cor:ggpqwe}}\label{sec:appl}

\subsection{Free Partially Commutative Groups}\label{sec:raag}

Our first application covers the finitely generated torsion-free free partially commutative groups; that is, the family of RAAGs (aka.\ graph groups). 
In this case, each local group is the infinite cyclic group $\Z$ (with its usual set of monoid generators and its usual normal form). 
\prref{cor:ggpqwe} yields the following.\footnote{Remarkably, it is open whether the corresponding assertion for trace monoids holds.}
\begin{cor}\label{cor:raag}
    Let~$\G = G(\Gam, I)$ be a finitely generated free partially commutative group, and let $(\cS, \mu)$ be a quadratic system of equations over~$\G$ with recognizable constraints.
Then either $\Sol(\cS, \mu)$ is finite or $\exp(\nfslex(\Sol(\cS, \mu)))=\infty$ for every short-lex normal form $\nfslex$.   
\end{cor}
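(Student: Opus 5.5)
The plan is to apply \prref{thm:snN} directly to $\G = G(\Gam, I)$ together with an arbitrary short-lex normal form $\nfslex:\G\to\Gam^*$, rather than going through the normal form $\nf_\G$ of \prref{cor:ggpqwe}. The reason for this choice is that the statement asks for \emph{every} short-lex normal form, and an arbitrary linear order on $\Gam$ need not be of the block form (colors first, then letters within a color) used in \prref{sec:nfM}. \prref{thm:snN} needs three hypotheses: $\G$ is finitely generated, torsion free, and has the \fsqrtp, and $\nfslex$ is admissible. Finite generation holds by assumption since $\Gam$ is finite. I read ``free partially commutative group'' here as in the surrounding subsection, i.e.\ the involution on $\Gam$ is without fixed points, so $\G$ is a RAAG. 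This is necessary: for right-angled Coxeter groups torsion obviously occurs.

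For admissibility I would simply invoke \prref{cor:graph_groups_perfect}. It states that every short-lex normal form of $G(\Sig,I)$ is perfectly admissible, and perfectly admissible pairs are admissible because $\Gam$ is finite. Since the corollary was stated without a separate argument, I would briefly justify it. The short-lex normal form of $g\in\G$ is the lexicographic normal form, with respect to the chosen order, of the unique reduced trace representing $g$; in such a trace no factor $a\bar a$ occurs. So $\nfslex(up^*v)$ is obtained exactly as in the last part of the proof of \prref{thm:gpair}. After the reduction steps in that proof, we may assume $p^2$ is reduced, and then \prref{lem:trlex_per} applies to the trace monoid $M(\Gam,I)$ with an arbitrary linear order on $\Gam$. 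This yields periodic perfection up to a finite set.

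For torsion freeness and the \fsqrtp I would identify $\G$ with a graph product of infinite cyclic groups. Choose one representative $a$ from each orbit $\os{a,\bar a}$; these representatives form the color set $J$, with $(a,b)\in I$ inherited from $\Gam$, which is well defined because $I$ is compatible with the involution. Comparing presentations then gives
\[
  \G \cong \GP\nolimits_{(J,I)}\set{\langle a\rangle\cong\Z}{a\in J}.
\]
The group $\Z$ is torsion free, has $\sqrt[2]{n}$ of size at most one for every $n$, and satisfies $\sqrt[2]{0}=\os 0$. Hence \prref{cor:gp_fsqr}, which rests on \prref{thm:gptor} and \prref{thm:gp_fsqr}, shows that $\G$ is torsion free and has the \fsqrtp.

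With these hypotheses verified, \prref{thm:snN} gives the dichotomy: either $\Sol(\cS,\mu)$ is finite or $\exp(\nfslex(\Sol(\cS,\mu)))=\infty$. The only step I expect to need care is the admissibility of an arbitrary short-lex order. Concretely, I must check that the reduction cases in the proof of \prref{thm:gpair} (where $p^2$ is not reduced) go through when the normal form is $\nflex$ of the reduced trace under a non-block order. I would handle this by noting that these cases only use the fact that the normal form of $ua^nq^nv$ splits into $a^{\pm n}$ and the normal form of $u'q^nv'$. This holds verbatim for any order, since the letter $a$ (or $\bar a$) occupies a single position in the reduced trace.
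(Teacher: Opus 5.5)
Your proof is correct and is essentially the paper's. The paper derives this corollary in one line from \prref{cor:ggpqwe} with every local group equal to $\Z$, and that corollary is exactly \prref{thm:snN} combined with \prref{cor:gp_fsqr} and \prref{thm:gpair}, which is what you unpack.

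Taking admissibility from \prref{cor:graph_groups_perfect} instead of from $\nf_\G$ is a justified refinement. The normal form $\nf_\G$ of \prref{sec:nfM} is the short-lex normal form only for orders in which the letters of each color form a block. For example, $a<b<\ov a<\ov b$ with $(a,b)\in I$ gives $\nfslex(ab)=ab$ but $\nfslex(\ov a b)=b\ov a$, which no block order reproduces. So the paper's one-line derivation literally covers only block orders.

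Two caveats concern only your optional re-derivation of admissibility.

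\emph{First caveat.} The splitting fact you attribute to the reduction cases is false as stated, even for block orders. Take the RAAG in which $b$ commutes with $a$ and $c$ but $a$ and $c$ do not commute, any order with $c<b<a$, and $u=1$, $p=ab$ (so $q=b$), $v=c$. Then $\nfslex(up^nv)=b^na^nc$. Deleting the block $a^n$ leaves $b^nc$, whereas $\nfslex(q^nv)=cb^n$. The fact is also unnecessary. The positions of the merged syllable all carry the same letter and form a chain with nothing in between, so for every order on $\Gam$ the greedy lexicographic choice outputs them consecutively. Hence $\nfslex(up^nv)$ contains a power of a single letter whose exponent grows linearly in $n$. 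Since $\abs{\nfslex(up^nv)}$ grows at most linearly, this alone makes $\nfslex(up^*v)$ periodically perfect in that case.

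\emph{Second caveat.} The step you copy from the end of the proof of \prref{thm:gpair} is that $u'p^nv'$ is reduced once $p^2$ is. This needs a preliminary move. Take $u=c$, $v=\ov c$ and $p=ab$, with $a,b$ not commuting and $c$ commuting with both. Then $p^2$ is reduced, but $c$ and $\ov c$ cancel across $p^n$ in $u'p^nv'$, so the lexicographic normal forms of these traces are not the short-lex normal forms of the group elements. The fix is to first move the largest suffix $u_2$ of $u$ with $(u_2,p)\in I$ to the right of $p^n$, i.e.\ into $v$. This rules out such cancellations, after which \prref{lem:trlex_per} applies as you say.
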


\subsection{Baumslag-Solitar Groups}\label{sec:BS}

The \BSG{s} constitute one of the most prominent families of groups and they have been widely studied. 
For any pair of integers~$(p,q)$ with $1\leq p \leq |q|$, the Baumslag-Solitar group $\BS(p,q)$ is defined  as the one-relator group 
\begin{align}\label{eq:BSG}
\BS(p,q) = \langle a,t \mid ta^p\oi t = a^q \rangle.
\end{align}

From this presentation, it is apparent that $\BS(p,q)$ is an HNN extension of $\Z$ with respect to the unique isomorphism $p\Z \to q\Z$ mapping $p$ to $q$.
As with all HNN extensions of a torsion-free group, the Baumslag-Solitar groups are torsion free.

We let $\Gam=\os{a,\oi a, t,\oi t}$ and $\pi:\Gam^*\to \BS(p,q)$ the canonical \epi which is induced by the inclusion $\Gam\sse \BS(p,q)$.
In this setting there exists a string-rewriting system (aka.\ semi-Thue system)~$S\sse \Gam\times \Gam$ for~$\BS(p,q)$ given by 
the following rules.\footnote{For the notation and results about string rewriting system we refer to the textbook \cite{bo93springer}. A terminating and confluent rewriting system is also called \emph{convergent}.}
\[
    \begin{array}{rcll}
 a^{kq + r}t&\to&a^rta^{kp} & \text{for $k \in \Z\setminus\{0\}, 0 \leq r < |q|$;}\\
 a^{kp + r}t^{-1}&\to&a^rt^{-1}a^{kq} & \text{for $k \in \Z\setminus\{0\}, 0 \leq r < |p|$;}\\
 tt^{-1}, t^{-1}t&\to&1\;. & 
    \end{array}
\]
The system $S$ is terminating and confluent, and  
as usual, the system~$S$ thereby defines a normal form, which we denote by~$\nf_S$,
such that $\nf_S(\BS(p,q))$ is the set of words where no rule of $S$ can be applied. 
There is another important rewriting system~$B\sse \Gam\times \Gam$ called Britton reduction.
It is given by the rules \[
    \begin{array}{rcll}
 ta^{kp}t^{-1}&\to&a^{kq} & \text{for $k \in \Z$;}\\
 t^{-1}a^{kq}t&\to&a^{kp} & \text{for $k \in \Z$.}\\
    \end{array}
\] 

The system~$B$ is not convergent, but 
it allows us to speak of cyclically Britton reduced words~--~that is words whose transpositions are all Britton reduced.
Moreover, it is not hard to see that every~$B$-rule can be realized by a sequence of~$S$-rules and, in particular, any normal form with regard to~$S$ is Britton reduced.
In general, every Britton reduced word~$w$ may be written as 
\[
    w = a^{n_1}t^{\eps_1}a^{n_2}\cdots t^{\eps_m}a^{n_{m+1}}
\] 
with~$\varepsilon_i \in \{\pm 1\}$.
The sequence~$\ts(w) = (\varepsilon_1, \dots, \varepsilon_m)$ is then called the~$t$-sequence of~$w$ and it can be shown that the~$t$-sequences for~$w$ and~$\nf_S(w)$ coincide.
\begin{prop} \label{prop:bsg_perfect}
    Let~$\BS(p, q)$ be a Baumslag-Solitar group and $u, w, v \in \BS(p, q)$ with $w \neq 1$. Then we have $\exp(\nf_S(uw^*v))=\infty$. In other words, the normal form~$\nf_S$ is perfectly admissible. 
\end{prop}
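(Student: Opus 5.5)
The plan is to reduce to a statement about the $t$-letters and the $a$-blocks of Britton reduced words. The useful fact is that a word $a^{N}$ inside a normal form already has exponent of periodicity $N$ with period $a$. So it suffices to show the following: every sufficiently long $\nf_S(uw^nv)$ either contains a long block $a^{\pm N}$, or contains a long repetition of a fixed segment. First, as in the proof of \prref{thm:gpair}, I would replace $w$ by a cyclically Britton reduced conjugate. Write $w=gw'\oi g$ with $w'$ cyclically Britton reduced. Then $uw^nv=(ug)w'^n(\oi g v)$, so we may assume $w$ is cyclically Britton reduced, absorbing $g$ into $u$ and $v$.

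\emph{Case 1: $w$ contains no $t$-letter,} so $w=a^k$ with $k\neq 0$. The $t$-sequence of $\nf_S(ua^{kn}v)$ equals that of a Britton reduced form of $ua^{kn}v$, and its length is bounded by the $t$-lengths of $u$ and $v$. The elements $ua^{kn}v$ are pairwise distinct, so their normal forms have unbounded length. A normal form has the shape $a^{n_1}t^{\eps_1}\cdots t^{\eps_m}a^{n_{m+1}}$ with $m$ bounded, so some $|n_i|$ must be large. Hence every long normal form has large exponent of periodicity, which is exactly periodic perfection in this case.

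\emph{Case 2: $w$ contains $t$-letters.} Since $w$ is cyclically Britton reduced, $w^n$ is Britton reduced. Because $u$ and $v$ are fixed, only a bounded number of Britton cancellations occur at the junctions. Thus $\nf_S(uw^nv)$ has $t$-sequence $\alpha\,\ts(w)^{n-c}\,\beta$ for fixed words $\alpha,\beta$ and a constant $c$. Next I would analyse how $S$ computes the normal form. The rules push $a$-powers rightwards across $t^{\pm1}$: a left remainder $a^r$ with $0\le r<|q|$ (resp.\ $|p|$) stays in front of the $t$-letter, and a \emph{carry} $a^{kp}$ (resp.\ $a^{kq}$) passes to the right. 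Entering each copy of $w$ there is an integer carry $c_j$. The segment of the normal form produced inside copy $j$, namely the remainders in front of its $t$-letters, is a function of $c_j$ alone, and so is the outgoing carry $c_{j+1}$. I would justify this by computing the normal form left to right, so the process is a deterministic transducer with the carry as its state.

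I would then split Case 2 by the behaviour of the carries. If $(c_j)_j$ is bounded along $uw^nv$, then only finitely many states occur. By the pigeonhole argument from the proof of \prref{lem:codes}, some state repeats after a bounded period. Hence the middle of $\nf_S(uw^nv)$ contains $s^{m}$ for a fixed nonempty segment $s$, with $m$ growing linearly in $n$; every $t$-letter of $w$ produces a letter, so $s\neq 1$. If instead the carries are unbounded, for example when the $t$-exponent sum of $w$ has the sign that multiplies the carry by $q/p$ or $p/q$ with absolute value greater than $1$, then some carry is absorbed as a large block. Either it becomes the trailing block $a^{n_{m+1}}$, or it is stopped in front of a $t$-letter of $v$. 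In both subcases the resulting large $a$-power gives a large exponent.

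The main obstacle is to make this dichotomy uniform in $n$: every long normal form must fall under one alternative with a quantitative bound. The danger is an intermediate regime in which carries are unbounded overall but every individual $a$-block stays short. There the remainders form a mixed-radix digit expansion that need not be periodic, as the $(p/q)$-expansions for $\BS(2,3)$ suggest. I would first try to exclude this regime by showing that a carry surviving a full copy of $w$ multiplies by the fixed factor $(q/p)^{e}$ or $(p/q)^{e}$, where $e$ is the $t$-exponent sum of $w$. If the factor has absolute value at most $1$, the carry stays bounded; otherwise it grows geometrically and must eventually surface in some $a$-block. I expect the delicate point to be the case $e=0$, where the factor is exactly $1$ and the carries must instead be shown to be eventually constant.
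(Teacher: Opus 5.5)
\textbf{Gap: the unbounded-carry case is not proved.} Your Case~2 setup is the paper's. Your carry $c_j$ is the paper's $k_j$, with $k_{j+1}=\beta(a^{k_j}w)$, and the segment produced by copy $j$ is $\alpha(a^{k_j}w)$; this follows from $\nf_S(xy)=\alpha(x)\nf_S(a^{\beta(x)}y)$ for Britton reduced $xy$. What you leave open is the decisive step. You do not exclude your ``intermediate regime'', and you do not show that for every large $n$ the word $\nf_S(uw^nv)$ contains a long power.

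Two of your intermediate claims are also wrong:
\begin{itemize}
\item A large carry can never be ``stopped in front of a $t$-letter of $v$''. In an $S$-normal form every $a$-block in front of $t$ (resp.\ $t^{-1}$) has exponent in $[0,|q|)$ (resp.\ $[0,p)$). So the only unbounded block is the trailing one.
\item The growth-factor heuristic fails. In $\BS(2,3)$ the word $w=tat^{-1}a^4$ is cyclically Britton reduced with $t$-exponent sum $e=0$. Yet $\beta(a^kw)=3\lfloor k/3\rfloor+4$, so starting from $k_0=0$ the carries are $3j+1$ for $j\geq 1$. They diverge rather than becoming eventually constant, and ``factor of absolute value at most $1$ keeps the carry bounded'' is false.
\end{itemize}

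\textbf{The missing idea.} No growth analysis is needed. The carries are the orbit of $k_0$ under the single fixed map $f(k)=\beta(a^kw)$ on $\Z$.
\begin{itemize}
\item If some value repeats, determinism makes the orbit eventually periodic. This is your bounded case, and the nonempty words $\alpha(a^{k_j}w)$ become periodic.
\item If no value repeats, each integer occurs at most once, hence $|k_n|\to\infty$. There is no intermediate regime.
\end{itemize}
In the second case, $\alpha(a^kv)$ takes only finitely many values: its $t$-sequence is that of $v$, and its blocks are bounded remainders. Since $k\mapsto a^kv$ is injective, it follows that $|\beta(a^kv)|\to\infty$ as $|k|\to\infty$. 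The trailing block $a^{\beta(a^{k_n}v)}$ then forces $\exp(\nf_S(uw^nv))\to\infty$, whatever the remainder digits do. You need one long power, not periodic digits. This is exactly the paper's argument.

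\textbf{Your Case~1 is worth keeping.} The paper absorbs $w^{n_0}$ into $u$ and $v$ so that both junctions are Britton reduced, and then concludes that the whole product is Britton reduced. That inference breaks when $w$ is conjugate to a power of $a$. For example, take $u=t$, $w=a$, $v=t^{-1}$ in $\BS(1,2)$. Both junctions become Britton reduced, but $ta^{n_0+n}t^{-1}a^{2n_0}$ is not. Your count of $t$-letters handles exactly this case.
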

\begin{proof}
First note, that the normal form of~$x \in \BS(p,q)$ can be uniquely written as
\[
\nf_S(x)  =  a^{r_1}t^{\eps_1}a^{r_2}\cdots a^{r_m}t^{\eps_m}a^{\beta(x)}
\]
    with~$\beta(x) \in \Z$ and such that~$\eps_i = -1$ implies~$0 \leq r_i < p$, and~$\eps_i = 1$ implies~$0 \leq r_i \leq |q|$.
Having this, we can also define $\alp(x)$ by 
\[
\alpha(x)= \nf_S(x) a^{-\beta(x)}
\]
    The definition of~$\alpha$ and~$\beta$ is motivated by the following observation:
    Given elements~$u, v \in \Gam^*$ such that the product~$uv$ is Britton reduced, we observe the following relationship.
\begin{align}\label{eq:nfab}
\nf_S(uv) = \alpha(u)\nf_S(a^{\beta(u)}v)
\end{align}
We start with the situation where~$\nf_S(u)\nf_S(w)^n\nf_S(v) \in \Gam^*$ is Britton reduced for every~$n \in \N$.
    Thus, the previous observation applies. 
    Letting~$k_0 = \beta(u)$ and~$k_{i+1} = \beta(a^{k_i}w)$ for $0\leq i <n$, we inductively obtain \[
\nf_S(uw^nv) = \alpha(u)\alpha(a^{k_0}w)\alpha(a^{k_1}w) \cdots \alpha(a^{k_{n-1}}w) \alpha(a^{k_n}v)a^{\beta(a^{k_n}v)}
    \]
    Thus, if the sequence numbers~$k_i$ becomes periodic, so does the sequence of words~$\alpha(a^{k_i}w)$ and we obtain the desired claim.
    Otherwise, we must have~$\lim_{n \to \infty}|k_n| = \infty$.
    In particular, this also implies~$\lim_{n \to \infty}|\beta(a^{k_n}v)| = \infty$ and hence the claim follows again.

    We still have to deal with the case that~$\nf_S(u)\nf_S(w)^n\nf_S(v) \in \Gam^*$ is not Britton reduced. In this case we proceed as follows.
    First, we find some conjugate~$\tilde{w} \in \BS(p, q)$ of~$w = x^{-1}\tilde{w}x$ such that~$\nf_S(\tilde{w})$ is cyclically Britton reduced; then, in particular,~$\nf_S(\tilde{w})^n$ is cyclically Britton reduced for all~$n \in \N$.
    We also set~$\tilde{u} = ux^{-1}$ and~$\tilde{v} = xv$ and thus~$uw^*v = \tilde{u}\tilde{w}^*\tilde{v}$. 
    Finally, by choosing~$n_0 \in \N$ sufficiently large, we obtain~$\tilde{u}' = \tilde{u}\tilde{w}^{n_0}$ and~$\tilde{v}' = \tilde{w}^{n_0}\tilde{v}$ such that both~$\nf_S(\tilde{u}')\nf_S(\tilde{w})$ and $\nf_S(\tilde{w})\nf_S(\tilde{v}')$ are Britton reduced.
    
    We thus find that~$\nf_S(\tilde{u}')\nf_S(\tilde{w})^n\nf_S(\tilde{v}')$ is Britton reduced thus the claim holds for the language~$\nf_S(\tilde{u}'\tilde{w}^*\tilde{v}')$ by the previous considerations.
 Now, the set~$\nf_S(uw^*v)$ is the union of this language with a finite set (which depends on $n_0$), so we are done. 
\end{proof}
\begin{rem}
    A similar proof strategy can be employed to proof an analogous statement for arbitrary graphs of groups, which admit a similarly constructed normal forms.
    However, we need a very strong assumption on the normal forms~$\nf_v$ of the vertex groups~$G_v$:
    The set~$\nf_v(G_v)$ must be periodically perfect for every vertex~$v \in V$.
    As expected, this is the case for the usual normal form of the infinite cyclic group $\Z$.
\hspace*{\fill}$\diamond$
\end{rem}

We are therefore only left to deal with the finite square roots property.
\begin{thm}\label{thm:bsgfsq}
    Let~$\BS(p, q)$ be the Baumslag-Solitar group with $1\leq p \leq |q|$.
    Then~$\BS(p, q)$ has the \fsqrtp if and only if the following two assertions hold:
    \begin{enumerate}
\item The sum of the parameters is nonzero: we have $p+q\neq 0$.
\item Either $p=1$ or $p$ and $q$ are odd (or both). 
    \end{enumerate}
    Moreover, if the \fsqrtp holds, then square roots are unique and, in particular,~$\sqrt[2]{1} = \os{1}$.
\end{thm}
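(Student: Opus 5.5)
The plan has two parts: explicit infinite families of square roots when either condition fails, and uniqueness of square roots (which gives finiteness) when both hold. For uniqueness we use the Bass--Serre tree of the HNN extension $\BS(p,q)$, i.e.\ Britton's lemma with cyclically Britton reduced words.

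\emph{Necessity.} Suppose first $p+q=0$, so $q=-p$. Then $t a^p t^{-1} = a^{-p}$, and for every $k\in\Z$
\[
(a^k t)^2 = a^k t a^k t .
\]
Choose $k\in p\Z$. Then $t a^{k} = a^{-k}t$, so $(a^k t)^2 = a^k a^{-k} t^2 = t^2$. Hence $\sqrt[2]{t^2}\supseteq\set{a^{k}t}{k\in p\Z}$, and this set is infinite because Britton's lemma shows the elements are pairwise distinct.

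Suppose next $p\ge 2$ and $p$ or $q$ is even. The natural candidates are elements $x = a^i t$ or $x = a^i t^{-1}$ with $x^2 = a^i t a^i t$. We want to exploit a reduction inside $ta^it$ when $i$ is not a multiple of $p$ but $2i$ is, or a similar parity trick. For instance, if $p$ is even, take $i=p/2$ and conjugate by powers of $a^p$: the elements $a^{mp} x a^{-mp'}$ are candidates. The precise family will be found by computing $x^2$ in $\nf_S$-normal form and looking for a one-parameter family of $x$ with a common square; I expect something like $x_m = a^{mq} (a^{i} t) a^{-mp}$-type conjugates that fix $x^2$. These are the elements centralizing $x^2$ but not $x$. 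The goal is to show that the centralizer of $g=x^2$ is strictly larger than $\langle x\rangle$ up to finite index, and that its elements $c$ give infinitely many distinct roots $cxc^{-1}$. Since $cxc^{-1}$ is a square root of $cgc^{-1}=g$, this suffices once we show $cxc^{-1}\ne x$ for infinitely many $c$.

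\emph{Sufficiency and uniqueness.} Assume (1) and (2), and suppose $x^2=y^2=g$. Conjugating, we may assume $x$ is cyclically Britton reduced. There are two cases.

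\emph{Elliptic case:} $x$ is conjugate into $\langle a\rangle$. Then $g$ is elliptic, and $y$ must be elliptic too, since in the Bass--Serre tree a hyperbolic $y$ has hyperbolic $y^2$. So $y$ fixes a vertex; write $x=a^m$ and $y=h a^{n} h^{-1}$. The relation $a^{2m} = h a^{2n} h^{-1}$ is then analysed via the path from the base vertex to $h$ in the tree. Along this path the exponent is multiplied by $q/p$ or $p/q$ at each edge, subject to divisibility. The statement $a^m=h a^n h^{-1}$ then requires the same divisibility for $m$ instead of $2m$. This holds exactly when dividing by $2$ never breaks divisibility by $p$ or $q$, which is where the parity condition (2) enters: either $p=1$, or $p,q$ odd so that $2\nmid p,q$. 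The condition $p\ne -q$ excludes the sign flip, which would otherwise allow $ha^nh^{-1}=a^{-n}$ with $a^{2n}$ fixed.

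\emph{Hyperbolic case:} both $x$ and $y$ are hyperbolic with the same axis as $g$. Then $y=xz$ with $z$ fixing the axis pointwise, so $z$ lies in the kernel of the action on the axis. For $p\ne |q|$ this kernel is $\{1\}$, or at least $z$ commutes with $x$ up to the modular homomorphism, and $x^2=(xz)^2$ forces $z=1$ by torsion-freeness. For $p=|q|$ we have $q=p$, since $q=-p$ is excluded. The kernel is then $\langle a^p\rangle$, which is central, and $z$ central with $z^2=1$ gives $z=1$.

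The main obstacle I expect is the elliptic case: carefully tracking divisibility along tree paths, and producing the explicit infinite families in the necessity direction when $p$ is even or $q$ is even.
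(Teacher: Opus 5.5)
The genuine gap is the necessity of condition~(2). When $p\ge 2$ and $pq$ is even, you never exhibit an element with infinitely many square roots, and the candidates you suggest cannot produce one. The word $ta^it$ admits no Britton reduction, since pinches have the form $ta^{kp}t^{-1}$ or $t^{-1}a^{kq}t$; so $a^it$ is hyperbolic. Your family $a^{mq}(a^it)a^{-mp}$ also collapses to $a^it$, because $ta^{-mp}=a^{-mq}t$. More fundamentally, the hyperbolic case of your sufficiency argument (in the corrected form below) never uses parity. So as soon as $p+q\neq 0$, every hyperbolic element has at most one square root, and the witnesses must be elliptic. Your centralizer mechanism is the right one, but it should be applied to $g=a^p$. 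For $p=2r$, the element $u=at^{-1}at$ commutes with $a^p$ (since $ta^pt^{-1}=a^q$ and $t^{-1}a^qt=a^p$). Hence $x_n=u^na^ru^{-n}$ satisfies $x_n^2=a^p$ for all $n$. Because $1\le r<p\le|q|$, these words are Britton reduced with pairwise distinct $t$-sequences, so they are pairwise distinct. This is the paper's construction. For $q$ even (with $p\ge 2$), use $a^{q/2}$ and $atat^{-1}$ in the same way.

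For sufficiency, your elliptic/hyperbolic dichotomy on the Bass--Serre tree is a legitimate alternative to the paper's three cases. Those cases are: $\Z[1/q]\rtimes\Z$ for $p=1$; an embedding of $\BS(p,p)$ into $(\Z*\Z/p\Z)\times\Z$; and a Britton-reduction argument for odd $p\neq|q|$. However, two of your steps are wrong as written.

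\emph{Elliptic case.} The criterion ``dividing by $2$ never breaks divisibility by $p$ or $q$'' does justify the induction along the path from $v_0$ to $hv_0$ when $p$ and $q$ are odd. It fails for $p=1$ and $q$ even: in $\BS(1,2)$ the element $a^2$ fixes $tv_0$ while $a$ does not. What rescues $p=1$ is that each vertex has only one edge along which the local exponent is multiplied by $q$ unconditionally. So a geodesic, once it crosses an edge in the dividing direction, keeps dividing, and integrality of the final halved exponent $n$ propagates back along the path. Alternatively, simply use $\Z[1/q]\rtimes\Z$. Your sign-flip remark is also misplaced here: elliptic elements have unique square roots even in $\BS(1,-1)$, and $p+q\neq 0$ matters only for hyperbolic elements.

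\emph{Hyperbolic case.} The pointwise stabilizer of an axis need not be trivial when $p\neq|q|$. In $\BS(2,3)$, the element $a^3=ta^2t^{-1}$ commutes with the hyperbolic element $tat^{-1}a$ and fixes its axis. The step you need is as follows. From $x^2=(xz)^2$ you get $x^{-1}zx=z^{-1}$. Since $x$ is cyclically reduced, $v_0$ lies on the axis, so you may write $z=a^k$. Then $x^{-1}zx=a^{k(p/q)^E}$, where $E$ is the $t$-exponent sum of $x$. Hence $k(p/q)^E=-k$, which forces $k=0$ unless $q=-p$. This is exactly the paper's equation $a^ku=ua^{-k}$, and it also disposes of $p=q$ without the separate central-kernel argument.
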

\begin{proof}
We make a case distinction according to the parameters~$1\leq p \leq |q|$. 
  We start with three cases where~$\BS(p, q)$ has the \fsqrtp.
\begin{enumerate}
\item \textbf{We have~$p=1$ and~$q \neq -1$.}
In this case,~$\BS(1, q)$ is isomorphic to the semi-direct product~$\mathbb{Z}[1/q] \rtimes_\alpha \mathbb{Z}$ where the group action of~$\mathbb{Z}$ on~$\mathbb{Z}[1/q]$ is given by~$\alpha_k(x) = q^kx$.
Thus, we have~$(x, k)^2 = (y, m)$ if and only if~$k = m/2$ and~$x + q^kx = y$.
  Since~$1 + q^k \neq 0$, we get a unique solution~$x = y/(1 + q^k) \in \Q$,  and therefore at most one solution in~$\mathbb{Z}[1/q]$.

\item \textbf{We have~$p = q$ and $p$ is odd.}
In this case, there are well-defined \epis \[
    \begin{array}{ccc}
     \varphi: \BS(p, p) \to \mathbb{Z} * \mathbb{Z}/p\mathbb{Z} = \langle t \rangle * \langle b \rangle: t \mapsto t, a \mapsto b&&
     \psi: \BS(p, p) \to \mathbb{Z}: t \mapsto 0, a \mapsto 1 
    \end{array}
\] Of course, these induce a canonical morphism~$\iota: \BS(p, p) \to G$, with~$\varphi$ and~$\psi$ as its components, into~$G = (\Z * \Z/p\Z) \times \Z$.
Now, both $\Z$ and $\mathbb{Z}/p\mathbb{Z}$ have the \fsqrtp. Since~$p$ is odd, we find also that~$\sqrt[2]{0} = \{0\}$ in~$\Z/p\Z$. Thus, the graph product~$G$ has the \fsqrtp by~\prref{thm:gp_fsqr}. Moreover it is easy to see that they are in fact unique.
Consequently, we are done if we can show that~$\iota$ is injective.
To this end, let~$w \in \BS(p, p)$ with~$\iota(w) = 1$ and determine its normal form \[
    \nf_S(w) = a^{k_1}t^{\eps_1}a^{k_2}t^{\eps_2} \cdots a^{k_n}t^{\eps_n}a^{k_{n+1} + pz}
\] for some~$n \in \mathbb{N}$ with~$\eps_i \in \{\pm 1\}$ and~$0 \leq k_i < p$ for~$1 \leq i \leq n + 1$ as well as~$z \in \Z$.
Now~$\varphi(w) = 1$ implies~$n = 0$ and thus~$\psi(w) = k_{n+1} + pz$.
Finally,~$\psi(w) = 0$ implies~$k_{n+1} = 0$ and~$z = 0$ and thus~$\nf_S(w) = 1$ and~$w = 1$.

\item \textbf{We have $p \neq |q|$ and both parameters are odd.}
Let~$w \in \BS(p, q)$; since conjugated elements have the same number of square roots, we may assume that~$\nf_S(w) \in \Gam^*$ is a cyclically Britton reduced word.
Now let~$x \in \sqrt[2]{w}$ be given as a Britton-reduced word.
 We claim that~$x$ is cyclically Britton-reduced as well. To see this, we write~$x \in \Gam^*$ as an alternating product \[
    x = a^{k_0}x_1a^{k_1}x_2a^{k_2}\cdots x_na^{k_n}
\] where each word~$x_i \in \Gam^*$ is Britton reduced of the form \[
    x_i = t^{\eps_i}a^{k_{i, 1}}t^{\eps_i}a^{k_{i, 2}}t^{\eps_i} \cdots t^{\eps_i}a^{k_{i, n_i}}t^{\eps_i}
\] with~$n_i > 0$ and~$\eps_i + \eps_{i+1} = 0$ for~$1 \leq i \leq n$.
Thus, we decompose~$x$ into blocks~$x_i$, whose~$t$-sequence alternate between~$\pm(1, \dots, 1)$.

       Suppose that some reduction occurs in the product~$x^2$.
Then it must apply at the boundary factor~$t^{\eps_n}a^{k_n + k_0}t^{\eps_1}$.
In particular, we have~$\eps_1 \neq \eps_n$ and thus~$n \geq 2$ is even.
Moreover, letting~$p_{-1} = p$ and~$p_{1} = q$, we must have~$p_{\eps_1} \mid k_n + k_0$. Now, suppose~$x_na^{k_n + k_0}x_1$ reduces to some Britton reduced word~$u \in \Gam^*$ with nonempty~$t$-sequence~$\ts(u)$.
Then \[
    x^2 = a^{k_0}x_1a^{k_1}\cdots x_{n-1}a^{k_{n-1}}ua^{k_1}x_2a^{k_2}\cdots x_{n}a^{k_{n}}
\] would not be cyclically Britton reduced.
But then~$\nf_S(x^2) = \nf_S(w)$ would not be cyclically Britton reduced either; a contradiction.
Thus~$u = a^{r_1}$ for some~$r_1 \in p_{-\eps_1}\Z = p_{\eps_2}\Z$.
Moreover, with the same argument, further Britton reductions must be applicable to the newly created boundary~$t^{\eps_{n-1}}a^{k_{n-1} + r_1 + a_{k_0}}t^{\eps_2}$. 
This again implies~$p_{\eps_2} \mid k_{n-1} + r_1 + k_1$ and, by the construction of~$r_1 \in p_{\eps_2}\Z$ already~$p_{\eps_2} \mid k_{n-1} + k_1$.

Continuing this way, we thus get necessary conditions~$p_{\eps_{i+1}} \mid k_{n-i} + k_i$ for~$0 \leq i \leq n$.
Now observe what happens for~$l = n/2$: We must have~$p_{\eps_{l + 1}} \mid k_{n-l} + k_l = 2k_l$.
Since~$p_{\eps_{l+1}}$ is odd by assumption, this implies~$p_{\eps_{l+1}} \mid k_l$.
But then the Britton reduced word~$x$ would have a factor~$t^{\eps_{l}}a^{k_l}t^{\eps_{l+1}}$; a contradiction.
Hence,~$x$ must be cyclically Britton reduced.

We may thus assume that~$\nf_S(x) \in \Gam^*$ is cyclically Britton reduced.
Reusing the notation of the proof of~\prref{prop:bsg_perfect}, we can now write \begin{align*}
    \nf_S(x)  = \alpha(x)a^{\beta(x)} =  a^{r_1}t^{\eps_1}a^{r_2}\cdots a^{r_m}t^{\eps_m}a^{\beta(x)}
\end{align*} and we obtain~$\nf_S(w) = \nf_S(x^2) = \alpha(x)\nf_S(a^{\beta(x)}x)$.
In particular, the word~$u = \alpha(x)$ is a uniquely determined prefix of~$\nf_S(w)$ and thus~$\alpha(y) = u$ for every~$y \in \sqrt[2]{w}$.
Moreover, from~$\nf_S(a^{\beta(y)}y) = \nf_S(a^{\beta(x)}x)$ we obtain the equation \[
    a^{\beta(y)}ua^{\beta(y)} = a^{\beta(y)}y = a^{\beta(x)}x = a^{\beta(x)}ua^{\beta(x)}
\] in~$\BS(p, q)$. With~$k = \beta(y) - \beta(x)$ this is equivalent to the equation~$a^ku = ua^{-k}$.
Now letting~$\ts(u) = (\eps_1, \dots, \eps_m)$ and~$s=p/q$, we get the condition \[
    -k = s^{\eps_m} \cdots s^{\eps_2}s^{\eps_1}k = s^E k \text{ with } E = \eps_1 + \cdots \eps_m 
\] and thus~$k = 0$, since~$|s| \neq 1$ by assumption.
Of course, this means~$\beta(y) = \beta(x)$ and finally~$|\sqrt[2]{w}| \leq 1$ for all~$w \in \BS(p, q)$.
\end{enumerate}

Additionally, observe that the proofs immediately yield the uniqueness 
of square roots. 
Therefore, we are left with two remaining cases and, for each of them, we construct 
an element $g\in \BS(p,q)$ such that $|\sqrt[2]{g}| = \infty$.

\begin{enumerate}\setcounter{enumi}{3}
\item \textbf{The parameters satisfy $1\leq p$ and~$q = -p$.}
For $n\in \N$  consider the normal form $x_n = ta^{pn}$. Then~$x_n^2 = ta^{rn}ta^{rn} = t^2$ and hence we obtain~$|\sqrt[2]{t^2}| = \infty$. 

\item \textbf{We have $1<p\leq |q|$ such that the product $pq$ is even.}
We consider the case~$1<p\in 2\N$, only. The case $1<|q|\in 2\N$ follows along the same lines. 
For $p=2r$ we define~$u = at^{-1}at$.
    Then~$x_n = u^na^ru^{-n}$ is Britton reduced since~$1\leq r<p\leq |q|$. Clearly~$x_0^2= a^p$, and 
    a straight-forward induction shows for all $n\in \N$ the following equation. 
    \[
x_{n+1}^2 = u^{n}at^{-1}ata^pt^{-1}a^{-1}ta^{-1}u^{-n} = u^{n}at^{-1}a^qta^{-1}u^{-n} = u^na^pu^{-n} = x_n^2 = a^p
    \]
Since every~$x_n$ is Britton reduced but $x_m$ and $x_n$ have distinct~$t$-sequences for $m\neq n$, the elements  $x_m$ and $x_n$ represent different elements in~$\BS(p, q)$ for $m\neq n$. Since $x_n^2 = a^p$ for all $n\in \N$ we obtain~$|\sqrt[2]{a^p}| = \infty$.
\end{enumerate}

As the above list of cases is exhaustive, this concludes the proof of the theorem.
\end{proof}

\subsection{Strongly Polycyclic and Nilpotent Groups}\label{sec:WIP}

A group $G$ is \emph{strongly polycyclic} if it possesses a subnormal series with infinite cyclic quotients.
In more detail, this means that there exist $1 = G_0 \trianglelefteq G_1 \trianglelefteq \cdots \trianglelefteq G_n = G$ such that $G_i / G_{i-1} \cong \Z$ for all $1 \leq i \leq n$.
Such groups are finitely generated, solvable, and torsion free. 

Choosing elements $g_i \in G_i$ for all $1 \leq i \leq n$ such that $g_i$ generates $G_i / G_{i-1}$, we obtain a finite generating set $\Gam = \os{g_1, \ov g_1, \dotsc, g_n, \ov g_n}$ with $\bar g=\oi g$ in $G$ and, in particular, an epimorphism $\pi : \Gam^* \to G$.
With respect to such a (strongly) \emph{polycyclic generating sequence} $g_1, \dotsc, g_n \in G$, every $g \in G$ can then be uniquely written as a product
\begin{equation}\label{eqn:polycyclic}
  g = g_1^{k_1} \cdot \ldots \cdot g_n^{k_n} \text{ with } k_1, \dotsc, k_n \in \mathbb{Z}.
\end{equation}

Reading the expression on right-hand side of \eqref{eqn:polycyclic} as a (reduced) word over $\Gam$ yields a mapping $\nf \colon G \to \Gam^\ast$, which assigns to each $g \in G$ its \emph{polycylic normal form} $\nf(g) \in \Gam^\ast$.
Collectively, these normal forms constitute the regular language
\[
  \nf(G) = (g_1^\ast \cup \bar g_1^\ast) \cdot \ldots \cdot (g_n^\ast \cup \bar g_n^\ast) \sse \Gam^\ast.
\]

\begin{lem}\label{lem:poly_perfect}
  The set of all polycyclic normal forms $\nf(G)$ is periodically perfect.
\end{lem}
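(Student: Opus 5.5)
The language $\nf(G) = (g_1^\ast \cup \bar g_1^\ast) \cdots (g_n^\ast \cup \bar g_n^\ast)$ is regular. I would prove that it is periodically perfect in one of two ways: directly from its shape, or via \prref{thm:perper}. The direct route is the cleanest, and I would present it first.

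Fix $n_0 \in \N$ and put $N = n \cdot n_0$. Let $w \in \nf(G)$ with $\abs{w} \geq N$. Then $w = h_1 \cdots h_n$ where each block $h_i$ is either $g_i^{\abs{k_i}}$ or $\bar g_i^{\abs{k_i}}$. The total length is $\sum_i \abs{k_i} \geq n \cdot n_0$. By the pigeonhole principle some index $i$ satisfies $\abs{k_i} \geq n_0$. The block $h_i$ is then a factor of $w$ of the form $c^{\abs{k_i}}$ for a single letter $c \in \os{g_i, \bar g_i}$. Hence $\exp(w) \geq \abs{k_i} \geq n_0$. This is exactly the defining condition for periodic perfectness, with $N$ depending only on $n_0$ and on the length $n$ of the polycyclic generating sequence.

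As a cross-check, or as an alternative presentation, I would also verify condition~(1) of \prref{thm:perper}. Build the obvious trim deterministic automaton. Its states record the index $i$ of the current block together with the sign already chosen for that block. The only loops are the self-loops labeled $g_i$ or $\bar g_i$ at the state "in block $i$ with sign $\pm$". Every other transition strictly increases the block index, or fixes a sign that cannot change within the block, so no other cycles exist. Each state therefore has at most one simple cycle attached, and \prref{thm:perper} yields the claim.

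One mild point needs care. The letters $g_i$ and $\bar g_i$ are formally distinct letters of $\Gam$ even if $g_i = \oi g_i$ could coincide in $G$, which it cannot since $G$ is torsion free. The alphabet is therefore well defined and the blocks are unary words. I expect no real obstacle here: the statement is about the regular language itself, not yet about sets of the form $\nf(up^*v)$. The harder work, showing that the polycyclic normal form is (perfectly) admissible, will come afterwards. It will combine this lemma with \prref{lem:nice_perfect}, since any subset of a periodically perfect language is periodically nice.
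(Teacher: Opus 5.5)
Your direct pigeonhole argument is correct and is exactly the paper's proof: each $w\in\nf(G)$ is a concatenation of at most $n$ unary blocks, so some block $g_i^{m}$ or $\bar g_i^{m}$ has $m\geq\abs{w}/n$, which gives $\exp(w)\to\infty$ as $\abs{w}\to\infty$. The automaton cross-check via \prref{thm:perper} is also valid but unnecessary. Your closing remark overstates what remains: since $\nf(up^*v)\sse\nf(G)$, perfect admissibility follows immediately from \prref{lem:nice_perfect}, with no further work.
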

\begin{proof}
  Using the above notation, every $w \in \nf(G)$ has a factor $g_i^m$ or $\bar g_i^m$ with $m \geq \abs{w} / n$ and, therefore, we have that $\exp(w) \to \infty$ as $\abs{w} \to \infty$.
\end{proof}  

It follows that any equation in $G$ has infinitely many solutions \IFF it has solutions with arbitrarily large exponent of periodicity with respect to the pair $(\pi,\nf)$.

In general, the set of square roots $\sqrt[2]{g} = \set{h \in G_i}{h^2 = g}$ need not be finite for an element $g$ of a strongly polycyclic group.
For example the \BSG $\BS(1,-1)$ is strongly polycyclic (it is the semi-direct product $\Z \rtimes \Z$ where the outer copy of $\Z$ acts by negation) and does not have the \fsqrtp by \prref{thm:bsgfsq}.
However, for an important class of strongly polycyclic groups, an even stronger property is known to hold:
\begin{prop}\label{lem:poly}
Let~$G$ be a finitely generated torsion-free nilpotent group.
Then:
\begin{enumerate}
  \item The group $G$ is strongly polycyclic.
  \item The group $G$ has unique roots, meaning that $g^n = h^n$ for $g,h \in G$ and $n \neq 0$ implies $g = h$.
\end{enumerate}
\end{prop}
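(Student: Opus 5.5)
Both parts are classical facts about nilpotent groups, and the plan is to prove each by induction on the nilpotency class $c$ of $G$, using the upper central series $1 = Z_0 \trianglelefteq Z_1 \trianglelefteq \cdots \trianglelefteq Z_c = G$.

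For (1), the key input is that every subgroup of a finitely generated nilpotent group is finitely generated. Hence each quotient $Z_{i}/Z_{i-1}$ is a finitely generated abelian group. The one point needing care is that these factors need not be torsion free. I would fix this by using the isolators of the terms of the series, or equivalently a result of Mal'cev: if $G$ is torsion free, then $G/Z(G)$ is torsion free. I would prove this auxiliary claim itself by induction on the class. Suppose $g^n \in Z(G)$ with $n \neq 0$, and fix $h \in G$. Then $(h^{-1}gh)^n = g^n$, so it suffices to know that $n$-th roots are unique, which is exactly part (2) for $G$. I would therefore prove (2) first, or interleave the two statements in a single induction.

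Granting this, $Z_{i}/Z_{i-1} = Z(G/Z_{i-1})$ is a finitely generated torsion-free abelian group, so $Z_i/Z_{i-1} \cong \Z^{r_i}$. Refining each such factor through a chain of subgroups with successive quotients $\Z$ gives a subnormal series with infinite cyclic factors. Each refined term is normal in the next, because it contains $Z_{i-1}$ and the quotient $Z_i/Z_{i-1}$ is abelian.

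For (2), I would induct on the class. The class-one case is a torsion-free abelian group, where $g^n = h^n$ gives $(gh^{-1})^n = 1$ and hence $g = h$. For the inductive step, suppose $g^n = h^n$. Consider $H = \langle g, [g,h] , \ldots\rangle$, more precisely the subgroup $H = \langle g, g^h \rangle$. This subgroup is nilpotent. Moreover $g^h$ and $g$ have the same $n$-th power: $(g^h)^n = (g^n)^h = (h^n)^h = h^n = g^n$.

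The standard trick is as follows. The subgroup $\langle g, h^{-1}gh\rangle$ lies in $\langle g\rangle[G,G]$, which has smaller class in a suitable sense; more exactly, $K=\langle g\rangle\gamma_2(G)$ has class less than $c$ when $c \geq 2$. By induction, $K$ has unique roots, so $g = h^{-1}gh$. Hence $g$ and $h$ commute, and then $(gh^{-1})^n = 1$ gives $g = h$ by torsion-freeness.

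The main obstacle is justifying that $\langle g\rangle\gamma_2(G)$ has class strictly less than $c$. This is the standard lemma that a subgroup generated by $\gamma_2(G)$ and one element has smaller class. I would prove it by a commutator calculus showing $\gamma_{c}(K) \subseteq \gamma_{c+1}(G) = 1$. The argument uses that $[\gamma_2 G, \gamma_j G] \subseteq \gamma_{j+2}G$ and that commutators of $g$ with itself are trivial. Alternatively, since the statement is classical, I would cite it from the literature rather than reprove it.
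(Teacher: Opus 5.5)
Your proof is correct, but it runs in the opposite direction from the paper's. The paper takes as input the fact, cited from Lennox and Robinson, that $G/Z(G)$ is again torsion free. It then proves both items by induction on the class through the quotient $G/Z(G)$. For (1) it places the free abelian center under a strongly polycyclic quotient. For (2) it obtains $gh^{-1}\in Z(G)$ from the inductive hypothesis applied to $G/Z(G)$, and then uses torsion-freeness once. So the paper's proof of (2) also relies on the cited quotient fact.

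You instead prove unique roots first, inducting through the subgroup $K=\langle g\rangle\gamma_2(G)$ of class at most $c-1$ rather than through a quotient. You then derive torsion-freeness of $G/Z(G)$ as a corollary of unique roots, and iterating on the torsion-free nilpotent groups $G/Z_{i-1}$ gives it for every upper central factor. Part (1) then follows by refining the upper central series, which is the paper's argument in substance.

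Your route is self-contained: it proves the very fact the paper cites, and your proof of (2) needs neither finite generation nor any information about quotients. (Consequently, the phrase ``by induction on the class'' for the auxiliary claim is unnecessary; it follows from (2) in one line.) The price is the class estimate for $K$, which is easier than your sketch suggests. Modulo $\gamma_3(G)$, the subgroup $K$ is generated by $g$ together with central elements, so $\gamma_2(K)\subseteq\gamma_3(G)$. Then $\gamma_{i+1}(K)=[\gamma_i(K),K]\subseteq[\gamma_{i+1}(G),G]=\gamma_{i+2}(G)$ gives $\gamma_c(K)\subseteq\gamma_{c+1}(G)=1$ directly. No inclusion of the form $[\gamma_i(G),\gamma_j(G)]\subseteq\gamma_{i+j}(G)$ is needed.
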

\begin{proof}
  The first item follows from the center $Z(G)$ being a finitely generated \cite[1.2.16]{LennoxRobinson} torsion-free Abelian group (hence, strongly polycyclic), and the quotient $G / Z(G)$ being a finitely generated torsion-free \cite[2.3.9]{LennoxRobinson} nilpotent group of smaller nilpotency class.

  The second item follows from a similar induction: If $g^n = h^n$, then $gh^{-1} \in Z(G)$ by induction and, thus, $(gh^{-1})^n = g^nh^{-n} = 1$.
  Since $G$ is torsion free, this shows $g = h$.
\end{proof}

Nevertheless, by a result of Roman'kov~\cite[Cor.~3.2]{RomankovNilpotent2016}, it is in general undecidable for a torsion-free nilpotent group of nilpotent of class two and higher whether an input word in a generating set represents a commutator in the group.
Since `being a commutator' is clearly expressible as a quadratic equation, the satisfiability of such is also undecidable. 

\begin{rem}\label{rem:nilploy}
  As a final remark in this subsection let us note that the class of strongly polycyclic groups with unique roots is strictly larger than the class of finitely generated torsion-free nilpotent groups.
  To see this, consider the semi-direct product $(\Z\times \Z)\rtimes \Z$ where the outer group $\Z$ acts on $\Z\times \Z$ via multiplication with the matrix $\vdmatrix2111$.
\hspace*{\fill}$\diamond$
\end{rem}

\subsection{Hyperbolic Groups}\label{sec:hype}

Finally, let us consider another important class of finitely generated groups: the hyperbolic groups in the sense of Gromov~\cite{gro87}. 
Ol'shanskii~\cite{Olsh92} has shown that \emph{almost every group is hyperbolic}
in the few relator model for random groups.\footnote{In this model a random group is almost surely torsion free and 
centralizers are cyclic.}   

Roughly speaking, a hyperbolic group is a finitely generated group which, when viewed as a metric space with respect to the word metric, is negatively curved at large scales.
More precisely, $G$ is hyperbolic if there is some constant $\del$ and a finite generating set $\Gam=\oi \Gam\sse G$ such that every geodesic triangle in the corresponding Cayley graph is $\del$-thin. 
For precise definitions, we refer to Bridson and Haefliger's book~\cite{BridsonHaefliger1999}.

Free groups are hyperbolic, but $\Z^n$ for $n \geq 2$ is not.
In fact, a hyperbolic group cannot contain any \BSG as a subgroup.
This is a consequence of the following standard result on element centralizers in hyperbolic groups, which we state here for torsion-free hyperbolic groups only; for the general statement, we refer to \cite[Pro.~III.$\Gam$.3.9]{BridsonHaefliger1999}.
\begin{lem}\label{lem:hyper_ctrlzr}
  Let $G$ be a torsion-free hyperbolic group.
  For every $g \in G$ with $g \neq 1$, the centralizer $C_G(g)$ is a quasi-convex subgroup of $G$ and quasi-isometrically isomorphic to $\mathbb{Z}$.
\end{lem}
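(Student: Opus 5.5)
The lemma is the standard fact that in a torsion-free hyperbolic group centralizers of nontrivial elements are virtually cyclic, hence infinite cyclic. My plan is to derive it from the general statement \cite[Pro.~III.$\Gam$.3.9]{BridsonHaefliger1999}, which asserts that for any element $g$ of infinite order in a hyperbolic group $G$, the centralizer $C_G(g)$ is quasi-convex and contains $\langle g\rangle$ as a subgroup of finite index. Since $G$ is torsion free, every $g\neq 1$ has infinite order, so the general result applies to all nontrivial $g$.

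The steps, in order, are as follows.

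\emph{Quasi-convexity and virtual cyclicity.} I invoke the cited proposition to obtain that $C = C_G(g)$ is quasi-convex and that $[C:\langle g\rangle] < \infty$. Quasi-convexity also gives that $C$ is finitely generated and that the inclusion $C \hookrightarrow G$ is a quasi-isometric embedding.

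\emph{$C$ is infinite cyclic.} Here $C$ is a torsion-free virtually infinite cyclic group. I would use the classical fact that such a group is isomorphic to $\Z$. To see this, note that a virtually $\Z$ group has a finite normal subgroup $K$ with quotient $\Z$ or $\Z/2\Z \ast \Z/2\Z$. Torsion freeness forces $K=1$ and excludes the infinite dihedral quotient, since that group contains elements of order two and its preimage would then contain torsion. Alternatively, one can argue via Stallings' theorem on ends, or directly: $C$ is torsion free and has a finite-index subgroup $\langle g\rangle$, so $C$ has cohomological dimension one, and by Stallings--Swan it is free; a free group that is virtually $\Z$ is $\Z$.

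\emph{Quasi-isometry to $\Z$.} With $C \cong \Z$ and $C$ quasi-convex in $G$, the word metric on $C$ (with respect to a generator) is quasi-isometric to $\Z$. The conclusion ``quasi-isometrically isomorphic to $\Z$'' then follows from the isomorphism together with the quasi-isometric embedding.

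The main obstacle is the cited proposition itself, specifically the finite-index claim. That is the genuinely geometric part: one shows that elements of $C_G(g)$ move a quasi-axis of $g$ within bounded distance, so that, modulo $\langle g\rangle$, only finitely many elements of $C_G(g)$ have bounded displacement. Since the paper permits citing it, I would not reprove it. The only step I must supply carefully is the passage from ``virtually $\Z$ and torsion free'' to ``$\cong \Z$''.
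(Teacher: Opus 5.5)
Your proposal is correct and takes the same route as the paper, which gives no argument beyond specializing Bridson--Haefliger's Proposition~III.$\Gamma$.3.9 to the torsion-free case. The one step the paper leaves implicit is that a torsion-free virtually infinite cyclic centralizer is isomorphic to $\Z$, and your argument fills it correctly, either via the finite normal subgroup with quotient $\Z$ or the infinite dihedral group, or via Stallings--Swan.
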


An immediate consequence of \prref{lem:hyper_ctrlzr} is the following well-known fact, which can also be proved by a reduction to free groups using so-called canonical representatives.
\begin{lem}\label{lem:hyper_roots}
  Every torsion-free hyperbolic group has unique roots.
\end{lem}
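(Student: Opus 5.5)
Suppose $g,h \in G$ satisfy $g^n = h^n$ for some $n \neq 0$; we need $g = h$. Replacing $n$ by $-n$ if necessary, we may assume $n \geq 1$. If $g = 1$, then $h^n = 1$, so $h = 1$ because $G$ is torsion free; the case $h=1$ is symmetric. So assume $g \neq 1 \neq h$, and set $z = g^n = h^n$, which is nontrivial by torsion freeness. The plan is to show that $g$ and $h$ lie in one infinite cyclic subgroup, where roots are obviously unique.

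First, both $g$ and $h$ commute with $z$, since $z$ is a power of each. Hence $g,h \in C := C_G(z)$. By \prref{lem:hyper_ctrlzr}, $C$ is quasi-isometric to $\Z$. We want to conclude that $C$ is infinite cyclic. A finitely generated group quasi-isometric to $\Z$ has two ends. Two-ended groups are exactly the virtually infinite cyclic groups, by Stallings/Hopf. A torsion-free virtually cyclic group is itself infinite cyclic; this is a standard fact, for instance via a finite normal subgroup with quotient $\Z$ or $D_\infty$, where torsion freeness kills the finite kernel and excludes $D_\infty$. If one prefers to avoid the ends machinery, the reference \cite[Pro.~III.$\Gam$.3.9]{BridsonHaefliger1999} already gives that $C_G(z)$ is virtually cyclic for $z$ of infinite order, and the torsion-free argument is then the same. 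Finite generation of $C$ comes from quasi-convexity.

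Thus $C = \langle c \rangle \cong \Z$. Write $g = c^a$ and $h = c^b$. Then $c^{na} = c^{nb}$, so $na = nb$ since $c$ has infinite order. As $n \neq 0$, this gives $a = b$, hence $g = h$.

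The main obstacle is justifying that the centralizer is infinite cyclic rather than merely quasi-isometric to $\Z$. In the paper's spirit I would simply cite the structure result from \cite{BridsonHaefliger1999} together with the fact that torsion-free virtually $\Z$ groups are $\Z$. Everything else is immediate.
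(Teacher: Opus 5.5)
Your proof is correct and takes the paper's route. The paper gives no argument beyond declaring the lemma an immediate consequence of Lemma~\ref{lem:hyper_ctrlzr}, mentioning canonical representatives only as an alternative. You supply the missing details: $g,h \in C_G(g^n)$, this centralizer is virtually infinite cyclic and therefore infinite cyclic because $G$ is torsion free, and $n$-th roots are unique in $\Z$.
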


It follows from \cite{Epstein} and \cite{HoltReesJA2003} that hyperbolic groups are biautomatic with respect to the full language of geodesics.
If, as in our situation, one requires unique representatives of group elements, a canonical choice is to then use short-lex normal forms instead.
The following proposition shows that these normal forms are indeed well-behaved.

\begin{prop}\label{pro:hyper}
  Let $G$ be a torsion-free hyperbolic group with generating set $\Gam = \oi \Gam \sse G$, and let $\nf_\Gam : G \to \Gam^*$ be the associated short-lex normal forms.
  Then the following hold.
  \begin{enumerate}
  \item The set of all normal forms $\nf_\Gam(G) \sse \Gam^*$ is a regular language.
  \item The normal forms $\nf_\Gam$ are admissible in the sense of \prref{def:expinfty}.
  \end{enumerate}
\end{prop}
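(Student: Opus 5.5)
Item~(1) is standard: hyperbolic groups are (bi)automatic, and the short-lex normal forms with respect to any finite generating set form the language of a short-lex automatic structure (Epstein et al.\ \cite{Epstein}). For item~(2) I will not derive anything from item~(1) directly. Instead, I will use the geometry of quasi-geodesics together with the regularity in item~(1), and apply Lemma~\ref{lem:pumping_lemma}-style pumping to the short-lex language.

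Fix $u,p,v\in G$ with $p\neq 1$; then $up^*v$ is infinite since $G$ is torsion free, and I must show $\exp(\nf_\Gam(up^*v))=\infty$. First I replace $p$ by a conjugate $\tilde p = x^{-1}px$ of minimal length in its conjugacy class, absorbing $x$ into $u$ and $v$ as in the proof of Proposition~\ref{prop:bsg_perfect}. Since $G$ is torsion free, hence $p$ has infinite order, the map $n\mapsto \tilde p^n$ is a quasi-isometric embedding of $\Z$: infinite-order elements in hyperbolic groups have positive stable translation length, and $C_G(p)$ is quasi-isometric to $\Z$ by Lemma~\ref{lem:hyper_ctrlzr}. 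Consequently the geodesic words $w_n=\nf_\Gam(u\tilde p^n v)$ fellow-travel, by the Morse lemma, the quasi-geodesic path $u\,\tilde p^n v$ within a uniform distance $R$. Moreover $|w_n|\to\infty$ linearly in $n$.

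The key step is to extract a periodic factor. Consider the long middle portion of $w_n$. For each vertex $w_n(i)$ on it there is some $k$ with $d(w_n(i), u\tilde p^k)\le R$, so the ``offset'' $g_i=(u\tilde p^k)^{-1}w_n(i)$ lies in the finite ball $B(R)$. I combine this offset with the state of a fixed DFA for the regular language $\nf_\Gam(G)$ (item~(1)) and with the state of a finite automaton recognizing the fellow-traveller relation for right multiplication by $\tilde p$ (which exists by automaticity). The pigeonhole principle then gives many indices $i_0<i_1<\dots<i_m$ with identical data, and with the corresponding $k$'s in arithmetic progression of difference $d\ge 1$. I then argue that the segments of $w_n$ between consecutive such indices are all equal. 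Each segment is a short-lex normal form of the same group element $g^{-1}\tilde p^{d}g$, because a factor of a short-lex word is short-lex and the endpoints have equal offsets. Normal forms are unique, so the segments coincide, and $w_n$ contains a factor $q^m$ with $q=\nf_\Gam(g^{-1}\tilde p^d g)\neq 1$. Since $m$ grows with $n$ (the middle part has length $\Theta(n)$ while the data set is bounded), we obtain $\exp(\nf_\Gam(up^*v))=\infty$.

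The main obstacle is making the arithmetic-progression condition rigorous, namely ensuring that equal offsets occur at positions whose $k$-values differ by a common $d$. I would try first to use only the offsets: pick $m+1$ indices with the same offset $g$ and increasing $k_j$. The segments are then normal forms of $g^{-1}\tilde p^{k_{j+1}-k_j}g$. Because the differences $k_{j+1}-k_j$ are bounded (consecutive vertices move $k$ by a bounded amount, and offsets repeat within a bounded window), a second pigeonhole on the differences yields a run of equal differences. Alternatively, if this gets messy, I would fall back on the quasi-convexity of $C_G(p)$ from Lemma~\ref{lem:hyper_ctrlzr}.
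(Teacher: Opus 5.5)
\textbf{There is a genuine gap, at exactly the step you flag.} Item~(1) is fine. So is your geometric setup: uniform quasi-geodesics $\nf_\Gam(u)\nf_\Gam(\tilde p)^n\nf_\Gam(v)$, the Morse lemma, and offsets in the finite ball $B(R)$. The problem is the arithmetic progression.

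Pigeonhole gives you many positions of $w_n$ with the same offset (and the same automaton data). It does not give positions whose $k$-values form an arithmetic progression, and your proposed repair does not work:
\begin{itemize}
\item A fixed offset $g$ need not recur in bounded gaps. Pigeonhole only says that \emph{some} offset repeats in every window of $|B(R)|+1$ consecutive values of $k$, and which one may change from window to window.
\item Even if the gaps $k_{j+1}-k_j$ between consecutive occurrences of $g$ are bounded, a second pigeonhole only yields many equal gaps, not a run of consecutive equal gaps. The sequence of gaps is a word over a finite alphabet and may well be square-free.
\item Passing to a sub-selection of occurrences is precisely the problem of finding a monochromatic arithmetic progression in a finite colouring, i.e.\ van der Waerden's theorem.
\end{itemize}
Without such a progression, the consecutive segments are $\nf_\Gam(g^{-1}\tilde p^{d_j}g)$ with varying $d_j$, and no power appears. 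The fallback to quasi-convexity of $C_G(p)$ does not touch this combinatorial point.

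\textbf{Two ways to repair it.}

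(a) Apply van der Waerden's theorem to the colouring $k\mapsto g(k)$, restricted to a progression $k_*+K_0\N$ on which the chosen positions are strictly increasing. Such a $K_0$ exists by coarse monotonicity, which follows from $k\mapsto \tilde p^k$ being a quasi-isometric embedding together with the Morse lemma. Your segment argument then goes through verbatim. It even shows that $\nf_\Gam(up^*v)$ is periodically perfect, which is more than admissibility.

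(b) More simply, use the DFA state you already put into your data; you then need only \emph{two} indices.
\begin{itemize}
\item Take $k<k'$ from that progression carrying the same offset $g$ and the same state of a DFA for $\nf_\Gam(G)$. Pigeonhole among $|B(R)|\cdot|Q|+1$ values suffices.
\item Write $w_n=xsy$, where $x$ and $xs$ are the prefixes ending at the two positions. Then $xs^jy$ is accepted by the DFA, so it lies in $\nf_\Gam(G)$.
\item The word $xs^jy$ represents $u\tilde p^{\,n+(j-1)(k'-k)}v$. By uniqueness of normal forms it is the normal form of that element, and it contains $s^j$ with $s$ a nonempty word.
\end{itemize}
The sign $k'-k>0$ matters: otherwise you pump into negative exponents, outside $up^*v$. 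The fellow-traveller automaton is unnecessary.

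\textbf{Comparison with the paper.} Route~(b) differs from the paper's proof. The paper shows that $\nf_\Gam(up^*v)$ is itself regular, using the theorem of Carvalho and Silva that all geodesic representatives of the elements represented by a regular language of uniform quasi-geodesics form a regular language. It then intersects with $\nf_\Gam(G)$ and applies the ordinary pumping lemma. Your corrected argument replaces that external theorem with an elementary Morse-lemma-plus-pigeonhole pumping.
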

\begin{proof}
  For the first item see, \cite{HoltReesJA2003}.
  For the second item, let $u,p,v \in G$ with $p \neq 1$.
  We need to show that $\exp(\nf_\Gam(up^* v)) = \infty$.
  To this extent, let us consider the set 
  \[
    L = \set{\nf_\Gam(u) (\nf_\Gam(p))^n \nf_\Gam(v)}{n \in \N} \sse \Gam^*.
  \]
  Clearly, $L \sse \Gam^\ast$ is a regular language.
  Moreover, by \prref{lem:hyper_ctrlzr} and biautomaticity, each of the words $\nf_\Gam(u) (\nf_\Gam(p))^n \nf_\Gam(v) \in L$ is a $(\lambda, \varepsilon)$-quasi-geodesic representative of $up^nv \in G$ for some constants $\lambda > 0$ and $\varepsilon > 0$ independent of $n$.
  Using a recent result by Carvalho and Silva \cite[Thm.~3.8]{CarvalhoSilvaJA2026}, this implies that the set of \emph{all} geodesic representatives of the $up^nv$, 
  \[
    L' = \set{w \in \Gam^\ast}{w \text{ is a geodesic and } \pi(w) \in \pi(L) = up^*v} \sse \Gam^*,
  \]
  is a regular language.
  Hence, so is language $\nf_\Gam(up^*v) = L' \cap \nf_\Gam(G)$.
  Since this language is infinite, the pumping lemma for regular languages implies that $\exp(\nf_\Gam(up^*v)) = \infty$.
\end{proof}

\section*{Acknowledgements}
The authors thank Andr{\'e} Carvalho, Olga Kharlampovich, Paliath Narendran, and the anonymous referee for helpful discussions and comments.

\bibliographystyle{abbrv}
\bibliography{references}
\end{document}